\allowdisplaybreaks \numberwithin{equation}{section}
\numberwithin{equation}{section}
\newtheorem{theorem}{Theorem}[section]
\newtheorem{lemma}[theorem]{Lemma}
\theoremstyle{definition}
\theoremstyle{remark}
\thanks{EC has been partially supported by FAPESP, grant 2021/10769-6. LCFF has been supported by FAPESP and CNPq, BR}}
\begin{document}
\title[Asymmetric vortex patch for gSQG ]{Existence of asymmetric vortex
patch for the generalized SQG equations}
\author{Edison Cuba \ \ and \ \ Lucas C. F. Ferreira}
\subjclass[2020]{35Q35, 76B03, 76B47}
\keywords{gSQG equations; Vortex patch; Existence; Singular data; Asymmetry}

\begin{abstract}
This paper aims to study the existence of asymmetric solutions for the
two-dimensional generalized surface quasi-geostrophic (gSQG) equations of simply
connected patches for $\alpha\in[1,2)$ in the whole plane, where $\alpha=1$
corresponds to the surface quasi-geostrophic equations (SQG). More precisely, we
construct non-trivial simply connected co-rotating and traveling patches
with unequal vorticity magnitudes. The proof is carried out by means of a
combination of a desingularization argument with the implicit function
theorem on the linearization of contour dynamics equation. Our results
extend recent ones in the range $\alpha\in[0,1)$ by Hassainia-Hmidi (DCDS-A, 2021) and Hassainia-Wheeler (SIAM J. Math. Anal., 2022) to more singular velocities, filling an open gap in the range of $\alpha$.

\end{abstract}

\maketitle


\section{Introduction}

In this paper we consider a family of active scalars in two dimensions which
are driven by an incompressible flow. More specifically, we prove the
existence of non-trivial solutions of co-rotating and traveling asymmetric
vortex patch pairs with unequal vorticity magnitudes of simply connected
patches for the generalized surface quasi-geostrophic (gSQG) equations in
the whole plane $\mathbb{R}^{2}$.

Quasi-geostrophic equations appear as a simplification of some general
geophysical fluid models, under special conditions on vorticity and buoyancy
frequency (see, e.g., \cite{Constantin1994},\cite{Pedlosky-1987}), and have
been used in the modeling of hot and cold air fronts. In this context, the
gSQG equations appear as a generalization of the so-called surface
quasi-geostrophic (SQG) equations. They are nonlocal transport equations
describing the evolution of a potential temperature $\theta $. More
precisely, in the whole plane the gSQG equations read as
\begin{equation}
\begin{cases}
\partial _{t}\theta +\mathbf{v}\cdot \nabla \theta =0 & \text{in}\ \mathbb{R}%
^{2}\times (0,T), \\
\ \mathbf{v}=-\nabla ^{\perp }(-\Delta )^{-1+\frac{\alpha }{2}}\theta &
\text{in}\ \mathbb{R}^{2}\times (0,T), \\
\theta (x,0)=\theta _{0}(x) & \text{in}\ \mathbb{R}^{2}, \\
&
\end{cases}
\label{1-1}
\end{equation}%
where $T$ is an arbitrary time, the vector field $\mathbf{v}(x,t):%
\mathbb{R}^{2}\times (0,T)\rightarrow \mathbb{R}^{2}$ is the flow velocity
of the fluid particles, $\nabla ^{\perp }=(\partial _{2},-\partial _{1})$,
and $\alpha \in \lbrack 0,2)$. The velocity field $\mathbf{v}$ is related to
the potential temperature $\theta $ via the nonlocal operator $(-\Delta
)^{-1+\frac{\alpha }{2}}$ satisfying $\mathbf{v}=\nabla ^{\perp }\psi $ with
\begin{equation*}
\psi (x,t)=-(-\Delta )^{-1+\frac{\alpha }{2}}\theta (x)=\int_{\mathbb{R}%
^{2}}K^{\alpha }(x-y)\theta (y)dy,
\end{equation*}%
where $K^{\alpha }$ stands for the fundamental solution of the fractional
Laplacian $(-\Delta )^{-1+\frac{\alpha }{2}}$ in the whole plane $\mathbb{R}%
^{2}$, namely
\begin{equation*}
K^{\alpha }(x)=\left\{
\begin{array}{lll}
\frac{1}{2\pi }\ln \frac{1}{|x|},\ \  & \ \ \alpha =0, &  \\
\frac{C_{\alpha }}{2\pi }\frac{1}{|x|^{\alpha }},\ \  & \ \ \alpha \in (0,2),
&
\end{array}%
\right.
\end{equation*}%
with $C_{\alpha }=\frac{\Gamma (\alpha /2)}{2^{1-\alpha }\Gamma (\frac{%
2-\alpha }{2})}$ and $\Gamma (x)$ being the Euler gamma function. The case $%
\alpha =0$ corresponds to the 2D incompressible Euler equations and $\alpha
=1$ to the SQG equations. The full range $\alpha \in \lbrack 0,2)$ was
proposed in a combination of works, see C\'{o}rdoba \textit{et al.} \cite%
{cordova} and Chae \textit{et al.} \cite{chae}. In fact, model (\ref{1-1})
with $\alpha \in \lbrack 0,1]$ and $\alpha \in \lbrack 1,2)$ was initially
considered by \cite{cordova} and \cite{chae}, respectively. Here we are
interested in the case $\alpha \in \lbrack 1,2)$ which corresponds to more
singular velocities $\mathbf{v}$.

These aforementioned active scalar equations have attracted a lot of
attention in the last decades. In the sequel, without making a complete
list, we mention some important results on them. Local-in-time
well-posedness results for SQG and gSQG have been obtained in some
functional spaces such as Sobolev, Besov, Triebel-Lizorkin, among others
(see, e.g., \cite{Chae2},\cite{chae},\cite{Wang-1}, and their references).
For instance, there are results for gSQG equations in Sobolev spaces $H^{s}$
with $\alpha \in (0,2)$ and $s>1+\alpha $ and for SQG in Besov spaces $%
B_{p,q}^{s}$ with either $s>\frac{n}{p}+1$ and $q\in \lbrack 1,\infty ]$ or $%
s=\frac{n}{p}+1$ and $q=1$. In the case $\alpha =0,$ global well-posedness
was obtained by Yudovich \cite{Yud} with the initial data belonging to $%
L^{1}\cap L^{\infty }$. However, it is delicate to generalize this theory to
the full range $\alpha \in (0,2)$, since the velocity is more singular and
scales below the Lipschitz class. In this way the global well-posedness
remains open for the full $\alpha \in (0,2)$. On the other hand, global
existence of weak solutions in $L^{2}$ was obtained for SQG with $\alpha =1$
by Resnick \cite{resnick}. An extension of this result to the case $\alpha
\in (0,1)$ was obtained in \cite{chae,lazar}. Results concerning to the
strong ill-posedness were proved recently by C\'{o}rdoba and Mart\'{\i}%
nez-Zoroa \cite{cordoba2}. More precisely, the authors constructed solutions
to the gSQG in $\mathbb{R}^{2}$ that initially are in $C^{k,\beta }\cap
L^{2} $ but are not in $C^{k,\beta }$ for $t>0$. Considering the $n$%
-dimensional Euler equations with $n\geq 2$ (particularly, (\ref{1-1}) with $%
\alpha =0$), Elgindi and Masmoudi \cite{el} showed strong ill-posedness in
the class $C^{k}\cap L^{2}(\Omega )$, where $\Omega $ can be the whole
space, the torus or a smooth bounded domain. Moreover, singularity formation
for the gSQG equations has been obtained in \cite{kiselev} for patch
solutions in the half-plane for small $\alpha >0,$ while the global-in-time
regularity for the 2D Euler equations $(\alpha =0)$ holds.

The above results show that the gSQG equations present an intricate behavior
with respect to the well-posedness and singularity formation. In fact, a
complete description of such properties is far from settled for the full
range $\alpha \in (0,2).$ This scenario naturally motivates to investigate solutions exhibiting some specific properties as well as
presenting a special dynamics. In this direction, an important class of
solutions extensively studied in the last years is concerning to active
scalar equations evolving from initial measure such as the so-called vortex
patch (see \cite{cordova}). More precisely, it is described by patches where
the initial data is the characteristic function of a smooth bounded domain $%
D\subset \mathbb{R}^{2}$, namely $\theta _{0}=\boldsymbol\chi _{D}$. In this
case, the patch structure is preserved for a short time and the boundary
evolves according to the contour dynamics equation, see \cite{Gancedo-1,kiselev} for more details. For the 2D Euler equations ($\alpha =0$),
Chemin \cite{Che} observed that the $C^{1+\gamma }$-regularity of the
boundary of the patch is preserved in time by applying paradifferential
calculus. An effective tool to investigate the boundary regularity is the
contour dynamics equation. Using this technique, Hmidi \textit{et al.} \cite%
{Hmi} proved that the boundary of a rotating vortex patch is smooth provided
that the patch is close enough to the bifurcation circle in the Lipschitz
norm when $\alpha =0$. In \cite{Has} these results were extended to $\alpha
\in (0,1)$ and Castro \textit{et al.} \cite{Cas1} filled the gap to the gSQG
with $\alpha \in \lbrack 0,2)$. The case of symmetric pairs of vortex
patches was treated in \cite{HM} for $\alpha \in (0,1)$ and in \cite{cao}
for $\alpha \in \lbrack 1,2).$

In a similar spirit, there are other types of
studies such as vortex patch solutions bifurcating from ellipses or doubly
connected components (annulus). Castro \textit{et al.} \cite{Cas4} obtained
the existence of analytic rotating solutions for the vortex patch equations
with $\alpha \in \lbrack 0,2)$, bifurcating from ellipses. For the other
kind, de la Hoz \textit{et al.} \cite{de1} proved the existence of
doubly connected rotating patch solutions for gSQG with $\alpha \in (0,1)$. Another point of
view is looking to the level sets of the solutions rather than the solution.
In this way, existence of a family of global solutions for SQG and rotating
vorticities for the 2D Euler equations were obtained in \cite{Cas2} and \cite%
{Cas3}, respectively.

Above we have reviewed results on single patches and symmetric patch pairs.
Furthermore, we have asymmetric configurations for patch pairs which are of
physical interest and have been analyzed numerically by Dritschel \cite{Dritschel}
by considering specific inelastic interactions under the
principle of adiabatic steadiness. They introduce further difficulties in
the analysis of the nonlocal velocity and the contour dynamics equation
associated to (\ref{1-1}). For the 2D Euler equations, the asymmetric case
was considered by Hassainia and Hmidi \cite{has2}, where they obtained the
existence of co-rotating and traveling asymmetric vortex pairs. Considering
the gSQG equations with $\alpha \in \lbrack 0,1),$ Hassainia and Wheeler
\cite{mutipole} obtained the existence of a finite number of multipole
vortex patches when the point vortex equilibrium is non-degenerate,
providing particularly examples of asymmetric rotating and traveling vortex
patch pairs, as well as asymmetric stationary tripoles.

In this paper we are concerned with the existence of asymmetric vortex patch
pairs for (\ref{1-1}) in the range $\alpha \in \lbrack 1,2)$. More
precisely, we prove the existence of global solutions of asymmetric
co-rotating and traveling vortex pairs. For these kinds of vortex pairs, our
results extend those of \cite{has2,mutipole} to more singular velocities,
filling an open gap in the range of $\alpha $. Moreover, they complement
those of \cite{cao,Cas1} by covering the case of vortex patch pairs
with asymmetric configurations.

Inspired by arguments as in \cite{cao,Cas1,has2,HM}, we study the contour
dynamics equation associated to the gSQG equations (\ref{1-1}) with initial
data $\theta _{0}=k_{1}\boldsymbol\chi _{D_{1}}+k_{2}\boldsymbol\chi
_{D_{2}} $ where $k_{1}\neq k_{2}$, among other suitable conditions on
magnitudes $k_{1},k_{2}$ and the sets $D_{1},D_{2}$, see the profile (\ref%
{sole}) for details. By virtue of the transport equation, the solution of (%
\ref{1-1}) preserves this same initial-data structure for any time. To
illustrate the ideas, momentarily consider $\theta _{0}$ being just the
characteristic function $\boldsymbol\chi _{D}$. Then, the solution is given
by $\theta (x,t)=\boldsymbol\chi _{D(t)}.$ The motion of the boundary $%
\partial D(t)$ agrees with
\begin{equation*}
\left( \partial _{t}z(x,t)-\mathbf{v}(z(x,t),t)\right) \cdot \mathbf{n}( x)=0,
\end{equation*}%
where the boundary $\partial D(t)$ is parameterized by $z(x,t)$ and the vector $\mathbf{n}(x)$ stands for the normal vector on the boundary with $x\in \lbrack
0,2\pi )$. By a direct application of the Biot-Savart law, together with the
divergence theorem, we can recover the velocity through the formula
\begin{equation*}
\mathbf{v}(x,t)=\frac{C_{\alpha }}{2\pi }\int_{\partial D_{t}}\frac{1}{%
|x-y|^{\alpha }}\,\mathrm{d}y.
\end{equation*}%
The evolution equation for the interface is parameterized by a $2\pi $
periodic curve $z(x,t)$ that satisfies
\begin{equation*}
\partial _{t}z(x,t)=\frac{C_{\alpha }}{2\pi }\int_{0}^{2\pi }\frac{\partial
_{x}z(y,t)}{|z(x,t)-z(y,t)|^{\alpha }}\,\mathrm{d}y,
\end{equation*}%
which in the literature is known as the contour dynamics equation. The above
integral is divergent for $\alpha \in \lbrack 1,2)$. To eliminate the
singularity we can rewrite the evolution equation for the interface as
follows
\begin{equation*}
\partial _{t}z(x,t)=\frac{C_{\alpha }}{2\pi }\int_{0}^{2\pi }\frac{\partial
_{x}z(y,t)-\partial _{x}z(x,t)}{|z(x,t)-z(y,t)|^{\alpha }}\,\mathrm{d}y.
\end{equation*}%
This extra term on the last integral does not change the evolution of the
patch due to we can add terms in the tangential direction.

Our main task is to construct global solutions of co-rotating and traveling
asymmetric vortex pairs using bifurcation tools, whose supports consist of
simply connected components which are perturbations of small disks.

In what follows, we shall informally state the main result of this paper.
For a complete and more detailed version we refer to the reader to Theorems %
\ref{main}, \ref{mainb} and \ref{trans}.

\begin{theorem}
\label{thm:informal} Suppose $\alpha\in[1,2)$. Then, there exists $%
\varepsilon_0 > 0$ with the following properties:

\begin{enumerate}
\item For any $\varepsilon \in (0,\varepsilon _{0}]$ and any $\gamma
_{1},\gamma _{2}\in \mathbb{R}$ with $\gamma _{1}+\gamma _{2}\neq 0$, (\ref%
{1-1}) has a global co-rotating asymmetric vortex patch pair solution $%
\theta _{\varepsilon }(x,t)=\theta _{0,\varepsilon }\left( Q_{\Omega
_{\alpha }t}(x-\bar{x}\boldsymbol e_{1})+\bar{x}\boldsymbol e_{1}\right) $
centered at $(\bar{x},0)$, where $\theta _{0,\varepsilon }$ is defined in (\ref%
{sole}) and $\Omega _{\alpha }$ satisfies
\begin{equation*}
\Omega _{\alpha }=\Omega _{\alpha }^{\ast }+O(\varepsilon ^{\alpha }),
\end{equation*}%
with $\Omega _{\alpha }^{\ast }$ given in (\ref{ang}). Moreover, the set of
solutions $R_{i}(x)$ defined in (\ref{solutions}) parameterizes convex
patches at least of class $C^{1}$.

\item For any $\varepsilon \in (0,\varepsilon _{0}]$ and any $\gamma _{1}\in
\mathbb{R}$, there exists $\gamma _{2}=\gamma _{2}(\varepsilon )$ such that (%
\ref{1-1}) has a global traveling asymmetric vortex patch pair solution $%
\theta _{\varepsilon }(x,t)=\theta _{0,\varepsilon }(x-tU_{\alpha }%
\boldsymbol e_{2})$, where $\theta _{0,\varepsilon }$ is defined in (\ref%
{initial}) and $U_{\alpha }$ satisfies
\begin{equation*}
U_{\alpha }=U_{\alpha }^{\ast }+O(\varepsilon ^{\alpha }),
\end{equation*}%
with $U_{\alpha }^{\ast }$ given in (\ref{tra}). Moreover, the set of
solutions $R_{i}(x)$ defined in (\ref{solutions}) parameterizes convex
patches at least of class $C^{1}$.
\end{enumerate}
\end{theorem}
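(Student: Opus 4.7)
The plan is to cast both assertions as applications of the implicit function theorem to a stationary reformulation of the contour dynamics equation, with $\varepsilon$ (the disk radius) playing the role of a desingularization parameter. I parameterize the boundaries of the two simply-connected patches by
\[
z_i(x) \;=\; c_i \;+\; \varepsilon R_i(x)\bigl(\cos x,\sin x\bigr), \qquad x\in[0,2\pi),\ i=1,2,
\]
with centers $c_1=\bar x\boldsymbol e_1$, $c_2=-\bar x\boldsymbol e_1$ (the distance $\bar x$ to be calibrated), and rescaled vorticity intensities $k_i=\gamma_i\varepsilon^{-\alpha}$ so that the induced velocity remains of order one. Imposing the co-rotating (resp.\ traveling) ansatz turns the time-dependent equation into a pair of stationary scalar equations $F_i(\varepsilon,\lambda,R_1,R_2)=0$, where $\lambda=\Omega$ (resp.\ $U$). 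Because $\alpha\in[1,2)$, the naive kernel $|z_i(x)-z_i(y)|^{-\alpha}$ is not integrable; following the introduction, I replace $\partial_x z_j(y)$ by $\partial_x z_j(y)-\partial_x z_i(x)$, a change which only modifies the tangential component of the velocity and produces a well-defined principal-value integral.

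\textbf{Functional setting and regularity of $F$.} I work with $R_i-1$ in Hölder spaces $C^{k+\sigma}(\mathbb{T})$ of $2\pi$-periodic functions, chosen so that $k+\sigma>\alpha$, and restrict to the even subspace to break translation invariance. The analytic core is to show that $F=(F_1,F_2)$ is of class $C^1$ in a neighborhood of $(0,\lambda^{*},1,1)$. This relies on classical estimates for Hölder-type singular integrals combined with the subtraction trick above, together with a careful decomposition separating the self-interaction part (acting on each patch) from the mutual-interaction part (smooth for $\varepsilon$ small, since the two disks are disjoint).

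\textbf{Linearization at the trivial solution and bifurcation values.} Setting $\varepsilon=0$ collapses each patch to the circle of radius one, and the off-diagonal coupling between the two patches becomes a smooth finite-dimensional perturbation. Expanding $R_i-1 = \sum_{n\ge 1} a_n^{(i)}\cos(nx)$, the operator $\partial_{(R_1,R_2)}F(0,\lambda^{*},1,1)$ acts in a nearly diagonal way on Fourier modes. The mode $n=1$ coincides with infinitesimal rigid translation, which is automatically in the kernel and pins down the bifurcation values $\Omega_\alpha^{*}$ and $U_\alpha^{*}$ explicitly in terms of $\gamma_1,\gamma_2,\bar x$ and $\Gamma$-factors in $\alpha$; these are the quantities displayed as (\ref{ang}) and (\ref{tra}). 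For $n\ge 2$ the eigenvalues are shown, via an analysis of the associated hypergeometric-type symbols, to be non-vanishing and bounded away from zero, yielding invertibility of the full linearization between appropriate even subspaces. In the co-rotating case, $\lambda=\Omega$ is the unknown recovered from the mode-$1$ equation; in the traveling case, it is $\gamma_2$ that plays the role of the free parameter, explaining the form of statement (2).

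\textbf{Conclusion and main obstacle.} The implicit function theorem then produces, for each $\varepsilon\in(0,\varepsilon_0]$, a pair $(R_1(\varepsilon,\cdot),R_2(\varepsilon,\cdot))$ and a bifurcation value $\lambda(\varepsilon)=\lambda^{*}+O(\varepsilon^\alpha)$, the order $\varepsilon^\alpha$ arising from the homogeneity of $K^\alpha$. Smallness of $R_i-1$ in Hölder norm then forces convexity of the patches and $C^1$-regularity of the boundary. The main obstacle I foresee is the control of $F$ and its differential for $\alpha$ close to $2$: the velocity kernel is strictly more singular than in the Euler range previously treated in \cite{has2,mutipole}, and even after the tangential subtraction one is left with principal-value operators of order $\alpha-1$ whose Hölder continuity and differentiability in $(R_1,R_2)$ must be established by explicit kernel splittings. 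A second delicate point is the asymmetric character of the $2\times 2$ block acting on each Fourier mode, since the absence of reflective symmetry between the two patches prevents direct reduction to a scalar problem and one must verify non-vanishing of a genuine $2\times 2$ determinant at the chosen $\lambda^{*}$.
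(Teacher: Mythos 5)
Your overall strategy --- desingularizing the contour dynamics equation and applying the implicit function theorem around the point-vortex configuration, with two scalar parameters ($\Omega$ and the centroid $\bar x$ in the co-rotating case, $U$ and $\gamma_2$ in the traveling case) absorbing the $\sin(x)$-mode of the range --- is the same as the paper's route to Theorems \ref{main}, \ref{mainb} and \ref{trans}. Two points, however, need correction.

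First, the normalization $k_i=\gamma_i\varepsilon^{-\alpha}$ is not the right one. The desingularization requires the circulation of each patch to stay of order one as $\varepsilon\to0$, i.e.\ $k_i|D_i|=k_i\pi\varepsilon^2b_i^2(1+o(1))$ fixed, which forces $k_i\sim\varepsilon^{-2}$; the paper takes $\gamma_i/(\varepsilon^2b_i^2)$ in (\ref{sole}). With $k_i\sim\varepsilon^{-\alpha}$ and $\alpha<2$, the mutual interaction velocity at distance $d$ is $O(\varepsilon^{2-\alpha})$ and vanishes in the limit, so the mode-$1$ balance would give $\Omega\to0$ rather than the value $\Omega_\alpha^\ast$ of (\ref{ang}); the step ``setting $\varepsilon=0$ pins down the bifurcation values'' would fail as written. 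Relatedly, the paper also tunes the perturbation amplitude, writing $R_i=1+\varepsilon^{1+\alpha}b_i^{1+\alpha}p_i$ as in (\ref{solutions}), so that the self-interaction linearization survives at order one alongside the mutual interaction; with your unscaled $R_i$ this balance has to be redone.

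Second, the linearized structure is not quite as you describe. At $\varepsilon=0$ the cross-derivatives $\partial_{p_{3-i}}F_i$ vanish identically, so for every Fourier mode $n\ge2$ the $2\times2$ block is diagonal with entries $-\gamma_i n\sigma_n\neq0$: the asymmetry of the pair causes no genuine $2\times2$ coupling there. The one nontrivial $2\times2$ determinant is the one pairing the two mode-$1$ equations (one per patch) with the two scalar unknowns; it equals $-\tfrac{\gamma_1+\gamma_2}{2d^{2}}$ (up to a constant) in the co-rotating case --- which is exactly where the hypothesis $\gamma_1+\gamma_2\neq0$ enters --- and $\tfrac{\alpha C_\alpha}{2d^{1+\alpha}}\neq0$ in the traveling case. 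Finally, the paper works not in H\"older spaces but in $X^{k+\log}$ ($\alpha=1$) and $X^{k+\alpha-1}$ ($1<\alpha<2$), i.e.\ $H^k$ augmented by a logarithmic or fractional seminorm calibrated so that the linearization, which loses $\alpha-1$ derivatives (a logarithm when $\alpha=1$), is an isomorphism onto $Y^{k-1}$; your condition $k+\sigma>\alpha$ alone does not encode this loss, and making the domain and target spaces match across that loss is precisely the main technical content you would still have to supply.
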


Note that we arrive at the vortex point pairs when $\varepsilon =0.$ Then,
it allows us to recover the classical result concerning to the case of two
point vortices with magnitudes $2\pi \gamma _{1}$ and $2\pi \gamma _{2}$
separated from each other by $2d,$ which rotate uniformly about their
centroid $(\bar{x},0)$ with the angular velocity
\begin{equation}
\Omega _{\alpha }^{\ast }:=%
\begin{cases}
\frac{\gamma _{1}+\gamma _{2}}{2d^{3}}, & \quad \alpha =1, \\
\frac{\alpha C_{\alpha }(\gamma _{1}+\gamma _{2})}{2d^{2+\alpha }}, & \quad
1<\alpha <2,%
\end{cases}
\label{ang}
\end{equation}%
provided that $\gamma _{1}+\gamma _{2}\neq 0$. Similarly, we can recover the
classical result concerning to the traveling case with velocity
\begin{equation}
U_{\alpha }^{\ast }:=%
\begin{cases}
\frac{\gamma _{1}}{2d^{2}}, & \quad \alpha =1, \\
\frac{\alpha C_{\alpha }\gamma _{1}}{2d^{1+\alpha }}, & \quad 1<\alpha <2,%
\end{cases}
\label{tra}
\end{equation}%
when $\gamma _{1}$ and $\gamma _{2}$ are opposite.

Comparing with the works \cite{has2,mutipole}, the adaptation for $\alpha
\in \lbrack 1,2)$ in the asymmetric case is non-straightforward and involves
some different ingredients, especially in the analysis of the diffeomorphism
property of the nonlinear functional associated to the
problem (see, e.g., Lemma \ref{lem2-4}). In turn, comparing with the
symmetric case, the analysis of the singular velocities are more subtle
(see, e.g., (\ref{2-3}), (\ref{fi3eps}) and (\ref{Fi3})). Also, considering $%
\gamma _{1}=\gamma _{2}=1$ in Theorem \ref{thm:informal}, we recover the
results of the symmetric case obtained in \cite{cao,Cas1}.

The outline of this paper is as follows. In Section 2, we discuss some
general facts concerning the desingularization of solutions to the gSQG
equations as well as give the definition of the function spaces and the
derivation of suitable functionals. Moreover, we obtain the contour
equations governing the co-rotating and traveling asymmetric patch pairs. In
Section 3, we prove our main result which is related to the global existence
of co-rotating asymmetric patch pair solutions for the gSQG equations with $%
\alpha \in \lbrack 1,2)$. Later, the existence of traveling asymmetric patch
pair solutions to the gSQG equations is developed in Section 4.

\section{Preliminaries and desingularization}

This section is devoted to define the function spaces used in the current
paper together with some technical results related to the desingularization
of co-rotating pairs of vortices when the magnitudes are different one from
the other. We shall also discuss about the equations that governs the
dynamics of the boundary. In this way, one can see it as a system of two
periodic nonlocal equations of nonlinear type which plays a central role on
our spectral study.

\subsection{Functional spaces}

In this part, we specify the notation used throughout the paper. We begin by
defining the mean value of the function $f$ on the unit circle by
\begin{equation*}
\int \!\!\!\!\!\!\!\!\!\;{}-{}p(\tau )d\tau :=\frac{1}{2\pi }\int_{0}^{2\pi
}p(\tau )d\tau .
\end{equation*}%
In order to obtain to desired regularity of the nonlinear functional $%
F^{\alpha }$ and $G^{\alpha }$ we need to define some specific Banach spaces
$X^{k+\log }$, $X^{k}$, $Y^{k}$ and $Y_{0}^{k}.$ We start with
\begin{equation*}
X^{k}=\left\{ p\in H^{k}\times H^{k},\ p(x)=\sum\limits_{j=1}^{\infty
}a_{j}\cos (jx),\,a_{j}\in \mathbb{R}^{2},\,x\in \lbrack 0,2\pi )\right\} .
\end{equation*}%
For the case $\alpha =1,$ we use the space
\begin{equation*}
\begin{split}
X^{k+\log }=& \left\{ p\in H^{k}\times H^{k},\
p(x)=\sum\limits_{j=1}^{\infty }a_{j}\cos (jx),\ \left\Vert \int_{0}^{2\pi }%
\frac{\partial ^{k}p(x-y)-\partial ^{k}p(x)}{|\sin (\frac{y}{2})|}%
dy\right\Vert _{L^{2}}<\infty ,\right. \\
& \qquad \qquad \mbox{for}\,\,a_{j}\in \mathbb{R}^{2},\,x\in \lbrack 0,2\pi )%
\Bigg\}.
\end{split}%
\end{equation*}%
For $1<\alpha <2$, we need the following fractional space
\begin{equation*}
\begin{split}
X^{k+\alpha -1}=& \left\{ p\in H^{k}\times H^{k},\
p(x)=\sum\limits_{j=1}^{\infty }a_{j}\cos (jx),\ \left\Vert \int_{0}^{2\pi }%
\frac{\partial ^{k}p(x-y)-\partial ^{k}p(x)}{|\sin (\frac{y}{2})|^{\alpha }}%
dy\right\Vert _{L^{2}}<\infty \right\} . \\
&
\end{split}%
\end{equation*}%
Also, we define the spaces%
\begin{equation*}
Y^{k}=\left\{ p\in H^{k}\times H^{k},\ p(x)=\sum\limits_{j=1}^{\infty
}a_{j}\sin (jx),\,a_{j}\in \mathbb{R}^{2},\,x\in \lbrack 0,2\pi )\right\}
\end{equation*}%
and
\begin{equation*}
Y_{0}^{k}=Y^{k}/\text{span}\{\sin (x)\}=\left\{ p\in Y^{k},\
p(x)=\sum\limits_{j=2}^{\infty }a_{j}\sin (jx),\,a_{j}\in \mathbb{R}%
^{2},\,x\in \lbrack 0,2\pi )\right\} ,
\end{equation*}%
where $a_{j}:=(a_{j}^{1},a_{j}^{2})$. Notice that the norm of $X^{k}$, $%
Y^{k} $ and $Y_{0}^{k}$ is the well-known $H^{k}$-norm on the torus. For the
fractional spaces $X^{k+\log }$ and $X^{k+\alpha -1}$, their norms are given
by a sum of $H^{k}$-norm plus a finite integral that was defined above. More
precisely, we consider the norms
\begin{equation*}
\left\Vert p\right\Vert _{X^{k+\log }}=\left\Vert p\right\Vert
_{H^{k}}+\left\Vert \int_{0}^{2\pi }\frac{\partial ^{k}p(x-y)-\partial
^{k}p(x)}{|\sin (\frac{y}{2})|}dy\right\Vert _{L^{2}}
\end{equation*}%
and
\begin{equation*}
\left\Vert p\right\Vert _{X^{k+\alpha -1}}=\left\Vert p\right\Vert
_{H^{k}}+\left\Vert \int_{0}^{2\pi }\frac{\partial ^{k}p(x-y)-\partial
^{k}p(x)}{|\sin (\frac{y}{2})|^{\alpha }}dy\right\Vert _{L^{2}}.
\end{equation*}%
We have the useful relations $X^{k+\mu }\subset X^{k+\log }\subset X^{k}$,
for every $\mu >0$. From now on, we always assume $k\geq 3$.

\subsection{Co-rotating vortex pairs}

We begin this section by studying co-rotating asymmetric patch pairs of
simply connected patches $D_{i}^{\varepsilon }$, $i=1,2,$ for the gSQG
equations. To this end, we shall fix the following convention and notation.
Let $\varepsilon \in (0,1),$ $b_{1},b_{2}\in \lbrack 0,+\infty )$ and let $%
\gamma _{1},\gamma _{2}$ be real numbers such that $d>2(b_{1}+b_{2})$. We
denote by $D_{1}^{\varepsilon }$ and $D_{2}^{\varepsilon }$ two simply
connected domains, which are closing to the unit disk and contained in the
disk with radius $2$, where both disks are centered at the origin. Suppose
that the initial data $\theta _{0,\varepsilon }$ is a co-rotating asymmetric
patch pair defined as
\begin{equation}
\theta _{0,\varepsilon }=\frac{\gamma _{1}}{\varepsilon ^{2}b_{1}^{2}}\chi _{%
\tilde{D}_{1}^{\varepsilon }}+\frac{\gamma _{2}}{\varepsilon ^{2}b_{2}^{2}}%
\chi _{\tilde{D}_{2}^{\varepsilon }},  \label{sole}
\end{equation}%
where $\tilde{D}_{1}^{\varepsilon }$, $\tilde{D}_{2}^{\varepsilon }\subset
\mathbb{R}^{2}$ satisfy
\begin{equation*}
\tilde{D}_{1}^{\varepsilon }=\varepsilon b_{1}D_{1}^{\varepsilon }\quad
\text{\textnormal{and}}\quad \tilde{D}_{2}^{\varepsilon }=-\varepsilon
b_{2}D_{2}^{\varepsilon }+d\,\boldsymbol{e}_{1} .
\end{equation*}%
We assume in this section that the initial data $\theta _{0,\varepsilon }$
is of type co-rotating pair of patches around the centroid of the system $%
\bar{x}\boldsymbol e_{1}\in \mathbb{R}^{2}$. Hence, we want to obtain the
existence of a new family of solutions for the gSQG with $\alpha \in \lbrack
1,2)$, namely co-rotating asymmetric vortex patch pairs about the point $%
(\bar{x},0)$. In this situation, a co-rotating patch pair with center at $%
(\bar{x},0)$ is given by
\begin{equation*}
\theta _{\varepsilon }(x-\bar{x}\boldsymbol e_{1},t)=\theta _{0,\varepsilon
}\left( Q_{\Omega t}(x-\bar{x}\boldsymbol e_{1})+\bar{x}\boldsymbol e_{1}\right)
,
\end{equation*}%
where $Q_{\omega }$ denotes the counter-clockwise rotation operator of angle
$\omega $ with respect to the origin, $\Omega $ is a fixed uniform angular
speed and $\boldsymbol e_{1}=(1,0)$ is the unitary vector. Inserting the
above equation into (\ref{1-1}) yields
\begin{equation*}
\left( \mathbf{v}_{0}(x)+\Omega (x-\bar{x}\boldsymbol e_{1})^{\perp }\right)
\cdot \nabla \theta _{0,\varepsilon }(x)=0,
\end{equation*}%
which is equivalent to
\begin{equation*}
\left( \mathbf{v}_{0}(x)+\Omega (x-\bar{x}\boldsymbol{e}_{1})^{\perp }\right)
\cdot \mathbf{n}( x)=0,\ \ \ \forall \,x\in \partial \tilde{D}_{1}^{\varepsilon }\cup
\partial \tilde{D}_{2}^{\varepsilon },
\end{equation*}%
where $\mathbf{n}(x)$ stands for the unitary normal vector. We are interested in
patches closing to the unit disk $\tilde{D}_{i}^{\varepsilon }$. Thus, we
can assume that the patch can be parameterized by
\begin{equation*}
z_{i}(x)=\left( \varepsilon b_{i}R_{i}(x)\cos (x),\varepsilon
b_{i}R_{i}(x)\sin (x)\right), \quad \mbox{for}\quad i=1,2,
\end{equation*}%
where
\begin{equation}\label{solutions}
R_{i}(x)=1+\varepsilon ^{1+\alpha }b_{i}^{1+\alpha }p_{i}(x), \quad \mbox{for}\quad i=1,2,
\end{equation}
with $x\in \lbrack 0,2\pi )$, and $p_{i}$ a $C^{1}$ function. We start by
writing up the system for $R_{i}(x)$ and then we use its expression in order
to obtain our system in terms of $p_{i}(x)$. Note that constructing
co-rotating asymmetric solution pairs for the gSQG equations is equivalent
to finding a zero of the following equations
\begin{equation}
\begin{split}
&-\Omega \left( \varepsilon b_{1}R_{1}(x)R^{\prime }(x)-\bar{x}(R_{1}^{\prime
}(x)\cos (x)-R_{1}(x)\sin (x))\right)  \\
& {\resizebox{.98\hsize}{!}{$+\frac{\gamma_1
C_\alpha}{\varepsilon^{1+\alpha}b_1^{1+\alpha} }\int\!\!\!\!\!\!\!\!\!\;
{}-{}
\frac{\left((R_1(x)R_1(y)+R_1'(x)R_1'(y))%
\sin(x-y)+(R_1(x)R_1'(y)-R_1'(x)R_1(y))\cos(x-y)\right)dy}{\left|
\left(R_1(x)-R_1(y)\right)^2+4R_1(x)R_1(y)\sin^2\left(\frac{x-y}{2}\right)%
\right|^{\frac{\alpha}{2}}}$}} \\
& {\resizebox{.98\hsize}{!}{$+\frac{\gamma_2 C_\alpha}{\varepsilon b_2 }
\int\!\!\!\!\!\!\!\!\!\; {}-{}
\frac{\left((R_2(x)R_2(y)+R_2'(x)R_2'(y))%
\sin(x-y)+(R_2(x)R_2'(y)-R_2'(x)R_2(y))\cos(x-y)\right)dy}{\left|(%
\varepsilon b_2 R_2(y)\cos(y)+\varepsilon b_1
R_1(x)\cos(x)-d)^2+(\varepsilon b_2 R_2(y)\sin(y)+\varepsilon b_1
R_1(x)\sin(x))^2\right|^{\frac{\alpha}{2}}}$}} \\
& \qquad =0,
\end{split}
\label{1-3}
\end{equation}%
and
\begin{equation}
\begin{split}
& -\Omega \left( \varepsilon b_{2}R_{2}(x)R_{2}^{\prime }(x)+\bar{x}
(R_{2}^{\prime }(x)\cos (x)-R_{2}(x)\sin (x))-dR_{2}^{\prime }(x)\cos
(x)+dR_{2}(x)\sin (x))\right)  \\
& {\resizebox{.98\hsize}{!}{$+\frac{\gamma_2
C_\alpha}{\varepsilon^{1+\alpha}b_2^{1+\alpha} }\int\!\!\!\!\!\!\!\!\!\;
{}-{}
\frac{\left((R_2(x)R_2(y)+R_2'(x)R_2'(y))%
\sin(x-y)+(R_2(x)R_2'(y)-R_2'(x)R_2(y))\cos(x-y)\right)dy}{\left|
\left(R_2(x)-R_2(y)\right)^2+4R_2(x)R_2(y)\sin^2\left(\frac{x-y}{2}\right)%
\right|^{\frac{\alpha}{2}}}$}} \\
& {\resizebox{.98\hsize}{!}{$+\frac{\gamma_1 C_\alpha}{\varepsilon b_1 }
\int\!\!\!\!\!\!\!\!\!\; {}-{}
\frac{\left((R_1(x)R_1(y)+R_1'(x)R_1'(y))%
\sin(x-y)+(R_1(x)R_1'(y)-R_1'(x)R_1(y))\cos(x-y)\right)dy}{\left|(%
\varepsilon b_1 R_1(y)\cos(y)+\varepsilon b_2
R_2(x)\cos(x)-d)^2+(\varepsilon b_1 R_1(y)\sin(y)+\varepsilon b_2
R_2(x)\sin(x))^2\right|^{\frac{\alpha}{2}}}$}} \\
& \qquad =0.
\end{split}
\label{1-3b}
\end{equation}%
The above two equations can be unified on a system as follows
\begin{equation}
\begin{split}
& {\resizebox{.98\hsize}{!}{$- \Omega\left(\varepsilon b_i
R_i(x)R_i'(x)+(-1)^i
\cdot(R_i'(x)\cos(x)-R_i(x)\sin(x))-(i-1)d(R_i'(x)\cos(x)-R_i(x)\sin(x))%
\right)$}} \\
& {\resizebox{.98\hsize}{!}{$+\frac{\gamma_i
C_\alpha}{\varepsilon^{1+\alpha}b_i^{1+\alpha} }\int\!\!\!\!\!\!\!\!\!\;
{}-{}
\frac{\left((R_i(x)R_i(y)+R_i'(x)R_i'(y))%
\sin(x-y)+(R_i(x)R_i'(y)-R_i'(x)R_i(y))\cos(x-y)\right)dy}{\left|
\left(R_i(x)-R_i(y)\right)^2+4R_i(x)R_i(y)\sin^2\left(\frac{x-y}{2}\right)%
\right|^{\frac{\alpha}{2}}}$}} \\
& {\resizebox{.98\hsize}{!}{$+\frac{\gamma_{3-i} C_\alpha}{\varepsilon
b_{3-i}} \int\!\!\!\!\!\!\!\!\!\; {}-{}
\frac{\left((R_{3-i}(x)R_{3-i}(y)+R_{3-i}'(x)R_{3-i}'(y))%
\sin(x-y)+(R_{3-i}(x)R_{3-i}'(y)-R_{3-i}'(x)R_{3-i}(y))\cos(x-y)\right)dy}{%
\left|(\varepsilon b_{3-i} R_{3-i}(y)\cos(y)+\varepsilon b_i
R_i(x)\cos(x)-d)^2+(\varepsilon b_{3-i} R_{3-i}(y)\sin(y)+\varepsilon b_i
R_i(x)\sin(x))^2\right|^{\frac{\alpha}{2}}}$}} \\
& \qquad =0,
\end{split}
\label{rot}
\end{equation}%
for $i=1,2.$

\subsection{Traveling vortex pairs}

The second family of solutions in which we are interested in constructing
are the traveling patches associated to the gSQG equations. As in the case
of co-rotating case, we denote by $D_{1}^{\varepsilon }$ and $%
D_{2}^{\varepsilon }$ two simply connected domains, which are small
perturbations of the unit disk and contained in the disk with radius $2$,
where both disks are centered at the origin. Let $b_{1},b_{2}\in \lbrack
0,+\infty )$ and $\varepsilon \in (0,1)$ satisfy $d>2(b_{1}+b_{2})$. Let $%
\tilde{D}_{i}^{\varepsilon }$, $i=1,2,$ be two domains defined as
\begin{equation}
\tilde{D}_{1}^{\varepsilon }=\varepsilon b_{1}D_{1}^{\varepsilon }\quad
\mbox{and}\quad \tilde{D}_{2}^{\varepsilon }=-\varepsilon
b_{2}D_{2}^{\varepsilon }+d\boldsymbol e_{1}.  \label{domains rot}
\end{equation}%
In this situation, for any given real numbers $\gamma _{1},\gamma _{2}$, the
initial vorticity data has the following form
\begin{equation}
\theta _{0,\varepsilon }(x)=\frac{\gamma _{1}}{\varepsilon ^{2}b_{1}^{2}}%
\boldsymbol\chi _{\tilde{D}_{1}^{\varepsilon }}-\frac{\gamma _{2}}{%
\varepsilon ^{2}b_{2}^{2}}\boldsymbol\chi _{\tilde{D}_{2}^{\varepsilon }},
\label{initial}
\end{equation}%
which is composed of a pair of simply connected vorticities with
unequal-size magnitudes of the vorticity, namely $\gamma _{1}$ and $\gamma
_{2}$. We also assume that these patches travel steadily in $(Oy)$ direction
with constant uniform velocity $U$.

Note that the initial vorticity data $\theta _{0,\varepsilon }$ is composed
for a traveling pair of patches with center $\bar{x}_{0}\boldsymbol %
e_{1}\in \mathbb{R}^{2}$. Thus, the traveling patch pair with centroid $%
\bar{x} $ is given by
\begin{equation*}
\theta _{\varepsilon }(x,t)=\theta _{0,\varepsilon }(x-tU\boldsymbol e_{2}),
\end{equation*}%
where $U$ is some uniform fixed speed. According to (\ref{1-1}), we arrive
at
\begin{equation*}
(\mathbf{v}_{0}(x)-U\boldsymbol e_{2})\cdot \nabla \theta _{0,\varepsilon
}(x)=0.
\end{equation*}%
In other words, the above system can be expressed as
\begin{equation*}
(\mathbf{v}_{0}(x)-U\boldsymbol e_{2})\cdot\mathbf{n}( x)=0,\ \ \ \forall \,x\in
\partial \tilde{D}_{1}^{\varepsilon }\cup \partial \tilde{D}%
_{2}^{\varepsilon }.
\end{equation*}%
Once again, our study is reduced to finding an asymmetric traveling solution
patch pairs associated to the gSQG. In other words, we are interesting in
finding a zero of the following equations
\begin{equation}
\begin{split}
& -U\left( R_{1}(x)\cos (x)-R_{1}^{\prime }(x)\sin (x)\right) \\
& {\resizebox{.98\hsize}{!}{$+\frac{\gamma_1
C_\alpha}{\varepsilon^{1+\alpha}b_1^{1+\alpha} }\int\!\!\!\!\!\!\!\!\!\;
{}-{}
\frac{\left((R_1(x)R_1(y)+R_1'(x)R_1'(y))%
\sin(x-y)+(R_1(x)R_1'(y)-R_1'(x)R_1(y))\cos(x-y)\right)dy}{\left|
\left(R_1(x)-R_1(y)\right)^2+4R_1(x)R_1(y)\sin^2\left(\frac{x-y}{2}\right)%
\right|^{\frac{\alpha}{2}}}+$}} \\
& {\resizebox{.98\hsize}{!}{$+\frac{\gamma_2 C_\alpha}{\varepsilon b_2 }
\int\!\!\!\!\!\!\!\!\!\; {}-{}
\frac{\left((R_2(x)R_2(y)+R_2'(x)R_2'(y))%
\sin(x-y)+(R_2(x)R_2'(y)-R_2'(x)R_2(y))\cos(x-y)\right)dy}{\left|(%
\varepsilon b_2 R_2(y)\cos(y)+\varepsilon b_1
R_1(x)\cos(x)-d)^2+(\varepsilon b_2 R_2(y)\sin(y)+\varepsilon b_1
R_1(x)\sin(x))^2\right|^{\frac{\alpha}{2}}}$}} \\
& \qquad =0
\end{split}
\label{t1}
\end{equation}%
and
\begin{equation}
\begin{split}
& -U\left( R_{2}(x)\cos (x)-R_{2}^{\prime }(x)\sin (x)\right) \\
& {\resizebox{.98\hsize}{!}{$+\frac{\gamma_2
C_\alpha}{\varepsilon^{1+\alpha}b_2^{1+\alpha} }\int\!\!\!\!\!\!\!\!\!\;
{}-{}
\frac{\left((R_2(x)R_2(y)+R_2'(x)R_2'(y))%
\sin(x-y)+(R_2(x)R_2'(y)-R_2'(x)R_2(y))\cos(x-y)\right)dy}{\left|
\left(R_2(x)-R_2(y)\right)^2+4R_2(x)R_2(y)\sin^2\left(\frac{x-y}{2}\right)%
\right|^{\frac{\alpha}{2}}}$}} \\
& {\resizebox{.98\hsize}{!}{$+\frac{\gamma_1 C_\alpha}{\varepsilon b_1 }
\int\!\!\!\!\!\!\!\!\!\; {}-{}
\frac{\left((R_1(x)R_1(y)+R_1'(x)R_1'(y))%
\sin(x-y)+(R_1(x)R_1'(y)-R_1'(x)R_1(y))\cos(x-y)\right)dy}{\left|(%
\varepsilon b_1 R_1(y)\cos(y)+\varepsilon b_2
R_2(x)\cos(x)-d)^2+(\varepsilon b_1 R_1(y)\sin(y)+\varepsilon b_2
R_2(x)\sin(x))^2\right|^{\frac{\alpha}{2}}}$}} \\
& \qquad =0.
\end{split}
\label{t2}
\end{equation}%
The above two equations can be unified on a system as in the co-rotating
case as follows
\begin{equation}
\begin{split}
& U\left( R_{i}(x)\cos (x)-R_{i}^{\prime }(x)\sin (x)\right) \\
& {\resizebox{.98\hsize}{!}{$+\frac{\gamma_i
C_\alpha}{\varepsilon^{1+\alpha}b_i^{1+\alpha} }\int\!\!\!\!\!\!\!\!\!\;
{}-{}
\frac{\left((R_i(x)R_i(y)+R_i'(x)R_i'(y))%
\sin(x-y)+(R_i(x)R_i'(y)-R_i'(x)R_i(y))\cos(x-y)\right)dy}{\left|
\left(R_i(x)-R_i(y)\right)^2+4R_i(x)R_i(y)\sin^2\left(\frac{x-y}{2}\right)%
\right|^{\frac{\alpha}{2}}}$}} \\
& {\resizebox{.98\hsize}{!}{$+\frac{\gamma_{3-i} C_\alpha}{\varepsilon
b_{3-i} } \int\!\!\!\!\!\!\!\!\!\; {}-{}
\frac{\left((R_{3-i}(x)R_{3-i}(y)+R_{3-i}'(x)R_{3-i}'(y))%
\sin(x-y)+(R_{3-i}(x)R_{3-i}'(y)-R_{3-i}'(x)R_{3-i}(y))\cos(x-y)\right)dy}{%
\left|(\varepsilon b_{3-i} R_{3-i}(x)\cos(x)+\varepsilon b_i
R_i(y)\cos(y)-d)^2+(\varepsilon b_{3-i} R_{3-i}(x)\sin(x)+\varepsilon b_i
R_i(y)\sin(y))^2\right|^{\frac{\alpha}{2}}}$}} \\
& \qquad =0,
\end{split}
\label{t3}
\end{equation}%
for $i=1,2.$

\section{Existence of asymmetric co-rotating solutions for the gSQG equations%
}

In this section, we try to extend the range of $\varepsilon $ to $(-\frac{1}{%
2},\frac{1}{2})$ in the case of asymmetric patch pairs that are small
perturbation of the unit disk. In this case, we can assume that the patch
can be parameterized as $R_{i}(x)=1+\varepsilon ^{1+\alpha }b_{i}^{1+\alpha
}p_{i}(x)$, for $i=1,2$. With this in mind, now we describe the set of
solutions of the system $F_{i}^{\alpha }=0$ around the point $(\varepsilon ,\Omega
,\bar{x},p_{1},p_{2})$. Thus, we consider $F_{i}^{\alpha }$
with the form
\begin{equation*}
F_{i}^{\alpha }=F_{i1}+F_{i2}+F_{i3},\,\mbox{for}\quad i=1,2,
\end{equation*}%
where
\begin{equation}
\begin{split}
& F_{i1}=-\Omega \left( |\varepsilon |^{2+\alpha }b_{i}^{2+\alpha
}p_{i}^{\prime }(x)+(-1)^{i}\bar{x} \left( \frac{\varepsilon ^{1+\alpha
}b_{i}^{1+\alpha }p_{i}^{\prime }(x)\cos (x)}{1+\varepsilon ^{1+\alpha
}b_{i}^{1+\alpha }p_{i}(x)}-\sin (x)\right) \right. \\
& \qquad \qquad \left. -(i-1)d\left( \frac{|\varepsilon |^{1+\alpha
}b_{i}^{1+\alpha }p_{i}^{\prime }(x)\cos (x)}{1+\varepsilon ^{1+\alpha
}b_{i}^{1+\alpha }p_{i}(x)}-\sin (x)\right) \right. \biggl),
\end{split}
\label{2-1}
\end{equation}

\begin{equation}
\begin{split}
& {\resizebox{.98\hsize}{!}{$
F_{i2}=\frac{C_\alpha\gamma_i}{\varepsilon|\varepsilon|^{\alpha}b_i^{1+%
\alpha}}\int\!\!\!\!\!\!\!\!\!\; {}-{}
\frac{(1+\varepsilon|\varepsilon|^\alpha b^{1+\alpha}_i
p_i(y))\sin(x-y)dy}{\left(
|\varepsilon|^{2+2\alpha}b_i^{2+2\alpha}\left(p_i(x)-p_i(y)\right)^2+4(1+%
\varepsilon|\varepsilon|^\alpha b^{1+\alpha}_i
p_i(x))(1+\varepsilon|\varepsilon|^\alpha b^{1+\alpha}_i
p_i(y))\sin^2\left(\frac{x-y}{2}\right)\right)^{\frac{\alpha}{2}}}$}} \\
& +C_{\alpha }\gamma _{i}\,{\resizebox{.9\hsize}{!}{$
\int\!\!\!\!\!\!\!\!\!\; {}-{} \frac{(p'_i(y)-p'_i(x))\cos(x-y)dy}{\left(
|\varepsilon|^{2+2\alpha}b_i^{2+2\alpha}\left(p_i(x)-p_i(y)\right)^2+4(1+%
\varepsilon|\varepsilon|^\alpha b^{1+\alpha}_i
p_i(x))(1+\varepsilon|\varepsilon|^\alpha b^{1+\alpha}_i
p_i(y))\sin^2\left(\frac{x-y}{2}\right)\right)^{\frac{\alpha}{2}}}$}} \\
& +{\resizebox{.96\hsize}{!}{$\frac{C_\alpha
\gamma_i\varepsilon|\varepsilon|^\alpha b^{1+\alpha}_i
p'_i(x)}{1+\varepsilon|\varepsilon|^\alpha
b^{1+\alpha}_ip_i(x)}\int\!\!\!\!\!\!\!\!\!\; {}-{}
\frac{(p_i(x)-p_i(y))\cos(x-y)dy}{\left(
|\varepsilon|^{2+2\alpha}b_i^{2+2\alpha}\left(p_i(x)-p_i(y)\right)^2+4(1+%
\varepsilon|\varepsilon|^\alpha b^{1+\alpha}_i
p_i(x))(1+\varepsilon|\varepsilon|^\alpha b^{1+\alpha}_i
p_i(y))\sin^2\left(\frac{x-y}{2}\right)\right)^{\frac{\alpha}{2}}}$}} \\
& +{\resizebox{.96\hsize}{!}{$\frac{C_\alpha\gamma_i\varepsilon|%
\varepsilon|^\alpha b^{1+\alpha}_i }{1+\varepsilon|\varepsilon|^\alpha
b^{1+\alpha}_i p_i(x)}\int\!\!\!\!\!\!\!\!\!\; {}-{}
\frac{p'_i(x)p'_i(y)\sin(x-y)dy}{\left(
|\varepsilon|^{2+2\alpha}b_i^{2+2\alpha}\left(p_i(x)-p_i(y)\right)^2+4(1+%
\varepsilon|\varepsilon|^\alpha b^{1+\alpha}_i
p_i(x))(1+\varepsilon|\varepsilon|^\alpha b^{1+\alpha}_i
p_i(y))\sin^2\left(\frac{x-y}{2}\right)\right)^{\frac{\alpha}{2}}}$}} \\
& =F_{i21}+F_{i22}+F_{i23}+F_{i24},\qquad \mbox{for}\,i=1,2,
\end{split}
\label{2-2}
\end{equation}%
and
\begin{equation}
\begin{split}
& {\resizebox{.98\hsize}{!}{$ F_{i3}=\frac{\gamma_{3-i} C_\alpha
(1+\varepsilon|\varepsilon|^\alpha b_{3-i}^{1+\alpha}
p_{3-i}(x))}{\varepsilon b_{3-i} (1+\varepsilon|\varepsilon|^\alpha
b^{1+\alpha}_i p_i(x)) }\int\!\!\!\!\!\!\!\!\!\; {}-{}
\frac{(1+\varepsilon|\varepsilon|^\alpha b_{3-i}^{1+\alpha}
p_{3-i}(y))\sin(x-y)dy}{\left|(\varepsilon b_{3-i}
R_{3-i}(y)\cos(y)+\varepsilon b_i R_i(x)\cos(x)-d)^2+(\varepsilon b_{3-i}
R_{3-i}(y)\sin(y)+\varepsilon b_i
R_i(x)\sin(x))^2\right|^{\frac{\alpha}{2}}}$}} \\
&
{\resizebox{.98\hsize}{!}{$+\frac{\gamma_{3-i} C_\alpha \varepsilon
|\varepsilon|^{2\alpha}b_{3-i}^{1+2\alpha}}{1+\varepsilon|\varepsilon|^%
\alpha b^{1+\alpha}_i p_i(x)}\int\!\!\!\!\!\!\!\!\!\; {}-{} \frac{
p_{3-i}'(x)p_{3-i}'(y)\sin(x-y)dy}{\left|(\varepsilon b_{3-i}
R_{3-i}(y)\cos(y)+\varepsilon b_i R_i(x)\cos(x)-d)^2+(\varepsilon b_{3-i}
R_{3-i}(y)\sin(y)+\varepsilon b_i
R_i(x)\sin(x))^2\right|^{\frac{\alpha}{2}}}$}} \\
& {\resizebox{.98\hsize}{!}{$ +\frac{\gamma_{3-i} C_\alpha
|\varepsilon|^\alpha (1+\varepsilon|\varepsilon|^\alpha b_{3-i}^{1+\alpha}
p_{3-i}(x))}{1+\varepsilon|\varepsilon|^\alpha b^{1+\alpha}_i
p_i(x)}\int\!\!\!\!\!\!\!\!\!\; {}-{}
\frac{b_{3-i}^{\alpha}(p'_{3-i}(y)-p'_{3-i}(x))\cos(x-y)dy}{\left|(%
\varepsilon b_{3-i} R_{3-i}(y)\cos(y)+\varepsilon b_i
R_i(x)\cos(x)-d)^2+(\varepsilon b_{3-i} R_{3-i}(y)\sin(y)+\varepsilon b_i
R_i(x)\sin(x))^2\right|^{\frac{\alpha}{2}}}$}} \\
& {\resizebox{.98\hsize}{!}{$+\frac{\gamma_{3-i} C_\alpha
\varepsilon|\varepsilon|^{2\alpha}p_i'(x)}{1+\varepsilon|\varepsilon|^\alpha
b^{1+\alpha}_i p_i(x)}\int\!\!\!\!\!\!\!\!\!\; {}-{}
\frac{b_{3-i}^{1+2\alpha}(p_{3-i}(x)-p_{3-i}(y))\cos(x-y)dy}{\left|(%
\varepsilon b_{3-i}^{\alpha} R_{3-i}(y)\cos(y)+\varepsilon b_i
R_i(x)\cos(x)-d)^2+(\varepsilon b_{3-i} R_{3-i}(y)\sin(y)+\varepsilon b_i
R_i(x)\sin(x))^2\right|^{\frac{\alpha}{2}}}$}} \\
& =F_{i31}+F_{i32}+F_{i33}+F_{i34},\qquad \mbox{for}\,i=1,2.
\end{split}
\label{2-3}
\end{equation}%
Now, we outline the proof of our main result. To do this, we shall use the
following notation related to the nonlinear functionals
\begin{equation*}
F^{\alpha }(\varepsilon ,\Omega ,\bar{x},p_{1},p_{2}):=\left( F_{1}^{\alpha
}(\varepsilon ,\Omega ,\bar{x},p_{1},p_{2}),F_{2}^{\alpha }(\varepsilon
,\Omega ,\bar{x},p_{1},p_{2})\right)
\end{equation*}%
and
\begin{equation*}
G^{\alpha }(\varepsilon ,U,\gamma _{2},p_{1},p_{2}):=\left( G_{1}^{\alpha
}(\varepsilon ,U,\gamma _{2},p_{1},p_{2}),G_{2}^{\alpha }(\varepsilon
,U,\gamma _{2},p_{1},p_{2})\right) .
\end{equation*}%
From now on, we define $g=(\Omega ,\bar{x},p_{1},p_{2})$. In order to ensure
the existence of co-rotating asymmetric vortex pairs we split out the proof into several
steps. These steps are given in the form of lemmas and they are specifically
dedicated to seeing if the hypotheses of the implicit function theorem for
the functional $F^{\alpha }(\varepsilon ,g)=0$ are fulfilled. More
precisely, we need to verify $F^{\alpha }(\varepsilon ,g)$ and its Gateaux
derivatives on the direction $h_{i}$ and $h_{3-i},$ for $i=1,2,$ are both
continuous from $\left( -\frac{1}{2},\frac{1}{2}\right) \times \mathbb{R}%
\times \mathbb{R}\times B_{r}$ to $Y^{k-1}$, where $B_{r}$ is a neighborhood
centered at the origin in $X^{k+\log }$ $(\alpha =1)$, or $X^{k+\alpha -1}$ $%
(1<\alpha <2)$.

To this end, we need to compute the linearized operator $%
D_{(p_{1},p_{2})}F^{\alpha }(0,\Omega ,\bar{x},0,0)(h_{1},h_{2})(x)$ which is
given by
\begin{equation*}
D_{(}p_{1},p_{2})F^{\alpha }(0,\Omega ,\bar{x},0,0)(h_{1},h_{2})(x)=%
\begin{pmatrix}
-{\gamma _{1}}\sum\limits_{j=2}^{\infty }a_{j}^{1}\sigma _{j}j\sin (jx), \\
-{\gamma _{2}}\sum\limits_{j=2}^{\infty }a_{j}^{2}\sigma _{j}j\sin (jx),%
\end{pmatrix}%
,
\end{equation*}%
with
\begin{equation}
\sigma _{j}:=%
\begin{cases}
2^{\alpha }\frac{\Gamma (1-\alpha )}{\Gamma (\frac{\alpha }{2})\Gamma (1-%
\frac{\alpha }{2})}\left( \frac{\Gamma (\frac{\alpha }{2})}{\Gamma (1-\frac{%
\alpha }{2})}-\frac{\Gamma (j+\frac{\alpha }{2})}{\Gamma (j+1-\frac{\alpha }{%
2})}\right) , & \text{for}\ \alpha \neq 1, \\
\sum\limits_{i=1}^{j}\frac{8}{2i-1} & \text{for}\ \alpha =1. \\
&
\end{cases}
\label{sigma}
\end{equation}%
This operator is not invertible from $X^{k+\log }$ $(\alpha =1)$, or $%
X^{k+\alpha -1}$ $(1<\alpha <2),$ to $Y^{k-1}$ but it is invertible from $%
X^{k+\log }$ $(\alpha =1)$, or $X^{k+\alpha -1}$ $(1<\alpha <2),$ to $%
Y_{0}^{k}$, for more details see \cite{cao,Cas1}.
One way to deal with this defect is considering the operator $D_{(\Omega
,\bar{x},f_{1},f_{2})}F(0,\Omega^*_{\alpha},\bar{x}_\alpha^*,0,0)h(x)$, where $h=(\beta _{1},\beta
_{2},h_{1},h_{2})\in \mathbb{R}\times \mathbb{R}\times X^{k+\log }$ $(\alpha
=1)$ or $h=(\beta _{1},\beta _{2},h_{1},h_{2})\in \mathbb{R}\times \mathbb{R}%
\times X^{k+\alpha -1}$ $(1<\alpha <2),$ where $g= (\Omega ,\bar{x},p_{1},p_{2})$ and $ g_{0}=
(\Omega^{\ast }_\alpha,\bar{x}^{\ast }_\alpha,0,0),$ satisfying:

\noindent For $\alpha =1,$
\begin{align*}
D_{g}F^{1}(0,g_{0})h(x)=& -\dfrac{\beta _{1}d}{\gamma _{1}+\gamma _{2}}%
\begin{pmatrix}
\gamma _{2} \\
\gamma _{1}%
\end{pmatrix}%
\sin (x)-\beta _{2}\dfrac{\gamma _{1}+\gamma _{2}}{2d^{3}}%
\begin{pmatrix}
1 \\
-1%
\end{pmatrix}%
\sin (x) \\
& \qquad -\sum_{j=2}^{\infty }\,j\sigma _{j}%
\begin{pmatrix}
{\gamma _{1}}a_{j}^{1} \\
{\gamma _{2}}a_{j}^{2}%
\end{pmatrix}%
\sin (jx).
\end{align*}%
For $1<\alpha <2,$
\begin{align*}
D_{g}F^{\alpha }(0,g_{0})h(x)=& -\dfrac{\beta _{1}d}{\gamma _{1}+\gamma _{2}}%
\begin{pmatrix}
\gamma _{2} \\
\gamma _{1}%
\end{pmatrix}%
\sin (x)-\beta _{2}\dfrac{\alpha C_{\alpha }(\gamma _{1}+\gamma _{2})}{%
2d^{2+\alpha }}%
\begin{pmatrix}
1 \\
-1%
\end{pmatrix}%
\sin (x) \\
& \qquad -\sum_{j=2}^{\infty }\,j\sigma _{j}%
\begin{pmatrix}
{\gamma _{1}}a_{j}^{1} \\
{\gamma _{2}}a_{j}^{2}%
\end{pmatrix}%
\sin (jx),
\end{align*}%
which are invertible from $\mathbb{R}\times \mathbb{R}\times X^{k+\log },$
or $\mathbb{R}\times \mathbb{R}\times X^{k+\alpha -1},$ to $Y^{k}$,
depending on the value of $\alpha $ is $(\alpha =1)$ or $(1<\alpha <2)$,
respectively, where $\sigma _{j}$ is defined in \eqref{sigma}. Furthermore,
one also verifies that $D_{g}F^{\alpha }(0,g_{0})$ is an isomorphism from $%
X^{k+\alpha -1}$ $(1<\alpha <2),$ or $X^{k+\log }$ $(\alpha =1),$ to $%
Y^{k-1} $. Hence, we are in position to invoke the implicit function theorem
around the point $(\varepsilon ,\Omega ,\bar{x},p_{1},p_{2})=(0,\Omega
_{\alpha }^{\ast },\bar{x}^{\ast }_\alpha,0,0)$ to guarantee the existence of
non-trivial co-rotating global asymmetric solutions with unequal magnitudes
associated to the gSQG in the range $\alpha \in \lbrack 1,2)$, where $\Omega
_{\alpha }^{\ast }$ and $\bar{x}^{\ast }_\alpha$ are defined in \eqref{ang} and %
\eqref{posb}, respectively.

\subsection{SQG equations ($\protect\alpha=1$)}

Let $B_{r}$ be the open neighborhood centered at zero in the space $
X^{k+\log }$ defined by
\begin{equation*}
B_{r}:=\left\{ p\in X^{k+\log }:\ \Vert p\Vert _{X^{k+\log }}<r\right\}
\end{equation*}%
with $0<r<1$. In what follows, we obtain the $C^{1}-$regularity of the
nonlinear functional $F_{i}^{1}(\varepsilon ,\Omega ,\bar{x},p_{1},p_{2})$.
First, we begin by proving the continuity of the functional and we also see
how to extend to the case $\varepsilon \leq 0$.

\begin{lemma}
\label{lem2-2} The nonlinear functional $F^1_{i}(\varepsilon,
\Omega,\bar{x},p_1,p_2): \left(-\frac{1}{2}, \frac{1}{2}\right)\times \mathbb{R}
\times \mathbb{R}\times B_r \rightarrow Y^{k-1}$ is continuous.
\end{lemma}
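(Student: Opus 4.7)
The plan is to establish continuity by analyzing separately the three summands in the decomposition $F_i^1 = F_{i1} + F_{i2} + F_{i3}$ recorded in \eqref{2-1}--\eqref{2-3}. A common preliminary step is that for $\Vert p_i\Vert_{X^{k+\log}} < r$ with $r$ sufficiently small and $|\varepsilon|<1/2$, the Sobolev embedding $X^{k+\log}\hookrightarrow C^1$ (valid since $k\geq 3$) controls $\Vert p_i\Vert_{C^1}$, so each denominator of the form $1+\varepsilon^{1+\alpha}b_i^{1+\alpha}p_i(x)$ stays uniformly bounded away from zero and its reciprocal depends continuously on $p_i$ via a Neumann series in $H^{k-1}$. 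Moreover, parity in $x$ of all integrands, together with the fact that $p_i$ is a pure cosine series, guarantees that the output is a sine series, so $Y^{k-1}$-valuedness reduces to $H^{k-1}$ regularity.

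For $F_{i1}$, continuity into $H^{k-1}$ follows from the algebra property of $H^{k-1}$ (since $k-1\geq 2$) combined with the Neumann expansion above. For $F_{i3}$, the separation condition $d>2(b_1+b_2)$ together with $|\varepsilon|<1/2$ and smallness of $p_i$ ensures that the cross-patch denominator is uniformly bounded below, so the kernels are smooth in both variables. The apparent $1/\varepsilon$ singularity in $F_{i31}$ is resolved by Taylor-expanding the denominator around $\varepsilon = 0$: the zeroth-order contribution vanishes because $\int_0^{2\pi}\sin(x-y)\dy = 0$, and all remaining terms carry compensating powers of $\varepsilon$. The use of $|\varepsilon|^\alpha$ and $\varepsilon|\varepsilon|^\alpha$ throughout \eqref{2-3} is precisely what makes the functional extend continuously to $\varepsilon \in (-1/2, 1/2)$ across zero.

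The main obstacle is the genuinely singular term $F_{i2}$. Here I would first establish a denominator lower bound of the form $D(x,y)\geq c\,\sin^2((x-y)/2)$ valid for $p_i\in B_r$, where $D$ denotes the expression inside the $|\cdot|^{\alpha/2}$ factor of \eqref{2-2}. This reduces the analysis, at $\alpha=1$, to kernels controlled by $|\sin((x-y)/2)|^{-1}$. The subterms $F_{i21}$ and $F_{i23}$ carry explicit factors ($\varepsilon|\varepsilon|^\alpha p_i$ or $p_i(x)-p_i(y)$) that compensate the singularity, while $F_{i24}$ gains an $\varepsilon|\varepsilon|^\alpha$ prefactor. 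The critical piece is $F_{i22}=C_\alpha \gamma_i \int (p_i'(y)-p_i'(x))\cos(x-y)/D^{1/2}\dy$, which after $(k-1)$ differentiations in $x$ reduces, via the Leibniz rule and integration by parts in $y$, to an expression whose $L^2_x$-norm is precisely controlled by the logarithmic-loss seminorm $\bigl\Vert\int_0^{2\pi}(\partial^k p_i(x-y)-\partial^k p_i(x))/|\sin(y/2)|\dy\bigr\Vert_{L^2}$ built into the definition of $X^{k+\log}$. This is the analog of the delicate commutator estimate in the symmetric case treated in \cite{cao,Cas1}, and it is the step where the specific choice of the space $X^{k+\log}$ becomes essential at $\alpha=1$. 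Continuity in all variables then follows from the multilinear and locally Lipschitz structure of the resulting estimates.
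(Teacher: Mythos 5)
Your proposal is correct and follows essentially the same route as the paper: the same decomposition $F_i^1=F_{i1}+F_{i2}+F_{i3}$, the same parity argument to land in $Y^{k-1}$, the same Taylor-expansion desingularization of the $1/\varepsilon$ prefactors in $F_{i21}$ and $F_{i31}$ (with the zeroth-order term killed by oddness of $\sin(x-y)$ and the cross-patch denominator bounded below by the separation condition), and the same identification of $F_{i22}$ as the critical term whose $\partial^{k-1}$ derivative is controlled exactly by the logarithmic seminorm defining $X^{k+\log}$.
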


\begin{proof}
By applying \eqref{2-1}, we can rewrite $F_{i1}:\left(-\frac{1}{2}, \frac{1}{%
2}\right)\times \mathbb{R}\times \mathbb{R} \times B_r \rightarrow Y^{k-1}$
as $p_1$ as
\begin{equation}  \label{2-4}
F_{i1}=-\Omega \left((-1)^i\sin(x)+(i-1)d\sin(x)+\varepsilon|\varepsilon|%
\mathcal{R}_1(\varepsilon,p_1,p_2)\right),
\end{equation}
where $\mathcal{R}_1(\varepsilon,p_1,p_2): \left(-\frac{1}{2}, \frac{1}{2}%
\right)\times \mathbb{R} \times \mathbb{R} \times B_r \rightarrow Y^{k-1}$
is continuous. From this, we can conclude that $F_{i1}$ is continuous.

Now, we focus on $F_{i2}$ given in \eqref{2-2}. First we prove that the
range of $F_{i2}$ belongs to $Y^{k-1}$. Notice that $p_{i}(x)\in B_{r}$ is
an even function, for $i=1,2$, then we can easily deduce that $p_{i}^{\prime
}(x)$ is an odd function, for $i=1,2$, since $b_{i}\in \mathbb{R}^{+},$ for $%
i=1,2$. Now, by replacing $y$ with $-y$ in $F_{i2}$, one can ensures that $%
F_{i2}(\varepsilon ,p_{1},p_{2})$ is an odd function.

Recall that $R_{i}(x)=1+\varepsilon |\varepsilon |b_{i}^{2}p_{i}(x)$, for $%
i=1,2$, then, we observe that the possible singularity could be happen only
when we take zeroth derivative of $F_{i21}$ for $\varepsilon =0$. To fix it,
we invoke the Taylor formula
\begin{equation}
\frac{1}{(A+B)^{\lambda }}=\frac{1}{A^{\lambda }}-\lambda \int_{0}^{1}\frac{B%
}{(A+tB)^{1+\lambda }}dt  \label{2-5}
\end{equation}%
with
\begin{equation*}
A:=4\sin ^{2}\left( \frac{x-y}{2}\right)
\end{equation*}%
and
\begin{equation*}
B:=|\varepsilon |^{4}b_{i}^{4}(p_{i}(x)-p_{i}(y))^{2}+\sin ^{2}\left( \frac{%
x-y}{2}\right) \left( 4\varepsilon |\varepsilon
|b_{i}^{2}p_{i}(x)+4\varepsilon |\varepsilon
|b_{i}^{2}p_{i}(y)+4|\varepsilon |^{4}b_{i}^{4}p_{i}(x)p_{i}(y)\right) ,
\end{equation*}%
which leads us to
\begin{equation*}
\begin{split}
& \frac{1}{\left( |\varepsilon |^{4}b_{i}^{4}\left( p_{i}(x)-p_{i}(y)\right)
^{2}+4(1+\varepsilon |\varepsilon |b_{i}^{2}p_{i}(x))(1+\varepsilon
|\varepsilon |b_{i}^{2}p_{i}(y))\sin ^{2}\left( \frac{x-y}{2}\right) \right)
^{\frac{1}{2}}} \\
& \qquad \qquad \qquad \qquad \qquad \qquad \qquad \qquad
\qquad \qquad =\frac{1}{\left( 4\sin ^{2}\left( \frac{x-y}{2}\right) \right)
^{1/2}}+\varepsilon |\varepsilon |b_{i}^{2}K_{\varepsilon }(x,y).
\end{split}%
\end{equation*}%
An important fact on this decomposition is that $K_{\varepsilon }(x,y)$ is
not singular at $x=y$. Since the sine function is odd, one can concludes
\begin{equation}
F_{i21}=\frac{\gamma _{i}}{4}\int \!\!\!\!\!\!\!\!\!\;{}-{}\frac{%
p_{i}(y)\sin (x-y)dy}{\left\vert \sin (\frac{x-y}{2})\right\vert ^{1/2}}%
+\varepsilon |\varepsilon |\mathcal{R}_{11}(\varepsilon ,p_{1},p_{2}),
\label{F21}
\end{equation}%
where $\mathcal{R}_{11}(\varepsilon ,p_{1},p_{2})$ is not singular in $%
\varepsilon $. We treat first the term $F_{i2}$. We deal with the most
singular term $F_{i22}$. To this end, considering the change $x-y\rightarrow
y$, computing the derivatives $\partial ^{k-1}$ of $F_{i22}$ w.r.t. the
spatial variable $x$, and then changing back $y\rightarrow x-y,$ we get
\begin{equation*}
\begin{split}
& \partial ^{k-1}F_{i22}=\gamma _{i}\int \!\!\!\!\!\!\!\!\!\;{}-{}\frac{%
(\partial ^{k}p_{i}(y)-\partial ^{k}p_{i}(x))\cos (x-y)dy}{\left(
|\varepsilon |^{4}b_{i}^{4}\left( p_{i}(x)-p_{i}(y)\right)
^{2}+4(1+\varepsilon |\varepsilon |b_{i}^{2}p_{i}(x))(1+\varepsilon
|\varepsilon |b_{i}^{2}p_{i}(y))\sin ^{2}\left( \frac{x-y}{2}\right) \right)
^{\frac{1}{2}}} \\
& \ \ \ \ -\gamma _{i}\varepsilon |\varepsilon |b_{i}^{2}\int
\!\!\!\!\!\!\!\!\!\;{}-{}\frac{\cos (x-y)}{\left( |\varepsilon
|^{4}b_{i}^{4}\left( p_{i}(x)-p_{i}(y)\right) ^{2}+4(1+\varepsilon
|\varepsilon |b_{i}^{2}p_{i}(x))(1+\varepsilon |\varepsilon
|b_{i}^{2}p_{i}(y))\sin ^{2}\left( \frac{x-y}{2}\right) \right) ^{\frac{3}{2}%
}} \\
& \ \ \ \ {\resizebox{.96\hsize}{!}{$\times \left(\varepsilon |\varepsilon|
b_i^2(p_i(x)-p_i(y))(p'_i(x)-p'_i(y))+2((1+\varepsilon|\varepsilon|
b_i^2p_i(x))p'_i(y)+(1+\varepsilon|\varepsilon|
b_i^2p_i(y))p'_i(x))\sin^2(\frac{x-y}{2})\right)$}} \\
& \ \ \ \ \times (\partial ^{k-1}p_{i}(y)-\partial ^{k-1}p_{i}(x))dy+l.o.t,
\end{split}%
\end{equation*}%
where $l.o.t$ stands for lower order terms. Notice that $\Vert \partial
^{j}p_{i}\Vert _{L^{\infty }}\leq C\Vert p_{i}\Vert _{X^{k+\log }}<\infty $
for $j=0,1,2$, since $p_{i}(x)\in X^{k+\log }$, for $k\geq 3$. Now, by
combining H\"{o}lder inequality and mean value theorem, we arrive at
\begin{equation*}
\begin{split}
\left\Vert \partial ^{k-1}F_{i22}\right\Vert _{L^{2}}& \leq C\left\Vert \int
\!\!\!\!\!\!\!\!\!\;{}-{}\frac{\partial ^{k}p_{i}(x)-\partial ^{k}p_{i}(y)}{%
|\sin (\frac{x-y}{2})|}dy\right\Vert _{L^{2}}+C\left\Vert \int
\!\!\!\!\!\!\!\!\!\;{}-{}\frac{\partial ^{k-1}p_{i}(x)-\partial
^{k-1}p_{i}(y)}{|\sin (\frac{x-y}{2})|}dy\right\Vert _{L^{2}} \\
& \leq C\Vert p_{i}\Vert _{X^{k+\log }}+C\Vert p_{i}\Vert _{X_{\log
}^{k-1}}<\infty .
\end{split}%
\end{equation*}%
Now, note that $F_{i23}$ is less singular than $F_{i22}$. Hence, it is
straightforward to obtain an upper bound for $\left\Vert \partial
^{k-1}F_{i23}\right\Vert _{L^{2}}$. We focus now on the last term $F_{i24}$,
and take $k-1$ derivatives with respect to the spatial variable $x$ to get%
\begin{equation*}
\begin{split}
& {\resizebox{.99\hsize}{!}{$
\partial^{k-1}F_{i24}=\frac{-\gamma_i\varepsilon^4
b_i^4\,\partial^{k-1}p_i(x)}{(1+\varepsilon|\varepsilon|b_i^2
p_i(x))^2}\int\!\!\!\!\!\!\!\!\!\;
{}-{}\frac{p'_i(x)p'_i(y)\sin(x-y)dy}{\left(
|\varepsilon|^{4}b_i^4\left(p_i(x)-p_i(y)\right)^2+4(1+\varepsilon|%
\varepsilon| b_i^2 p_i(x))(1+\varepsilon|\varepsilon| b_i^2
p_i(y))\sin^2\left(\frac{x-y}{2}\right)\right)^{\frac{1}{2}}}$}} \\
&
{\resizebox{.97\hsize}{!}{$\quad +\frac{\gamma _{i}\varepsilon |\varepsilon |b_{i}^{2}}{1+\varepsilon
|\varepsilon |b_{i}^{2}p_{i}(x)}\int \!\!\!\!\!\!\!\!\!\;{}-{}\frac{\sin
(x-y)}{\left( |\varepsilon |^{4}b_{i}^{4}\left( p_{i}(x)-p_{i}(y)\right)
^{2}+4(1+\varepsilon |\varepsilon |b_{i}^{2}p_{i}(x))(1+\varepsilon
|\varepsilon |b_{i}^{2}p_{i}(y))\sin ^{2}\left( \frac{x-y}{2}\right) \right)
^{\frac{1}{2}}}$}} \\
& \qquad\qquad\qquad \times (p_{i}^{\prime }(x)\partial ^{k}p_{i}(y)+\partial
^{k}p_{i}(x)p_{i}^{\prime }(y))dy \\
& \quad {\resizebox{.96\hsize}{!}{$-\frac{2\gamma _{i}\varepsilon ^{4}b_{i}^{4}}{1+\varepsilon
|\varepsilon |b_{i}^{2}p_{i}(x)}\int \!\!\!\!\!\!\!\!\!\;{}-{}\frac{\sin
(x-y)}{\left( |\varepsilon |^{4}b_{i}^{4}\left( p_{i}(x)-p_{i}(y)\right)
^{2}+4(1+\varepsilon |\varepsilon |b_{i}^{2}p_{i}(x))(1+\varepsilon
|\varepsilon |b_{i}^{2}p_{i}(y))\sin ^{2}\left( \frac{x-y}{2}\right) \right)
^{\frac{3}{2}}}$}} \\
&
\quad {\resizebox{.98\hsize}{!}{$\times \left(\varepsilon |\varepsilon|
b_i^2(p_i(x)-p_i(y))(p'_i(x)-p'_i(y))+2((1+\varepsilon|\varepsilon|
b_i^2p_i(x))p'_i(y)+(1+\varepsilon|\varepsilon|
b_i^2p_i(y))p'_i(x))\sin^2(\frac{x-y}{2})\right)$}} \\
&
\qquad \qquad \times (p_{i}^{\prime }(x)\partial ^{k-1}p_{i}(y)+\partial
^{k-1}p_{i}(x)p_{i}^{\prime }(y))+l.o.t.
\end{split}%
\end{equation*}%
By applying the definition of the space $X^{k+\log }$, one can obtain the
following upper bound
\begin{equation*}
\begin{split}
&{\resizebox{.96\hsize}{!}{$\left\Vert \partial ^{k-1}F_{i24}\right\Vert _{L^{2}} \leq C\varepsilon
|\varepsilon |\left( \varepsilon |\varepsilon |\Vert p_{i}^{\prime }\Vert
_{L^{\infty }}^{2}\Vert \partial ^{k-1}p_{i}\Vert _{L^{2}}+\Vert
p_{i}^{\prime }\Vert _{L^{\infty }}\Vert \partial ^{k}p_{i}\Vert
_{L^{2}}+\varepsilon |\varepsilon |\Vert p_{i}\Vert _{L^{\infty }}\Vert
p_{i}^{\prime }\Vert _{L^{\infty }}^{2}\Vert \partial ^{k-1}p_{i}\Vert
_{L^{2}}\right)$}} \\
&\qquad \qquad\quad \leq C\varepsilon |\varepsilon |\Vert p_{i}\Vert _{X^{k+\log }}<\infty .
\end{split}%
\end{equation*}%
From the above computations we can conclude that the functional $F_{i2}$ belongs to $Y^{k-1}$.

We now move on to verify the continuity of $F_{i2}$. Once again we only
treat with the most singular term $F_{i22}$. Now, we shall use the following
notation. For a general function $g_{i}(x)$, we define
\begin{equation*}
\Delta g_{i}=g_{i}(x)-g_{i}(y),\ \mbox{where}\ g_{i}=g_{i}(x),\ \mbox{and}%
\tilde{g}_{i}=g_{i}(y),\,\mbox{for}\,i=1,2
\end{equation*}%
and
\begin{equation*}
D_{\alpha }(g_{i})=b_{i}^{2+2\alpha }\varepsilon ^{2+2\alpha }\Delta
g_{i}^{2}+4(1+\varepsilon |\varepsilon |^{\alpha }b_{i}^{1+\alpha
}g_{i})(1+\varepsilon |\varepsilon |^{\alpha }b_{i}^{1+\alpha }\tilde{g}%
_{i})\sin ^{2}\left( \frac{x-y}{2}\right) .
\end{equation*}%
Then for $p_{i1},p_{i2}\in B_{r}$, $i=1,2,$ we have
\begin{equation*}
\begin{split}
F_{i22}(\varepsilon ,p_{i1})& -F_{i22}(\varepsilon ,p_{i2})=\gamma _{i}\int
\!\!\!\!\!\!\!\!\!\;{}-{}\frac{(\Delta p_{i1}^{\prime }-\Delta
p_{i2}^{\prime })\cos (x-y)dy}{D_{1}(f_{i1})^{\frac{1}{2}}} \\
& +\left( \gamma _{i}\int \!\!\!\!\!\!\!\!\!\;{}-{}\frac{\Delta
p_{i2}^{\prime }\cos (x-y)dy}{D_{1}(f_{i1})^{\frac{1}{2}}}-\gamma _{i}\int
\!\!\!\!\!\!\!\!\!\;{}-{}\frac{\Delta p_{i2}^{\prime }\cos (x-y)dy}{%
D_{1}(f_{i2})^{\frac{1}{2}}}\right) \\
& =I_{1}+I_{2}.
\end{split}%
\end{equation*}%
It is straightforward to see that $I_{1}$ has the following upper bound $%
\Vert I_{1}\Vert _{Y^{k-1}}\leq C\Vert p_{i1}-p_{i2}\Vert _{X^{k+\log }}$.
To estimate $I_{2}$, we can apply the mean value theorem as follows
\begin{equation}
\begin{split}
& \frac{1}{D_{\alpha }(p_{i1})^{\frac{\alpha }{2}}}-\frac{1}{D_{\alpha
}(p_{i2})^{\frac{\alpha }{2}}}=\frac{\alpha }{2}\frac{D_{\alpha
}(p_{i2})-D_{\alpha }(p_{i1})}{D_{\alpha }(\delta _{x,y}p_{i1}+(1-\delta
_{x,y})p_{i2})^{1-\frac{\alpha }{2}}D_{\alpha }(p_{i1})^{\frac{\alpha }{2}%
}D_{\alpha }(p_{i2})^{\frac{\alpha }{2}}} \\
& {\resizebox{.98\hsize}{!}{$=\frac{\alpha}{2}\frac{b_i^{2+2\alpha}|%
\varepsilon|^{2+2\alpha}(\Delta p_{i2}^2-\Delta
p_{i1}^2)+4\varepsilon|\varepsilon|^\alpha b^{1+\alpha}_i
((p_{i2}-p_{i1})(1+\varepsilon|\varepsilon|^\alpha b^{1+\alpha}_i \tilde
p_{i2})+(\tilde p_{i2}-\tilde p_{i1})(1+\varepsilon|\varepsilon|^\alpha
b^{1+\alpha}_i
p_{i1}))\sin^2(\frac{x-y}{2})}{D_\alpha(\delta_{x,y}p_{i1}+(1-%
\delta_{x,y})p_{i2})^{1-\frac{\alpha}{2}}D_\alpha(p_{i1})^\frac{\alpha}{2}D_%
\alpha(p_{i2})^\frac{\alpha}{2}}$}},
\end{split}
\label{2-7}
\end{equation}%
for some $\delta _{x,y}\in (0,1)$. Set $\alpha =1$, then $D_{1}(g)\sim \sin
^{2}(\frac{x-y}{2})\sim |x-y|^{2}/4$ as $|x-y|\rightarrow 0$ and
\begin{equation*}
\begin{split}
& {\resizebox{.9\hsize}{!}{$ \partial^{k-1}I_2\sim C\int\!\!\!\!\!\!\!\!\!\;
{}-{}\frac{\partial^{k-1}p_{i2}(x)-\partial^{k-1}p_{i2}(y)}{|\sin(%
\frac{x-y}{2})|^\alpha}\times \left(\frac{|\varepsilon|^4b_i^4(\Delta
p_{i2}^2-\Delta
p_{i1}^2)}{|x-y|^2}+4\varepsilon|\varepsilon|b_i^2(p_{i2}-p_{i1}+\tilde
p_{i2}-\tilde p_{i1})\right)dy$}} \\
& \qquad \qquad \qquad +l.o.t.
\end{split}%
\end{equation*}%
Hence, we can easily deduce that $\Vert I_{2}\Vert _{Y^{k-1}}\leq C\Vert
p_{i1}-p_{i2}\Vert _{X^{k+\log }}$. With that in hand, we can conclude that $%
F_{i2}(\varepsilon ,p_{1},p_{2}):\left( -\frac{1}{2},\frac{1}{2}\right)
\times B_{r}\rightarrow Y^{k-1}$ is continuous. Now, one can proceed as
before in applying Taylor formula on $F_{i22}$ and $F_{i23}$ to arrive at
\begin{equation}
\begin{split}
F_{i2}& =\frac{\gamma _{i}}{4}\int \!\!\!\!\!\!\!\!\!\;{}-{}\frac{%
p_{i}(y)\sin (x-y)dy}{|\sin (\frac{x-y}{2})|}+\frac{\gamma _{i}}{2}\int
\!\!\!\!\!\!\!\!\!\;{}-{}\frac{(p_{i}^{\prime }(y)-p_{i}^{\prime }(x))\cos
(x-y)dy}{|\sin (\frac{x-y}{2})|}+\varepsilon |\varepsilon |\mathcal{R}%
_{2}(\varepsilon ,p_{1},p_{2}) \\
& =\frac{\gamma _{i}}{4}\int \!\!\!\!\!\!\!\!\!\;{}-{}\frac{p_{i}(x-y)\sin
(y)dy}{|\sin (\frac{y}{2})|}-\frac{\gamma _{i}}{2}\int
\!\!\!\!\!\!\!\!\!\;{}-{}\frac{(p_{i}^{\prime }(x)-p_{i}^{\prime }(x-y))\cos
(y)dy}{|\sin (\frac{y}{2})|}+\varepsilon |\varepsilon |\mathcal{R}%
_{2}(\varepsilon ,p_{1},p_{2}),
\end{split}
\label{2-8}
\end{equation}%
where $\mathcal{R}_{2}(\varepsilon ,p_{1},p_{2}):\left( -\frac{1}{2},\frac{1%
}{2}\right) \times B_{r}\rightarrow Y^{k-1}$ is also continuous.

Now, we focus on the last component of the functional $F_{i}^{1}$ which is $%
F_{i3}$. Once again to manage the singularities at $\varepsilon =0$, we
resort to the Taylor formula (\ref{2-5}) on the element $F_{i31}$ since it
is the most delicate term. Define
\begin{equation*}
A=d^{2}
\end{equation*}%
and
\begin{equation*}
B=-2d\left( b_{3-i}R_{3-i}(y)\cos (y)+b_{i}R_{i}(x)\cos (x)\right) .
\end{equation*}%
From \eqref{2-3}, we can arrive at%
\begin{equation}
\begin{split}
& F_{i31}=\frac{\gamma _{3-i}(1+\varepsilon |\varepsilon
|b_{3-i}^{2}p_{3-i}(x))}{\varepsilon b_{3-i}(1+\varepsilon |\varepsilon
|b_{i}^{2}p_{i}(x))}\int \!\!\!\!\!\!\!\!\!\;{}-{}\frac{\sin (x-y)dy}{A^{%
\frac{1}{2}}} \\
& \qquad -\frac{1}{2}\frac{\gamma _{3-i}(1+\varepsilon |\varepsilon
|b_{3-i}^{2}p_{3-i}(x))}{b_{3-i}(1+\varepsilon |\varepsilon
|b_{i}^{2}p_{i}(x))}\int \!\!\!\!\!\!\!\!\!\;{}-{}\int_{0}^{1}\frac{%
(B+O(\varepsilon ))\sin (x-y)dtdy}{\left( A+\varepsilon tB+O(\varepsilon
^{2})\right) ^{\frac{3}{2}}} \\
& \qquad \qquad +\frac{\gamma _{3-i}(1+\varepsilon |\varepsilon
|b_{3-i}^{2}p_{3-i}(x))}{b_{3-i}(1+\varepsilon |\varepsilon
|b_{i}^{2}p_{i}(x))}\int \!\!\!\!\!\!\!\!\!\;{}-{}\frac{|\varepsilon
|b_{3-i}^{2}p_{3-i}(y)\sin (x-y)dy}{\left( A+\varepsilon B+O(\varepsilon
^{2})\right) ^{\frac{1}{2}}} \\
& =-\frac{1}{2}\frac{\gamma _{3-i}}{b_{3-i}}\int
\!\!\!\!\!\!\!\!\!\;{}-{}\int_{0}^{1}\frac{B\sin (x-y)dtdy}{\left(
A+\varepsilon tB\right) ^{\frac{3}{2}}}+\varepsilon \mathcal{R}%
_{31}(\varepsilon ,p_{1},p_{2}) \\
& =-\frac{1}{2}\frac{\gamma _{3-i}}{b_{3-i}}\int \!\!\!\!\!\!\!\!\!\;{}-{}%
\frac{B\sin (x-y)dy}{A^{\frac{3}{2}}}+\frac{3}{4}\frac{\gamma _{3-i}}{b_{3-i}%
}\int \!\!\!\!\!\!\!\!\!\;{}-{}\int_{0}^{1}\int_{0}^{1}\frac{(\varepsilon
tB)B\sin (x-y)dsdtdy}{\left( A+\varepsilon stB\right) ^{\frac{5}{2}}}%
+\varepsilon \mathcal{R}_{31}(\varepsilon ,p_{1},p_{2}) \\
& =-\frac{1}{2}\frac{\gamma _{3-i}}{b_{3-i}}\int \!\!\!\!\!\!\!\!\!\;{}-{}%
\frac{B\sin (x-y)dy}{A^{\frac{3}{2}}}+\varepsilon \mathcal{R}%
_{31}(\varepsilon ,p_{1},p_{2}) \\
& =\frac{\gamma _{3-i}}{b_{3-i}d^{2}}\int
\!\!\!\!\!\!\!\!\!\;{}-{}b_{3-i}\cos (y)\sin (x-y)dy+\frac{\gamma
_{3-i}b_{i}\cos (x)}{b_{3-i}d^{2}}\int \!\!\!\!\!\!\!\!\!\;{}-{}\sin
(x-y)dy+\varepsilon \mathcal{R}_{31}(\varepsilon ,p_{1},p_{2}) \\
& =\frac{\gamma _{3-i}}{d^{2}}\int \!\!\!\!\!\!\!\!\!\;{}-{}\cos (y)\sin
(x-y)dy+\varepsilon \mathcal{R}_{31}(\varepsilon ,p_{1},p_{2}) \\
& =\frac{\gamma _{3-i}}{2d^{2}}\sin (x)+\varepsilon \mathcal{R}%
_{31}(\varepsilon ,p_{1},p_{2}),
\end{split}
\label{fi3eps}
\end{equation}%
where we used in some lines the fact that the sine function is odd. Here $%
\mathcal{R}_{31}(\varepsilon ,p_{1},p_{2})$ is not singular in $\varepsilon $%
. In other words, one can conclude that
\begin{equation}
F_{i3}=\frac{\gamma _{3-i}\sin (x)}{2d^{2}}+\varepsilon \mathcal{R}%
_{3}(\varepsilon ,p_{1},p_{2}),  \label{Fi3}
\end{equation}%
where $\mathcal{R}_{3}(\varepsilon ,p_{1},p_{2}):\left( -\frac{1}{2},\frac{1%
}{2}\right) \times B_{r}\rightarrow Y^{k-1}$ is continuous.

Notice that for $z_i\in \partial \tilde{D}_{1}^{\varepsilon }$, we have that
\begin{equation*}
\left\vert (\varepsilon b_{3-i}R_{3-i}(y)\cos (y)+\varepsilon
b_{i}R_{i}(x)\cos (x)-d)^{2}+(\varepsilon b_{3-i}R_{3-i}(y)\sin
(y)+\varepsilon b_{i}R_{i}(x)\sin (x))^{2}\right\vert
\end{equation*}%
has positive lower bounds. Then, we can deduce that $F_{i3}$ is less
singular than $F_{i2}$ and we also have that $F_{i3}(\varepsilon
,p_{1},p_{2}):\left( -\frac{1}{2},\frac{1}{2}\right) \times B_{r}\rightarrow
Y^{k-1}$ is continuous. With these observations, one can concludes that the
nonlinear functional $F_{i}^{1}(\varepsilon ,\Omega
,\bar{x},p_{1},p_{2}):\left( -\frac{1}{2},\frac{1}{2}\right) \times \mathbb{R}%
\times \mathbb{R}\times B_{r}\rightarrow Y^{k-1}$ is continuous.
\end{proof}

For $(\varepsilon ,\Omega ,\bar{x},p_{1},p_{2})\in \left( -\frac{1}{2},\frac{1%
}{2}\right) \times \mathbb{R}\times \mathbb{R}\times B_{r}$ and $%
h_{i},h_{3-i}\in X^{k+\log }$, let
\begin{equation*}
\partial _{p_{i}}F_{i}^{1}(\varepsilon ,\Omega
,\bar{x},p_{1},p_{2})h_{i}:=\lim\limits_{t\rightarrow 0}\frac{1}{t}\left(
F_{i}^{1}(\varepsilon ,\Omega ,\bar{x},p_{i}+th_{i})-F_{i}^{1}(\varepsilon
,\bar{x},\Omega ,p_{1},p_{2})\right),
\end{equation*}%
and
\begin{equation*}
\partial _{p_{3-i}}F_{i}^{1}(\varepsilon ,\Omega
,\bar{x},p_{1},p_{2})h_{3-i}:=\lim\limits_{t\rightarrow 0}\frac{1}{t}\left(
F_{i}^{1}(\varepsilon ,\Omega ,\bar{x},p_{3-i}+th_{3-i})-F_{i}^{1}(\varepsilon
,\bar{x},\Omega ,p_{1},p_{2})\right)
\end{equation*}%
be the Gateaux derivative of $F_{i}^{1}(\varepsilon,\Omega,\bar{x}
,p_{1},p_{2})$ on the direction $h_i$ and $h_{3-i}$, respectively. In the following lemma, we compute the Gateaux derivative of
the nonlinear functional $F_{i}^{1}$ and then we prove that it is in fact
continuous. In other words, we prove that $F_{i}^{1}$ is a $C^{1}$ function.

\begin{lemma}
\label{lem2-3}
For each $(\varepsilon ,\Omega ,\bar{x},p_{1},p_{2})\in \left( -\frac{1}{2},%
\frac{1}{2}\right) \times \mathbb{R}\times \mathbb{R}\times B_{r}$, the
partial derivatives of the functional $F$ that are given by $\partial _{p_{3-i}}F_{i}^{1}(\varepsilon
,\Omega,\bar{x} ,p_{1},p_{2})h_{3-i}$, $\partial _{p_{i}}F_{i}^{1}(\varepsilon
,\Omega,\bar{x} ,p_{1},p_{2})h_{3-i},$ $\partial _{\Omega
}F_{i}^{1}(\varepsilon,\Omega,\bar{x} ,p_{1},p_{2})$ and
$\partial
_{\bar{x}}F_{i}^{1}(\varepsilon,\Omega,\bar{x} ,p_{1},p_{2})$ exist and are
continuous.
\end{lemma}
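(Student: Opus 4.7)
The plan is to exploit the decomposition $F_i^1 = F_{i1} + F_{i2} + F_{i3}$ from \eqref{2-1}--\eqref{2-3} and treat each of the four partial derivatives separately. Since $\Omega$ and $\bar{x}$ appear only in $F_{i1}$, entering through elementary rational combinations of $p_i(x)$, $p_i'(x)$, $\cos x$, $\sin x$, I would differentiate \eqref{2-1} termwise to obtain explicit expressions that are continuous functions of $(\varepsilon, \Omega, \bar{x}, p_1, p_2)$ into $Y^{k-1}$, using the Sobolev embedding $H^k \hookrightarrow C^1$ (valid since $k \geq 3$) to control all pointwise products. This yields existence and continuity of $\partial_\Omega F_i^1$ and $\partial_{\bar{x}} F_i^1$ without further work.

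For the Gateaux derivative in the direction $h_i \in X^{k+\log}$, only $F_{i2}$ contributes a genuinely singular part. I would compute
\[
\partial_{p_i} F_{i2}(\varepsilon, p_i) h_i = \lim_{t \to 0} \frac{1}{t}\bigl(F_{i2}(\varepsilon, p_i + t h_i) - F_{i2}(\varepsilon, p_i)\bigr)
\]
by differentiating each of the four integrands in \eqref{2-2} at $p_i$ in the direction $h_i$. The leading contribution arises from $F_{i22}$ and takes the form
\[
\partial_{p_i} F_{i22}(\varepsilon, p_i) h_i = \gamma_i \int\!\!\!\!\!\!\!\!\!\;{}-{} \frac{(h_i'(y) - h_i'(x))\cos(x-y)}{D_1(p_i)^{1/2}}\, dy + \mathcal{R}(\varepsilon, p_i, h_i),
\]
where $\mathcal{R}$ collects the terms from differentiating the denominator and carries an extra prefactor $\varepsilon|\varepsilon|$, which renders it strictly less singular than the leading piece. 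Existence of the limit follows by dominated convergence together with the uniform bound on $(D_1(p_i + t h_i)^{-1/2} - D_1(p_i)^{-1/2})/t$ provided by the Taylor identity \eqref{2-5}, and the resulting expression belongs to $Y^{k-1}$ via the same $L^2$ estimate applied to $F_{i22}$ in Lemma \ref{lem2-2}, with $\partial^k p_i$ replaced by $\partial^k h_i$.

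The hard part, and the main technical obstacle, is the continuity of $\partial_{p_i} F_{i2}$ in $p_i$. The plan is to reuse the mean value identity \eqref{2-7} applied to $D_1(p_i)^{-1/2}$: writing $\partial_{p_i} F_{i22}(\varepsilon, p_{i1})h_i - \partial_{p_i} F_{i22}(\varepsilon, p_{i2}) h_i$ as a single integral, the denominator difference can be linearised to produce a schematic kernel of the form
\[
\frac{(h_i'(y)-h_i'(x)) \cdot \varepsilon|\varepsilon| b_i^2 \bigl(\Delta(p_{i2}^2 - p_{i1}^2) + \ldots\bigr)}{|\sin(\frac{x-y}{2})|\cdot |x-y|^2}
\]
plus lower-order terms; the factor $\varepsilon|\varepsilon|\, |x-y|^{-2} \Delta(p_{i2}^2 - p_{i1}^2)$ is bounded pointwise by $\|p_{i1} - p_{i2}\|_{X^{k+\log}}$, while the remaining $|\sin(\frac{x-y}{2})|^{-1}$ is absorbed by the log-type seminorm of $h_i$. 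After taking $k-1$ spatial derivatives and applying Leibniz's rule, the top-order term is controlled exactly as $F_{i22}$ was in Lemma \ref{lem2-2}, and the lower-order pieces follow from H\"older's inequality; continuity in $\varepsilon$ then follows by the same dominated convergence argument.

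Finally, for the cross-derivative $\partial_{p_{3-i}} F_i^1 h_{3-i}$, only $F_{i3}$ contributes, and because the denominators in \eqref{2-3} are bounded below by a positive constant (the patches remain separated since $d > 2(b_1+b_2)\varepsilon$ for small $\varepsilon$), the derivative is a regular integral operator in $h_{3-i}$ whose continuity in all four arguments is immediate from dominated convergence and smooth dependence of the integrands. The analogous derivative of $F_{i3}$ with respect to $p_i$ is simpler still, since $p_i$ enters $F_{i3}$ only through the bounded prefactor $(1+\varepsilon|\varepsilon|b_i^2 p_i(x))^{-1}$ and through $R_i(x)$ inside the non-singular denominator. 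Combining the four pieces yields the claim.
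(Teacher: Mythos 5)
Your proposal follows essentially the same route as the paper: decompose $F_i^1=F_{i1}+F_{i2}+F_{i3}$, dispose of $\partial_\Omega$ and $\partial_{\bar x}$ by the linear dependence of $F_{i1}$ on those parameters, isolate $F_{i22}$ as the only genuinely singular contribution to $\partial_{p_i}F_i^1 h_i$, verify the Gateaux limit via the Taylor identity \eqref{2-5} and the $X^{k+\log}$ seminorm estimate from Lemma \ref{lem2-2}, establish continuity through the mean value identity \eqref{2-7}, and treat $\partial_{p_{3-i}}F_i^1h_{3-i}$ as a regular operator coming only from $F_{i3}$ thanks to the lower bound on the interaction denominator. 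The only cosmetic difference is that the paper makes the convergence of the difference quotient quantitative (an explicit $O(t)$ bound in $Y^{k-1}$ via the decomposition $G_{i21}+G_{i22}$) where you invoke dominated convergence, but the underlying estimates you cite are the same ones that produce that bound.
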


\begin{proof}
First of all, notice that the most delicate part is to guarantee the
existence and continuity of the Gateaux derivatives $\partial
_{p_{3-i}}F_{i}^{1}(\varepsilon,\Omega,\bar{x} ,p_{1},p_{2})h_{3-i}$, $%
\partial _{p_{i}}F_{i}^{1}(\varepsilon,\Omega,\bar{x} ,p_{1},p_{2})h_{3-i}$
on the direction $h_{i}$ and $h_{3-i}$. Notice that functional $F^{1}_i$ is
linearly dependent on $\Omega $ and $\bar{x}$. Then, it is easy to compute
their derivatives with respect to $\Omega $ and $\bar{x}$ and its proof is
left to the reader. From (\ref{2-1}), it is easy to see that the first term
of the Gateaux derivative with respect to $p_{i}$ is
\begin{equation}
\partial _{p_{i}}F_{i1}(\varepsilon,\Omega,\bar{x} ,p_{1},p_{2})h_{i}=\Omega
\varepsilon |\varepsilon |\partial _{p_{i}}\mathcal{R}_{1}(\varepsilon
,p_{1},p_{2})h_{i},  \label{2-10}
\end{equation}%
which is continuous, where $\mathcal{R}_{1}(\varepsilon ,p_{1},p_{2})$ is
given in (\ref{2-4}). Similarly, for the derivative $\partial _{p_{3-i}}$ we
have
\begin{equation}
\partial _{p_{3-i}}F_{i1}(\varepsilon,\Omega,\bar{x} ,p_{1},p_{2})h_{3-i}=0.
\label{2-10b}
\end{equation}%
Next, we claim that $\partial _{p_{i}}F_{i2}(\varepsilon
,p_{1},p_{2})h_{i}=\partial _{p_{i}}F_{i21}h_{i}+\partial
_{p_{i}}F_{i22}h_{i}+\partial _{p_{i}}F_{i23}h_{i}+\partial
_{p_{i}}F_{i24}h_{i}$ is continuous, where%
\begin{equation}
\begin{split}
& \partial _{p_{i}}F_{i21}h_{i}=\gamma _{i}\int \!\!\!\!\!\!\!\!\!\;{}-{}%
\frac{h_{i}(y)\sin (x-y)dy}{\left( |\varepsilon |^{4}b_{i}^{4}\left(
p_{i}(x)-p_{i}(y)\right) ^{2}+4(1+\varepsilon |\varepsilon
|b_{i}^{2}p_{i}(x))(1+\varepsilon |\varepsilon |b_{i}^{2}p_{i}(y))\sin
^{2}\left( \frac{x-y}{2}\right) \right) ^{\frac{1}{2}}} \\
& \ \ \ \ -\frac{2\varepsilon |\varepsilon |b_{i}^{2}\gamma _{i}}{%
2\varepsilon |\varepsilon |b_{i}^{2}}\int \!\!\!\!\!\!\!\!\!\;{}-{}\frac{%
(1+\varepsilon |\varepsilon |b_{i}^{2}p_{i}(y))\sin (x-y)}{\left(
|\varepsilon |^{4}b_{i}^{4}\left( p_{i}(x)-p_{i}(y)\right)
^{2}+4(1+\varepsilon |\varepsilon |b_{i}^{2}p_{i}(x))(1+\varepsilon
|\varepsilon |b_{i}^{2}p_{i}(y))\sin ^{2}\left( \frac{x-y}{2}\right) \right)
^{\frac{3}{2}}} \\
& \ \ \ \ -\gamma _{i}\int \!\!\!\!\!\!\!\!\!\;{}-{}\frac{(1+\varepsilon
|\varepsilon |b_{i}^{2}p_{i}(y))\sin (x-y)}{\left( |\varepsilon
|^{4}b_{i}^{4}\left( p_{i}(x)-p_{i}(y)\right) ^{2}+4(1+\varepsilon
|\varepsilon |b_{i}^{2}p_{i}(x))(1+\varepsilon |\varepsilon
|b_{i}^{2}p_{i}(y))\sin ^{2}\left( \frac{x-y}{2}\right) \right) ^{\frac{3}{2}%
}} \\
& \ \ \ \ \times \bigg(b_{i}^{2}\varepsilon |\varepsilon
|(p_{i}(x)-p_{i}(y))(h_{i}(x)-h_{i}(y)) \\
& \ \ \ \ \ \ \ \ +2(h_{i}(x)(1+\varepsilon |\varepsilon
|b_{i}^{2}p_{i}(y))+h_{i}(y)(1+\varepsilon |\varepsilon
|b_{i}^{2}p_{i}(x)))\sin ^{2}(\frac{x-y}{2})\bigg)dy,
\end{split}
\label{2-11}
\end{equation}%
\begin{equation}
\begin{split}
& \partial _{p_{i}}F_{i22}h_{i}=\gamma _{i}\int \!\!\!\!\!\!\!\!\!\;{}-{}%
\frac{(h_{i}^{\prime }(y)-h_{i}^{\prime }(x))\cos (x-y)dy}{\left(
|\varepsilon |^{4}b_{i}^{4}\left( p_{i}(x)-p_{i}(y)\right)
^{2}+4(1+\varepsilon |\varepsilon |b_{i}^{2}p_{i}(x))(1+\varepsilon
|\varepsilon |b_{i}^{2}p_{i}(y))\sin ^{2}\left( \frac{x-y}{2}\right) \right)
^{\frac{1}{2}}} \\
& {\resizebox{.87\hsize}{!}{$\quad-\frac{\varepsilon|\varepsilon|b_i^2
\gamma_i}{2}\int\!\!\!\!\!\!\!\!\!\; {}-{}
\frac{(p'_i(y)-p'_i(x))\cos(x-y)}{\left(
|\varepsilon|^4b_i^4\left(p_i(x)-p_i(y)\right)^2+4(1+\varepsilon|%
\varepsilon|b_i^2 p_i(x))(1+\varepsilon|\varepsilon|b_i^2
p_i(y))\sin^2\left(\frac{x-y}{2}\right)\right)^{\frac{3}{2}}}$}} \\
& \ \ \ \ \times \bigg(2b_{i}\varepsilon |\varepsilon
|(p_{i}(x)-p_{i}(y))(h_{i}(x)-h_{i}(y)) \\
& \ \ \ \ \ \ \ \ +4(h_{i}(x)(1+\varepsilon |\varepsilon
|b_{i}^{2}p_{i}(y))+h_{i}(y)(1+\varepsilon |\varepsilon
|b_{i}^{2}p_{i}(x)))\sin ^{2}(\frac{x-y}{2})\bigg)dy,
\end{split}
\label{2-12}
\end{equation}%
\begin{equation}
\begin{split}
& \partial _{p_{i}}F_{i23}h_{i}={\resizebox{.86\hsize}{!}{$\frac{%
\varepsilon|\varepsilon|b_i^2h'_i(x)\gamma_i}{1+\varepsilon|%
\varepsilon|b_i^2p_i(x)}\int\!\!\!\!\!\!\!\!\!\; {}-{}
\frac{(p_i(x)-p_i(y))\cos(x-y)dy}{\left(
|\varepsilon|^4b_i^4\left(p_i(x)-p_i(y)\right)^2+4(1+\varepsilon|%
\varepsilon|b_i^2 p_i(x))(1+\varepsilon|\varepsilon|b_i^2
p_i(y))\sin^2\left(\frac{x-y}{2}\right)\right)^{\frac{1}{2}}}$}} \\
& {\resizebox{.97\hsize}{!}{$\quad
-\frac{|\varepsilon|^4b_i^4p'_i(x)h_i(x)\gamma_i}{(1+\varepsilon|%
\varepsilon|b_i^2p_i(x))^2}\int\!\!\!\!\!\!\!\!\!\; {}-{}
\frac{(p_i(x)-p_i(y))\cos(x-y)dy}{\left(
|\varepsilon|^4b_i^4\left(p_i(x)-p_i(y)\right)^2+4(1+\varepsilon|%
\varepsilon|b_i^2 p_i(x))(1+\varepsilon|\varepsilon|b_i^2
p_i(y))\sin^2\left(\frac{x-y}{2}\right)\right)^{\frac{1}{2}}}$}} \\
& {\resizebox{.97\hsize}{!}{$ \quad
+\frac{\varepsilon|\varepsilon|b_i^2p'_i(x)\gamma_i}{1+\varepsilon|%
\varepsilon|b_i^2p_i(x)}\int\!\!\!\!\!\!\!\!\!\; {}-{}
\frac{(h_i(x)-h_i(y))\cos(x-y)dy}{\left(
|\varepsilon|^4b_i^4\left(p_i(x)-p_i(y)\right)^2+4(1+\varepsilon|%
\varepsilon|b_i^2 p_i(x))(1+\varepsilon|\varepsilon|b_i^2
p_i(y))\sin^2\left(\frac{x-y}{2}\right)\right)^{\frac{1}{2}}}$}} \\
& {\resizebox{.97\hsize}{!}{$\quad
-\frac{|\varepsilon|^4b_i^4p'_i(x)\gamma_i}{1+\varepsilon|%
\varepsilon|b_i^2p_i(x)}\int\!\!\!\!\!\!\!\!\!\; {}-{}
\frac{(p_i(x)-p_i(y))\cos(x-y)}{\left(
|\varepsilon|^4b_i^4\left(p_i(x)-p_i(y)\right)^2+4(1+\varepsilon|%
\varepsilon|b_i^2 p_i(x))(1+\varepsilon|\varepsilon|b_i^2
p_i(y))\sin^2\left(\frac{x-y}{2}\right)\right)^{\frac{3}{2}}}$}} \\
& \quad \times \bigg(b_{i}^{2}\varepsilon |\varepsilon
|(p_{i}(x)-p_{i}(y))(h_{i}(x)-h_{i}(y)) \\
& \qquad \quad +2(h_{i}(x)(1+\varepsilon |\varepsilon
|b_{i}^{2}p_{i}(y))+h_{i}(y)(1+\varepsilon |\varepsilon
|b_{i}^{2}p_{i}(x)))\sin ^{2}(\frac{x-y}{2})\bigg)dy,
\end{split}
\label{2-13}
\end{equation}%
and
\begin{equation}
\begin{split}
& \partial _{p_{i}}F_{i24}h_{i}={\resizebox{.87\hsize}{!}{$\frac{%
\varepsilon|\varepsilon|b_i^2\gamma_i}{1+\varepsilon|%
\varepsilon|b_i^2p_i(x)}\int\!\!\!\!\!\!\!\!\!\; {}-{}
\frac{h'_i(y)p'_i(x)\sin(x-y)dy}{\left(
|\varepsilon|^4b_i^4\left(p_i(x)-p_i(y)\right)^2+4(1+\varepsilon|%
\varepsilon|b_i^2 p_i(x))(1+\varepsilon|\varepsilon|b_i^2
p_i(y))\sin^2\left(\frac{x-y}{2}\right)\right)^{\frac{1}{2}}}$}} \\
& {\resizebox{.97\hsize}{!}{$\quad+\frac{\varepsilon|\varepsilon|b_i^2%
\gamma_i}{1+\varepsilon|\varepsilon|b_i^2p_i(x)}\int\!\!\!\!\!\!\!\!\!\;
{}-{} \frac{h'_i(x)p'_i(y)\sin(x-y)dy}{\left(
|\varepsilon|^4b_i^4\left(p_i(x)-p_i(y)\right)^2+4(1+\varepsilon|%
\varepsilon|b_i^2 p_i(x))(1+\varepsilon|\varepsilon|b_i^2
p_i(y))\sin^2\left(\frac{x-y}{2}\right)\right)^{\frac{1}{2}}}$}} \\
& {\resizebox{.97\hsize}{!}{$\quad
-\frac{|\varepsilon|^4b_i^4\gamma_i}{1+\varepsilon|\varepsilon|b_i^2p_i(x)}%
\int\!\!\!\!\!\!\!\!\!\; {}-{} \frac{p'_i(x)p'_i(y)\sin(x-y)}{\left(
|\varepsilon|^4b_i^4\left(p_i(x)-p_i(y)\right)^2+4(1+\varepsilon|%
\varepsilon|b_i^2 p_i(x))(1+\varepsilon|\varepsilon|b_i^2
p_i(y))\sin^2\left(\frac{x-y}{2}\right)\right)^{\frac{3}{2}}}$}} \\
& \ \ \ \quad \times \bigg(b_{i}^{2}\varepsilon |\varepsilon
|(p_{i}(x)-p_{i}(y))(h_{i}(x)-h_{i}(y)) \\
& \qquad \quad +2(h_{i}(x)(1+\varepsilon |\varepsilon
|b_{i}^{2}p_{i}(y))+h_{i}(y)(1+\varepsilon |\varepsilon
|b_{i}^{2}p_{i}(x)))\sin ^{2}(\frac{x-y}{2})\bigg)dy.
\end{split}
\label{2-14}
\end{equation}%
By similar computations as before, we deduce that
\begin{equation}
\partial _{p_{i}}F_{i3}^{1}(\varepsilon ,p_{1},p_{2})h_{i}=|\varepsilon
|\gamma _{i}\partial _{p_{i}}\mathcal{R}_{i3}(\varepsilon
,p_{1},p_{2})h_{i},\quad \mbox{for}\,i=1,2.  \label{fh}
\end{equation}%
Notice that it is continuous, since $\mathcal{R}_{i3}(\varepsilon
,p_{1},p_{2},h_{1},h_{2})$ is a continuous function which is not singular at
$x=y$. Also, we have that
\begin{equation}
\partial _{p_{3-i}}F^{1}(\varepsilon ,p_{1},p_{2})h_{3-i}=|\varepsilon
|\gamma _{3-i}\partial _{p_{3-i}}\mathcal{R}_{3-i}(\varepsilon
,p_{1},p_{2})h_{3-i},\quad \mbox{for}\,i=1,2.  \label{f3h}
\end{equation}%
Similarly, we deduce that it is continuous, since $\mathcal{R}%
_{3-i}(\varepsilon ,p_{1},p_{2})$ is a continuous function. To this aim, the
first step is to show
\begin{equation}
\lim\limits_{t\rightarrow 0}\left\Vert \frac{F_{i2j}(\varepsilon ,\Omega
,\bar{x},p_{i}+th_{i},p_{2})-F_{i2j}(\varepsilon ,p_{1},p_{2})}{t}-\partial
_{p_{i}}F_{i2j}h_{i}\right\Vert _{Y^{k-1}}\rightarrow 0,  \label{gateaux}
\end{equation}%
for $j=1,2,3,4$. The only term that one should care about is the most
singular which is when $j=2$. To this end, one uses the notations given in
Lemma \ref{2-2}. Then, we have
\begin{equation*}
\begin{split}
& \frac{F_{i22}(\varepsilon ,p_{i}+th_{i})-F_{i22}(\varepsilon ,p_{1},p_{2})%
}{t}-\partial _{p_{i}}F_{i22}(\varepsilon ,p_{1},p_{2})h_{i} \\
& =\frac{\gamma _{i}}{t}\int \!\!\!\!\!\!\!\!\!\;{}-{}(p_{i}^{\prime
}(x)-p_{i}^{\prime }(y))\cos (x-y)\left( \frac{1}{D_{1}(p_{i}+th_{i})}-\frac{%
1}{D_{1}(p_{i})}\right) \\
& +\frac{\gamma _{i}}{t}\int \!\!\!\!\!\!\!\!\!\;{}-{}\frac{t\varepsilon
|\varepsilon |b_{i}^{2}}{2}\cdot \frac{\Delta p_{i}\Delta
h_{i}+2(h_{i}(1+\varepsilon |\varepsilon |b_{i}^{2}\tilde{p}_{i})+\tilde{h}%
_{i}(1+\varepsilon |\varepsilon |b_{i}^{2}p_{i}))\sin ^{2}(\frac{x-y}{2})}{%
D_{1}(p_{i})}\bigg)dy \\
& \ \ \ \ +\gamma _{i}\int \!\!\!\!\!\!\!\!\!\;{}-{}(h_{i}^{\prime
}(x)-h_{i}^{\prime }(y))\cos (x-y)\bigg(\frac{1}{D_{1}(p_{i}+th_{i})}-\frac{1%
}{D_{1}(p_{i})}\bigg)dy \\
& =G_{i21}+G_{i22}.
\end{split}%
\end{equation*}%
Taking $\partial ^{k-1}$ derivatives of $G_{i21}$ yields
\begin{equation*}
\begin{split}
\partial ^{k-1}G_{21}& ={\resizebox{.85\hsize}{!}{$\frac{\gamma_i}{t}\int\!%
\!\!\!\!\!\!\!\!\;
{}-{}\bigg(\frac{1}{D_1(p_i+th_i)}-\frac{1}{D_1(p_i)}+\frac{t\varepsilon|%
\varepsilon|b_i^2}{2}\frac{\Delta p_i\Delta
h_i+2(h_i(1+\varepsilon|\varepsilon|b_i^2\tilde p_i)+\tilde
h_i(1+\varepsilon|\varepsilon|b_i^2
p_i))\sin^2(\frac{x-y}{2})}{D_1(p_i)}\bigg)$}} \\
& \ \ \ \ \times (\partial ^{k}p_{i}(x)-\partial ^{k}p_{i}(y))\cos
(x-y)dy+l.o.t.
\end{split}%
\end{equation*}%
By applying the mean value theorem, we arrive at
\begin{equation*}
\begin{split}
\frac{1}{D_{1}(p_{i}+th_{i})}-& \frac{1}{D_{1}(p_{i})}+\frac{t\varepsilon
|\varepsilon |b_{i}^{2}}{2}\frac{\Delta p_{i}\Delta
h_{i}+2(h_{i}(1+\varepsilon |\varepsilon |b_{i}^{2}\tilde{p}_{i})+\tilde{h}%
_{i}(1+\varepsilon |\varepsilon |b_{i}^{2}p_{i}))\sin ^{2}(\frac{x-y}{2})}{%
D_{1}(p_{i})} \\
& \qquad \qquad \qquad \qquad \qquad \qquad \qquad \qquad \qquad \sim \frac{%
Ct^{2}}{|\sin (\frac{x-y}{2})|}\zeta (\varepsilon ,p_{1},p_{2},h_{1},h_{2}),
\end{split}%
\end{equation*}%
where $\Vert \zeta (\varepsilon ,p_{1},p_{2},h_{1},h_{2})\Vert _{L^{\infty
}}<\infty $. It follows that
\begin{equation*}
\Vert G_{i21}\Vert _{Y^{k-1}}\leq Ct\left\Vert \int \!\!\!\!\!\!\!\!\!\;{}-{}%
\frac{\partial ^{k}p_{i}(x)-\partial ^{k}p_{i}(y)}{|\sin (\frac{x-y}{2})|}%
dy\right\Vert _{L^{2}}\leq Ct\Vert p_{i}\Vert _{X^{k+\log }}.
\end{equation*}%
Similar computations as before gives the bound $\Vert G_{i22}\Vert
_{Y^{k-1}}\leq Ct\Vert p_{i}\Vert _{X^{k+\log }}$ by using \eqref{2-7}.
Then, by sending $t\rightarrow 0$, we arrive at \eqref{gateaux}. The
continuity of $\partial _{p_{i}}F_{i2}(\varepsilon ,p_{1},p_{2})h_{i}$
follows the same spirit as \eqref{2-7}. This completes the proof of Lemma %
\ref{lem2-3}.
\end{proof}

From \eqref{2-10}-\eqref{f3h}, by letting $\varepsilon =0$ and $p_{i}\equiv
0 $, for $i=1,2$, we obtain that
\begin{equation}
\partial _{p_{i}}F_{i}^{1}(0,\Omega ,\bar{x},0,0)h_{i}=\frac{\gamma _{i}}{2}%
\int \!\!\!\!\!\!\!\!\!\;{}-{}\frac{h_{i}(x-y)\sin (y)dy}{\left( 4\sin ^{2}(%
\frac{y}{2})\right) ^{\frac{1}{2}}}-\gamma _{i}\int \!\!\!\!\!\!\!\!\!\;{}-{}%
\frac{(h^{\prime }(x)-h_{i}^{\prime }(x-y))\cos (y)dy}{\left( 4\sin ^{2}(%
\frac{y}{2})\right) ^{\frac{1}{2}}}  \label{gat1}
\end{equation}%
and
\begin{equation}
\partial _{p_{3-i}}F_{i}^{1}(0,\Omega ,\bar{x},0,0)h_{3-i}=0.  \label{gat2}
\end{equation}

\begin{lemma}
\label{critical} For $\gamma _{1}\neq \gamma _{2}$, there exist two initial
point vortices $\gamma _{1}\pi \delta _{0}$ and $\gamma _{2}\pi \delta _{d}$
rotating uniformly around the centroid
\begin{equation}
\bar{x}^{\ast }:=\frac{d\gamma _{2}}{\gamma _{1}+\gamma _{2}},  \label{pos}
\end{equation}%
with angular speed
\begin{equation}
\Omega ^{\ast }:=\frac{\gamma _{1}+\gamma _{2}}{2d^{3}}.  \label{vel}
\end{equation}
\end{lemma}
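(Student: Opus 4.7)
The plan is to view the two Dirac masses $\gamma_{1}\pi\delta_{0}$ and $\gamma_{2}\pi\delta_{d\boldsymbol{e}_{1}}$ as a singular limit of the patch data \eqref{sole} (as $\varepsilon\to 0$) and to work with the associated SQG point--vortex dynamics. In this limit the self-consistent motion of each Dirac mass is dictated by the Biot--Savart stream function $\psi(x)=\gamma\pi K^{1}(x-x_{0})=\gamma/(2|x-x_{0}|)$ generated by the other (using $C_{1}=1$), combined with the velocity law $\mathbf{v}=\nabla^{\perp}\psi$ in the paper's convention $\nabla^{\perp}=(\partial_{2},-\partial_{1})$. The lemma then reduces to matching this induced velocity at each vortex center with the velocity of a rigid counter-clockwise rotation about the common centroid $(\bar{x}^{\ast},0)$ at angular speed $\Omega^{\ast}$.

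First, I would differentiate $\psi$ explicitly and evaluate: the velocity at $(0,0)$ induced by the vortex at $(d,0)$ is $(0,-\gamma_{2}/(2d^{2}))$, and the velocity at $(d,0)$ induced by the vortex at $(0,0)$ is $(0,\gamma_{1}/(2d^{2}))$. On the other hand, rigid counter-clockwise rotation about $(\bar{x}^{\ast},0)$ with angular speed $\Omega^{\ast}$ assigns to a point $(x_{1},x_{2})$ the velocity $\Omega^{\ast}(-x_{2},\,x_{1}-\bar{x}^{\ast})$, so the corresponding rotation velocities at the two vortex centers are $\Omega^{\ast}(0,-\bar{x}^{\ast})$ and $\Omega^{\ast}(0,d-\bar{x}^{\ast})$, respectively.

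Matching the two produces the linear system
\begin{equation*}
\Omega^{\ast}\,\bar{x}^{\ast}=\frac{\gamma_{2}}{2d^{2}},\qquad \Omega^{\ast}(d-\bar{x}^{\ast})=\frac{\gamma_{1}}{2d^{2}}.
\end{equation*}
Adding these two equations yields $\Omega^{\ast}=(\gamma_{1}+\gamma_{2})/(2d^{3})$, which is \eqref{vel}, and substituting back gives $\bar{x}^{\ast}=d\gamma_{2}/(\gamma_{1}+\gamma_{2})$, which is \eqref{pos}. The tacit non-degeneracy assumption $\gamma_{1}+\gamma_{2}\neq 0$ (already imposed in the co-rotating setting) is exactly what makes this $2\times2$ system solvable, while the hypothesis $\gamma_{1}\neq\gamma_{2}$ simply guarantees $\bar{x}^{\ast}\neq d/2$, i.e.\ the genuinely asymmetric configuration. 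I do not anticipate any substantive obstacle here; the only point warranting real care is the sign convention for $\nabla^{\perp}$ and the orientation of the rotation, since a sign error would swap the roles of $\gamma_{1}$ and $\gamma_{2}$ in $\bar{x}^{\ast}$. Beyond that, the proof is an explicit, essentially algebraic evaluation.
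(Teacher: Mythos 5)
Your proposal is correct: the sign bookkeeping checks out (with $\nabla^{\perp}=(\partial_{2},-\partial_{1})$ and $C_{1}=1$ the induced velocities at $(0,0)$ and $(d,0)$ are indeed $(0,-\gamma_{2}/(2d^{2}))$ and $(0,\gamma_{1}/(2d^{2}))$), and you arrive at exactly the $2\times 2$ system $\Omega^{\ast}\bar{x}^{\ast}=\gamma_{2}/(2d^{2})$, $\Omega^{\ast}(d-\bar{x}^{\ast})=\gamma_{1}/(2d^{2})$, solved the same way (add, then substitute back). The only real difference is how that system is produced. You derive it externally, from the classical point-vortex law: each Dirac mass is advected by the Biot--Savart velocity generated by the other, and this must coincide with rigid rotation about the centroid. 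The paper instead stays inside its contour-dynamics framework: it evaluates the already-expanded functional $F_{i}(\varepsilon,\Omega,\bar{x},p_{1},p_{2})$ at $(\varepsilon,p_{1},p_{2})=(0,0,0)$ using \eqref{2-4}, \eqref{2-8} and \eqref{Fi3}, obtaining $F_{i}(0,\Omega,\bar{x},0,0)=-\Omega\big((-1)^{i+1}\bar{x}+(i-1)d\big)\sin(x)+\tfrac{\gamma_{3-i}}{2d^{2}}\sin(x)$, whose vanishing is the same system. Your route is more physically transparent and independent of the desingularization machinery; the paper's route has the advantage that the identical expressions are reused immediately afterwards (in the computation of $D_{(\Omega,\bar{x})}F^{1}$ in Lemma \ref{lem2-4} and in the adjustment of $\Omega,\bar{x}$ in Lemma \ref{lem2-5}), so the lemma is literally the statement $F_{i}(0,\Omega^{\ast},\bar{x}^{\ast},0,0)=0$ needed as the base point for the implicit function theorem. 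Your closing remarks are also accurate: solvability requires only $\gamma_{1}+\gamma_{2}\neq 0$, and $\gamma_{1}\neq\gamma_{2}$ merely encodes asymmetry.
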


\begin{proof}
It is well-known that the two points vortex system is a solution to the
following system of equations
\begin{equation*}
F_{i}(0,\Omega ,\bar{x},0,0)=0,\,\mbox{for}\,i=1,2.
\end{equation*}%
By using the computations obtained in Section 3, specifically \eqref{2-4}-%
\eqref{Fi3}, with $p_{i}=0$, for $i=1,2$ and $\varepsilon =0$, we can easily
verify that
\begin{equation*}
F_{i}(0,\Omega ,\bar{x},0,0)=-\Omega \left( -(-1)^{i}\bar{x}\sin (x)+(i-1)d\sin
(x)\right) +\frac{\gamma _{3-i}\sin (x)}{2d^{2}},\,\mbox{for}\,i=1,2.
\end{equation*}%
Therefore $F_{i}(0,\Omega ,\bar{x},0,0)=0$, if and only if, we have the
following two equalities
\begin{equation}
F_{1}(0,\Omega ,\bar{x},0,0)=-\Omega \bar{x}\sin (x)+\frac{\gamma _{2}\sin (x)}{%
2d^{2}}=0  \label{f1}
\end{equation}%
and
\begin{equation}
F_{2}(0,\Omega ,\bar{x},0,0)=-\Omega \left( -\bar{x}\sin (x)+d\sin (x)\right) +%
\frac{\gamma _{1}\sin (x)}{2d^{2}}=0.  \label{f2}
\end{equation}%
Adding the above equations yields
\begin{equation*}
\Omega =\Omega ^{\ast }:=\frac{\gamma _{1}+\gamma _{2}}{2d^{3}}.
\end{equation*}%
Now, by replacing $\Omega ^{\ast }$ into \eqref{f1}, we obtain
\begin{equation*}
\bar{x}=\bar{x}^{\ast }:=\frac{d\gamma _{2}}{\gamma _{1}+\gamma _{2}}.
\end{equation*}
\end{proof}

Now, we are in position to prove that $F^1_i(\varepsilon,\Omega,\bar{x},p_1,p_2)$
at the point $(0,\Omega^*,\bar{x}^*,0,0)$ is an isomorphism.

\begin{lemma}
\label{lem2-4} Let $h=(\beta _{1},\beta _{2},h_{1},h_{2})\in \mathbb{R}%
\times \mathbb{R}\times X^{k+\log }$, with $h_{i}(x)=\sum_{j=1}^{\infty
}a_{n}^{i}\sin (jx)$, $i=1,2$. Then, it holds
\begin{align*}
D_{g}F(0,g_{0})h(x)=& -\dfrac{\beta _{1}d}{\gamma _{1}+\gamma _{2}}%
\begin{pmatrix}
\gamma _{2} \\
\gamma _{1}%
\end{pmatrix}%
\sin (x)-\dfrac{\beta _{2}(\gamma _{1}+\gamma _{2})}{2d^{3}}%
\begin{pmatrix}
1 \\
-1%
\end{pmatrix}%
\sin (x) \\
& \qquad -\sum_{j=1}^{\infty }\,j\sigma _{j}%
\begin{pmatrix}
{\gamma _{1}}a_{j}^{1} \\
{\gamma _{2}}a_{j}^{2}%
\end{pmatrix}%
\sin (jx)
\end{align*}%
with
\begin{equation*}
g\triangleq (\Omega ,\bar{x},p_{1},p_{2})\quad \text{and}\quad g_{0}\triangleq
(\Omega ^{\ast },\bar{x}^{\ast },0,0),
\end{equation*}%
where $\sigma _{j}$ is given by
\begin{equation*}
\sigma _{j}=\frac{2}{\pi }\sum\limits_{i=1}^{j}\frac{1}{2i-1}.
\end{equation*}%
Moreover, the linearized operator $D_{g}F(0,g_{0}):\mathbb{R}\times \mathbb{%
\ R}\times X^{k+\log }\rightarrow Y^{k-1}$ is an isomorphism.
\end{lemma}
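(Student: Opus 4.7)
The plan is to compute the linearization explicitly by collecting the partial derivatives already derived in Lemmas~\ref{lem2-2}--\ref{lem2-3}, and then to establish invertibility mode by mode in Fourier space.

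\textbf{Step 1 (assembling the derivative).} Since $F_{i1}$ in \eqref{2-1} is affine in $\Omega$ and $\bar{x}$, while $F_{i2}$ and $F_{i3}$ depend on neither, the derivatives $\partial_\Omega F^1_i(0,g_0)$ and $\partial_{\bar{x}} F^1_i(0,g_0)$ are read off directly from \eqref{2-1}. Substituting $\bar{x}^\ast = d\gamma_2/(\gamma_1+\gamma_2)$ and $\Omega^\ast = (\gamma_1+\gamma_2)/(2d^3)$ from Lemma~\ref{critical} produces the coefficients of $\beta_1$ and $\beta_2$ in the stated formula. The patch-variable derivatives are given by \eqref{gat1} and \eqref{gat2}, the latter vanishing identically. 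Expanding $h_i(x) = \sum_{j\geq 1} a_j^i \cos(jx)$ and testing \eqref{gat1} against each pure mode reduces the computation to elementary integrals of the type
\begin{equation*}
\int\!\!\!\!\!\!\!\!\!\;{}-{}\frac{\cos(jy)\sin(y)}{|\sin(y/2)|}\,dy
\quad\text{and}\quad
\int\!\!\!\!\!\!\!\!\!\;{}-{}\frac{(1-\cos(jy))\cos(y)}{|\sin(y/2)|}\,dy,
\end{equation*}
both evaluable in closed form by expanding $\cos(jy)$ in Chebyshev polynomials, yielding the Fourier multiplier representation $-\gamma_i j \sigma_j \sin(jx)$ on the $j$-th mode with $\sigma_j = \frac{2}{\pi}\sum_{i=1}^{j}\frac{1}{2i-1}$.

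\textbf{Step 2 (invertibility).} Let $k=(k_1,k_2)\in Y^{k-1}$ with Fourier expansion $k_i(x) = \sum_{j\geq 1} c_j^i \sin(jx)$. Projecting the equation $D_g F^1(0,g_0)h = k$ on each Fourier mode decouples it. For $j=1$ it becomes the $2\times 2$ linear system
\begin{equation*}
\begin{pmatrix} -\dfrac{d\gamma_2}{\gamma_1+\gamma_2} & -\dfrac{\gamma_1+\gamma_2}{2d^3} \\[4pt] -\dfrac{d\gamma_1}{\gamma_1+\gamma_2} & \dfrac{\gamma_1+\gamma_2}{2d^3}\end{pmatrix}
\begin{pmatrix}\beta_1\\ \beta_2\end{pmatrix}
=
\begin{pmatrix}c_1^1\\ c_1^2\end{pmatrix},
\end{equation*}
whose determinant equals $-(\gamma_1+\gamma_2)/(2d^2)\neq 0$ by hypothesis, so $\beta_1,\beta_2$ are uniquely determined. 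For each $j\geq 2$ the system is diagonal, $-\gamma_i j \sigma_j a_j^i = c_j^i$, well defined since $\sigma_j > 0$ and $\gamma_i \neq 0$. Together these formulas determine $(\beta_1,\beta_2,h_1,h_2)$ uniquely in terms of $k$.

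\textbf{Main obstacle.} The non-trivial point is the continuity of the inverse, i.e.\ the bound $\|h_i\|_{X^{k+\log}} \lesssim \|k_i\|_{H^{k-1}}$. Via the Fourier characterization of the $X^{k+\log}$ seminorm, this reduces to controlling $\sum_j j^{2k}\sigma_j^2 |a_j^i|^2$ by $\sum_j j^{2(k-1)} |c_j^i|^2$; after substituting $|a_j^i| = |c_j^i|/(\gamma_i j\sigma_j)$, the two sides become comparable precisely because $\sigma_j \sim C\log j$ as $j\to\infty$. Pinning down this logarithmic asymptotics of $\sigma_j$ (not merely its positivity) is the analytical heart of the isomorphism claim, and is where the scale $X^{k+\log}$ is dictated rather than a larger fractional Sobolev scale.
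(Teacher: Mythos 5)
Your proposal follows essentially the same route as the paper: the $\beta_1,\beta_2$ columns are read off from the affine dependence of $F_{i1}$ on $(\Omega,\bar x)$ evaluated at $(\Omega^\ast,\bar x^\ast)$, the patch derivatives are the Fourier multipliers coming from \eqref{gat1}--\eqref{gat2}, and invertibility is checked mode by mode, with the $j=1$ block reducing to the same $2\times2$ determinant $-(\gamma_1+\gamma_2)/(2d^2)$ and the $j\ge2$ blocks being diagonal; your closing discussion of why $\sigma_j\sim\log j$ forces the scale $X^{k+\log}$ is exactly the point the paper handles by writing the explicit inverse \eqref{g-1} and recording the asymptotics of $\sigma_j$. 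The one loose end, which you share with the paper's own statement (sum from $j=1$ with $\sigma_1=2/\pi\neq0$) versus its proof (sum from $j=2$), is the first cosine mode of $h_1,h_2$: a direct computation with \eqref{gat1} shows $\cos(x)$ is annihilated, so your $j=1$ system is genuinely $2\times2$ in $(\beta_1,\beta_2)$ only if those modes are excluded from the domain (or acknowledged as kernel directions); as written, your Step 1 multiplier and Step 2 solvability claim do not quite cohere on this single mode, but this is inherited from the lemma statement rather than a defect of your argument.
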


\begin{proof}
Since $F^{1}=(F_{1}^{1},F_{2}^{1})$, choose $h=(h_{1},h_{2})\in X^{k+\log }$
to obtain
\begin{equation*}
D_{p}F^{1}(0,\Omega ,\bar{x},0,0)h(x)=%
\begin{pmatrix}
\partial _{p_{1}}F_{1}^{1}(0,\Omega ,\bar{x},0,0)h_{1}(x)+\partial
_{p_{2}}F_{1}^{1}(0,\Omega ,\bar{x},0,0)h_{2}(x) \\
\partial _{p_{1}}F_{2}^{1}(0,\Omega ,\bar{x},0,0)h_{1}(x)+\partial
_{p_{2}}F_{2}^{1}(0,\Omega ,\bar{x},0,0)h_{2}(x)%
\end{pmatrix}%
,
\end{equation*}%
with $p=(p_{1},p_{2})$. By using \eqref{gat1}, the Gateaux derivative of the
functional $F_{i}^{1}$ at the point $(0,\Omega ,\bar{x},0,0)$ in the direction
$h_{i}$ has the following form
\begin{equation*}
\partial _{p_{i}}F_{i}^{1}(0,\Omega ,\bar{x},0,0)h_{i}(x)=\frac{\gamma _{i}}{2}%
\int \!\!\!\!\!\!\!\!\!\;{}-{}\frac{h_{i}(x-y)\sin (y)dy}{\left( 4\sin ^{2}(%
\frac{y}{2})\right) ^{\frac{1}{2}}}-\gamma _{i}\int \!\!\!\!\!\!\!\!\!\;{}-{}%
\frac{(h_{i}^{\prime }(x)-h_{i}^{\prime }(x-y))\cos (y)dy}{\left( 4\sin ^{2}(%
\frac{y}{2})\right) ^{\frac{1}{2}}}.
\end{equation*}%
On the other hand, the Gateaux derivative of the functional $F_{i}^{1}$ at
the point $(0,\Omega ,\bar{x},0,0)$ with direction $h_{3-i}$ was obtained in %
\eqref{gat1} which is given by
\begin{equation*}
\partial _{p_{3-i}}F_{i}^{1}(0,\Omega ,\bar{x},0,0)h_{3-i}(x)=0.
\end{equation*}%
Hence, the differential of the nonlinear functional $F^{1}$ around $%
(0,\Omega ,\bar{x},0,0)$ in the direction $h=(h_{1},h_{2})\in X^{k+\log }$ has
the following form
\begin{equation*}
D_{p}F^{1}(0,\Omega ,\bar{x},0,0)h(x)=%
\begin{pmatrix}
-{\gamma _{1}}\sum\limits_{j=2}^{\infty }a_{j}^{1}\sigma _{j}j\sin (jx), \\
-{\gamma _{2}}\sum\limits_{j=2}^{\infty }a_{j}^{2}\sigma _{j}j\sin (jx),%
\end{pmatrix}%
,
\end{equation*}%
where $p=(p_{1},p_{2})$. According to \cite[Proposition 2.4]{Cas1}, $\sigma
_{j}$ can take different expressions depending on the value of $\alpha $.
For $\alpha \neq 1$, we have that
\begin{equation*}
\sigma _{j}=2^{\alpha }\frac{2\pi \Gamma (1-\alpha )}{\Gamma (\frac{\alpha }{%
2})\Gamma (1-\frac{\alpha }{2})}\left( \frac{\Gamma (\frac{\alpha }{2})}{%
\Gamma (1-\frac{\alpha }{2})}-\frac{\Gamma (j+\frac{\alpha }{2})}{\Gamma
(j+1-\frac{\alpha }{2})}\right) ,
\end{equation*}%
and for $\alpha =1,$%
\begin{equation*}
\sigma _{j}=\frac{2}{\pi }\sum\limits_{i=1}^{j}\frac{1}{2i-1}.
\end{equation*}%
An important fact here is that $\{\sigma _{j}\}$ is increasing with respect
to $j$. Moreover, the asymptotic behavior of $\{\sigma _{j}\},$ when $j$ is
large, is $\sigma _{j}\sim \ln (j)$ if $\alpha =1,$ and $\sigma _{j}\sim
j^{\alpha -1}$ if $1<\alpha <2$.\newline
\newline
From Lemma \ref{lem2-2}, one can easily check that
\begin{align}
F_{i}^{1}(0,\Omega ,\bar{x},p_{1},p_{2})& =\frac{\gamma _{i}}{4}\int
\!\!\!\!\!\!\!\!\!\;{}-{}\frac{p_{i}(x-y)\sin (y)dy}{|\sin (\frac{y}{2})|}-%
\frac{\gamma _{i}}{2}\int \!\!\!\!\!\!\!\!\!\;{}-{}\frac{(p_{i}^{\prime
}(x)-p_{i}^{\prime }(x-y))\cos (y)dy}{|\sin (\frac{y}{2})|}  \notag \\
& \qquad +\Omega \left( (-1)^{i}\bar{x}\sin (x)-(i-1)d\sin (x)\right) -\frac{%
\gamma _{3-i}\sin (x)}{2d^{2}}.  \label{gatf2}
\end{align}%
Differentiating \eqref{gatf2} with respect to the angular speed $\Omega $
around the point $(\Omega ^{\ast },\bar{x}^{\ast })$ yields
\begin{equation*}
\partial _{\Omega }F_{i}^{1}(0,\Omega ^{\ast },\bar{x}^{\ast
},p_{1},p_{2})=(-1)^{i}\bar{x}^{\ast }\sin (x)-(i-1)d\sin (x).
\end{equation*}%
Next, differentiating \eqref{gatf} with respect to $\bar{x}$ at the point $%
(\Omega ^{\ast },\bar{x}^{\ast }),$ we get
\begin{equation*}
\partial _{\bar{x}}F_{i}^{1}(0,\Omega ^{\ast },\bar{x}^{\ast
},p_{1},p_{2})=(-1)^{i}\Omega ^{\ast }\sin (x).
\end{equation*}%
Therefore, for all $(\beta _{1},\beta _{2})\in \mathbb{R}^{2}$
\begin{equation}
D_{(\Omega ,\bar{x})}F^{1}(0,\Omega ^{\ast },\bar{x}^{\ast },p_{1},p_{2})(\beta
_{1},\beta _{2})=-\beta _{1}%
\begin{pmatrix}
\dfrac{\gamma _{2}d}{\gamma _{1}+\gamma _{2}} \\
\dfrac{\gamma _{1}d}{\gamma _{1}+\gamma _{2}}%
\end{pmatrix}%
\sin (x)-\beta _{2}%
\begin{pmatrix}
\dfrac{\gamma _{1}+\gamma _{2}}{2d^{3}} \\
-\dfrac{\gamma _{1}+\gamma _{2}}{2d^{3}}%
\end{pmatrix}%
\sin (x).  \label{deriv22}
\end{equation}

Now, we obtain the explicit expression of the inverse of $D_{(\Omega
,\bar{x},p_{1},p_{2})}F^{1}(0,\Omega ^{\ast },\bar{x}^{\ast },p_{1},p_{2})$,
namely $D_{g}F^{1}(0,g_{0})^{-1}$. For all $h=(\beta _{1},\beta
_{2},h_{1},h_{2})\in \mathbb{R}\times \mathbb{R}\times X^{k+\log }$ with
\begin{equation*}
h_{i}(x)=\sum_{j=1}^{\infty }a_{j}^{i}\cos (jx),\quad i=1,2,
\end{equation*}%
one gets
\begin{align*}
D_{g}F^{1}(0,g_{0})h(x)=& -\dfrac{\beta _{1}d}{\gamma _{1}+\gamma _{2}}%
\begin{pmatrix}
\gamma _{2} \\
\gamma _{1}%
\end{pmatrix}%
\sin (x)-\beta _{2}\dfrac{\gamma _{1}+\gamma _{2}}{2d^{3}}%
\begin{pmatrix}
1 \\
-1%
\end{pmatrix}%
\sin (x) \\
& \qquad -\sum_{j=2}^{\infty }\,j\sigma _{j}%
\begin{pmatrix}
{\gamma _{1}}a_{j}^{1} \\
{\gamma _{2}}a_{j}^{2}%
\end{pmatrix}%
\sin (jx).
\end{align*}%
Let $k\in Y^{k-1}$ satisfy the following expansion
\begin{equation*}
k(x)=\sum_{j=1}^{\infty }%
\begin{pmatrix}
A_{j} \\
B_{j}%
\end{pmatrix}%
\sin (jx).
\end{equation*}%
Notice that we can easily obtain the inverse of the linearized operator $%
D_{g}F^{1}(0,g_{0})k(x)$ which has the following expression
\begin{align}
D_{g}& F^{1}(0,g_{0})^{-1}k(x)=  \notag  \label{g-1} \\
& -\bigg(\dfrac{A_{1}+B_{1}}{d}\cos (x),\dfrac{2d^{3}(A_{1}\gamma
_{1}-B_{1}\gamma _{2})}{(\gamma _{1}+\gamma _{2})^{2}}\cos (x),\displaystyle%
\sum_{j=2}^{\infty }\dfrac{A_{j}}{j\gamma _{1}}\cos (jx),\displaystyle%
\sum_{j=2}^{\infty }\dfrac{B_{j}}{j\gamma _{2}}\cos (jx)\bigg).
\end{align}%
Now we are going to prove $D_{(\Omega ,\bar{x},p_{1},p_{2})}F^{1}(0,\Omega
^{\ast },\bar{x}^{\ast },p_{1},p_{2})$ is an isomorphism from $\mathbb{R}%
\times \mathbb{\ R}\times X^{k+\log }$ to $Y^{k-1}$. From Lemma \ref{2-2}
and \eqref{deriv22}, it is obvious that $D_{(\Omega
,\bar{x},p_{1},p_{2})}F^{1}(0,\Omega ^{\ast },\bar{x}^{\ast },p_{1},p_{2})$ maps
$\mathbb{R}\times \mathbb{\ R}\times X^{k+\log }$ into $Y^{k-1}$ . Hence
only the invertibility needs to be considered. In fact, the restricted
linear operator $D_{(\Omega ,\bar{x},p_{1},p_{2})}F^1(0,\Omega ^{\ast
},\bar{x}^{\ast },p_{1},p_{2})$ is invertible if and only if the determinant
of the two vectors obtained in \eqref{deriv22} is non-vanishing. In fact, the
determinant is equal to $-\displaystyle\frac{1}{2d^{2}}({\gamma _{1}+\gamma
_{2}}),$ which is different from zero for all $\gamma _{1}+\gamma _{2}\neq
0. $
\end{proof}

Now, we see how to obtain the critical angular velocity $\Omega ^{\ast }$
and the critical centroid $\bar{x}^{\ast }$.

\begin{lemma}
\label{lem2-5} There exist
\begin{equation*}
\Omega (\varepsilon ,p_{1},p_{2}):=\Omega ^{\ast }+\varepsilon \mathcal{R}%
_{\Omega }(\varepsilon ,p_{1},p_{2})
\end{equation*}%
and
\begin{equation*}
\bar{x}(\varepsilon ,p_{1},p_{2}):=\bar{x}^{\ast }+\varepsilon \mathcal{R}%
_{\Omega }(\varepsilon ,p_{1},p_{2}),
\end{equation*}%
with $\Omega ^{\ast }$ and $\bar{x}^{\ast }$ given in \eqref{pos} and %
\eqref{vel}, respectively, and a continuous function $\mathcal{R}_{\Omega
}(\varepsilon ,p_{1},p_{2}):X^{k+\log }\rightarrow \mathbb{R}$ such that $%
\tilde{F}^{1}(\varepsilon ,p_{1},p_{2}):(-\frac{1}{2},\frac{1}{2})\times
\mathbb{R}\times \mathbb{R}\times B_{r}\rightarrow Y^{k-1}$ is given by
\begin{equation*}
\tilde{F}^{1}(\varepsilon,p_{1},p_{2}):=F^{1}(\varepsilon
,\Omega (\varepsilon ,p_{1},p_{2}),\bar{x},p_{1},p_{2}).
\end{equation*}%
Moreover, $D_{p}\mathcal{R}_{\Omega }(\varepsilon ,p_{1},p_{2})\bar{h}%
:X^{k+\log }\rightarrow \mathbb{R}$ is continuous, where $\bar{h}%
=(h_{1},h_{2})$ and $p=(p_{1},p_{2})$.
\end{lemma}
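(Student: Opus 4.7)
The strategy is to apply the implicit function theorem to eliminate the two scalar parameters $(\Omega,\bar{x})$ by projecting the equation $F^{1}=0$ onto its first Fourier mode $\sin(x)$. Let $\Pi_{1}\colon Y^{k-1}\to \mathbb{R}^{2}$ denote the continuous linear projection extracting the coefficient of $\sin(x)$ in each component, and set
\begin{equation*}
\Psi(\varepsilon,\Omega,\bar{x},p_{1},p_{2}):=\Pi_{1}F^{1}(\varepsilon,\Omega,\bar{x},p_{1},p_{2})\in \mathbb{R}^{2}.
\end{equation*}
By Lemmas \ref{lem2-2} and \ref{lem2-3}, $\Psi$ is $C^{1}$ on $(-\tfrac{1}{2},\tfrac{1}{2})\times \mathbb{R}\times \mathbb{R}\times B_{r}$. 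Lemma \ref{critical} gives $\Psi(0,\Omega^{\ast},\bar{x}^{\ast},0,0)=0$, and Lemma \ref{lem2-4} identifies $D_{(\Omega,\bar{x})}\Psi(0,\Omega^{\ast},\bar{x}^{\ast},0,0)$ with the $2\times 2$ matrix displayed in \eqref{deriv22}, whose determinant equals $-\tfrac{\gamma_{1}+\gamma_{2}}{2d^{2}}\neq 0$ under the hypothesis $\gamma_{1}+\gamma_{2}\neq 0$. The implicit function theorem then produces a neighborhood of $(0,0,0)$ on which $\Psi=0$ uniquely determines continuous (in fact $C^{1}$) functions $\Omega=\Omega(\varepsilon,p_{1},p_{2})$ and $\bar{x}=\bar{x}(\varepsilon,p_{1},p_{2})$ taking the value $(\Omega^{\ast},\bar{x}^{\ast})$ at the base point.

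The crucial observation for the explicit form of the expansion is that $\Pi_{1}F^{1}(0,\Omega,\bar{x},p_{1},p_{2})$ is in fact \emph{independent of $(p_{1},p_{2})$}. Indeed, at $\varepsilon=0$ the subterms $F_{i23}$, $F_{i24}$ from \eqref{2-2} and $F_{i32}$--$F_{i34}$ from \eqref{2-3} all carry an explicit $\varepsilon$ factor and hence vanish; by \eqref{Fi3}, $F_{i3}|_{\varepsilon=0}=\tfrac{\gamma_{3-i}}{2d^{2}}\sin(x)$; and the remaining contribution $F_{i1}+F_{i21}+F_{i22}$ at $\varepsilon=0$ depends on $p$ only through integrals linear in $p_{i}$, which coincide with $D_{p_{i}}F_{i}^{1}(0,\Omega,\bar{x},0,0)p_{i}$. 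By Lemma \ref{lem2-4} the $\sin(jx)$ expansion of this linearization starts at $j=2$, reflecting a cancellation at the first Fourier mode between the two integrals in \eqref{gatf2}. Consequently $\Psi(0,\Omega,\bar{x},p_{1},p_{2})=\Psi(0,\Omega,\bar{x},0,0)$ for every admissible $(p_{1},p_{2})$, so the IFT solution satisfies $\Omega(0,p_{1},p_{2})\equiv \Omega^{\ast}$ and $\bar{x}(0,p_{1},p_{2})\equiv \bar{x}^{\ast}$ throughout the neighborhood.

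Given this $p$-independence at $\varepsilon=0$, the fundamental theorem of calculus in the $\varepsilon$-variable yields
\begin{equation*}
\Omega(\varepsilon,p_{1},p_{2})-\Omega^{\ast}=\varepsilon\int_{0}^{1}\partial_{\varepsilon}\Omega(t\varepsilon,p_{1},p_{2})\,dt=:\varepsilon\mathcal{R}_{\Omega}(\varepsilon,p_{1},p_{2}),
\end{equation*}
and analogously for $\bar{x}$, with $\mathcal{R}_{\Omega}$ (and its counterpart for $\bar{x}$) continuous by the $C^{1}$-regularity of the IFT solution. Continuity of the Gateaux derivative $D_{p}\mathcal{R}_{\Omega}(\varepsilon,p_{1},p_{2})\bar{h}$ follows from the joint $C^{1}$-dependence of $\Omega$ on $(p_{1},p_{2})$, while continuity of $\tilde{F}^{1}$ into $Y^{k-1}$ is obtained by composing with Lemma \ref{lem2-2}. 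The only genuinely delicate step is verifying the $(p_{1},p_{2})$-independence of $\Pi_{1}F^{1}|_{\varepsilon=0}$: without it one could only obtain $\Omega=\Omega^{\ast}+o(1)$ rather than the clean $\varepsilon$-linear remainder required for the second (infinite-dimensional) application of the implicit function theorem in the next step of the program.
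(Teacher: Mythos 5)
Your proof is essentially correct, but it takes a genuinely different route from the paper's. The paper does not invoke a finite-dimensional implicit function theorem at all: it writes the vanishing of the first Fourier coefficient of $F^{1}$ as the explicit $2\times 2$ system \eqref{vsing}--\eqref{xsing}, in which every $p$-dependent and $\varepsilon$-dependent correction already carries an explicit prefactor $\varepsilon|\varepsilon|$ or $\varepsilon$ coming from the decompositions \eqref{2-4}, \eqref{2-8} and \eqref{Fi3}. Adding the two component equations cancels the bilinear terms $\Omega\bar{x}$, which makes the system solvable by elementary elimination; the representation $\Omega=\Omega^{\ast}+\varepsilon\mathcal{R}_{\Omega}$ (and then $\bar{x}=\bar{x}^{\ast}+\varepsilon(\cdots)$ after back-substitution) is read off directly from those prefactors, with no differentiation in $\varepsilon$ ever performed. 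Your two key structural inputs are the same as the paper's --- invertibility of the $(\Omega,\bar{x})$-Jacobian \eqref{deriv22} (equivalently, solvability of the $2\times2$ system when $\gamma_{1}+\gamma_{2}\neq 0$) and the vanishing of the $\sin(x)$-mode of the $p$-linear part of $F^{1}|_{\varepsilon=0}$, which you correctly identify as the decisive cancellation --- so the two arguments are morally equivalent; yours is more conceptual, the paper's more computational.

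The one caveat is your final step. The fundamental theorem of calculus in $\varepsilon$, and the claim that the IFT branch is $C^{1}$ in $\varepsilon$, both require $\partial_{\varepsilon}\Psi$ to exist and be continuous, i.e.\ $C^{1}$ dependence of $F^{1}$ on $\varepsilon$. Lemmas \ref{lem2-2} and \ref{lem2-3} establish continuity of $F^{1}$ and of its partial derivatives in $\Omega$, $\bar{x}$, $p_{1}$, $p_{2}$ only; differentiability in $\varepsilon$ (which involves the $|\varepsilon|$ factors and the desingularized terms in \eqref{fi3eps}) is not among the cited facts. As you yourself note, mere continuity plus $\Omega(0,p)=\Omega^{\ast}$ only gives $\Omega=\Omega^{\ast}+o(1)$, not the $\varepsilon$-linear remainder. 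This is fixable --- either by checking $\partial_{\varepsilon}F^{1}$ directly, or more cheaply by doing what the paper does and extracting the factor of $\varepsilon$ from the explicit expansions \eqref{2-4}, \eqref{2-8}, \eqref{Fi3} rather than from a mean-value argument --- but as written it is the one step of your proof that rests on a regularity property not established in the results you cite.
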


\begin{proof}
It is enough to find $\Omega (\varepsilon ,p_{1},p_{2}):(-\frac{1}{2},\frac{1%
}{2})\times \mathbb{R}\times \mathbb{R}\times B_{r}\rightarrow \mathbb{R}$
and $\bar{x}(\varepsilon ,p_{1},p_{2})$ for the space where the first Fourier
coefficient vanishes in the functional $F^{1}(\varepsilon ,\Omega
(\varepsilon ,p_{1},p_{2}),\bar{x},p_{1},p_{2})$. From \eqref{2-8} and Lemma %
\ref{lem2-4}, we can deduce that $\varepsilon |\varepsilon |\tilde{\mathcal{R%
}}_{2}(\varepsilon ,p_{1},p_{2})$ is the contribution of $F_{i2}$ to the
first Fourier coefficient, with $\tilde{\mathcal{R}}_{2}$ the contribution
of $\mathcal{R}_{2}$ in \eqref{2-8}. Hence, we can deduce that $\Omega
(\varepsilon ,p_{1},p_{2})$ satisfy the following equation, for $i=1,2,$
\begin{equation}
-\Omega \left( (-1)^{i}\bar{x}+(i-1)d+\varepsilon |\varepsilon |\tilde{%
\mathcal{R}}_{1}(\varepsilon ,p_{1},p_{2})\right) +\varepsilon |\varepsilon |%
\tilde{\mathcal{R}}_{2}(\varepsilon ,p_{1},p_{2})+\frac{\gamma _{3-i}}{2d^{2}%
}+\varepsilon \tilde{\mathcal{R}}_{3}(\varepsilon ,p_{1},p_{2})=0,
\label{coef}
\end{equation}%
where $\tilde{\mathcal{R}}_{i}(\varepsilon ,p_{1},p_{2})$ $(i=1,2,3)$ is the
contribution of $\mathcal{R}_{i}$ to the first Fourier coefficient by %
\eqref{2-4} and \eqref{Fi3}. Therefore, \eqref{coef} is satisfied if and
only if, the following two equations holds
\begin{equation}
-\Omega \left( \bar{x}+\varepsilon |\varepsilon |\tilde{\mathcal{R}}%
_{1}(\varepsilon ,p_{1},p_{2})\right) +\varepsilon |\varepsilon |\tilde{%
\mathcal{R}}_{2}(\varepsilon ,p_{1},p_{2})+\frac{\gamma _{2}}{2d^{2}}%
+\varepsilon \tilde{\mathcal{R}}_{3}(\varepsilon ,p_{1},p_{2})=0
\label{vsing}
\end{equation}%
and
\begin{equation}
-\Omega \left( -\bar{x}+d+\varepsilon |\varepsilon |\tilde{\mathcal{R}}%
_{1}(\varepsilon ,p_{1},p_{2})\right) +\varepsilon |\varepsilon |\tilde{%
\mathcal{R}}_{2}(\varepsilon ,p_{1},p_{2})+\frac{\gamma _{1}}{2d^{2}}%
+\varepsilon \tilde{\mathcal{R}}_{3}(\varepsilon ,p_{1},p_{2})=0.
\label{xsing}
\end{equation}%
From this, we can add the above two equations in order to obtain
\begin{equation*}
\Omega (\varepsilon ,p_{1},p_{2}):=\Omega ^{\ast }+\varepsilon \mathcal{R}%
_{\Omega }(\varepsilon ,p_{1},p_{2}),
\end{equation*}%
where $\mathcal{R}_{\Omega }(\varepsilon ,p_{1},p_{2}):X^{k+\log
}\rightarrow \mathbb{R}$ is a continuous function.\newline
\newline
Similarly, substituting $\Omega ^{\ast }$ into \eqref{vsing} we arrive at
\begin{equation*}
\bar{x}(\varepsilon ,p_{1},p_{2}):=\bar{x}^{\ast }+\varepsilon \mathcal{R}%
_{\Omega }(\varepsilon ,p_{1},p_{2}),
\end{equation*}%
where $\mathcal{R}_{\Omega }(\varepsilon ,p_{1},p_{2}):X^{k+\log
}\rightarrow \mathbb{R}$ is also a continuous function. From Lemma \ref%
{lem2-3}, we have that $D_{p}{\mathcal{R}}_{i}(\varepsilon ,p_{1},p_{2})$ $%
(i=1,2,3)$ are continuous. Thus, we can conclude that $D_{p}\mathcal{R}%
_{\Omega }(\varepsilon ,p_{1},p_{2})\bar{h}:X^{k+\log }\rightarrow \mathbb{R}
$ is also continuous.
\end{proof}

Now, we provide a detailed and complete version of the first part of Theorem %
\ref{thm:informal}, which is concerned to the co-rotating case with unequal
circulations.

\begin{theorem}[SQG equations]
\label{main} The following assertions hold true.

\begin{enumerate}
\item There exist $\varepsilon _{0}>0$ and a unique $C^{1}$ function $%
g=(\Omega ,\bar{x},p_{1},p_{2}):[-\varepsilon _{0},\varepsilon
_{0}]\longrightarrow \mathbb{R}\times \mathbb{R}\times B_{r}$ such that
\begin{equation*}
F^{1}\Big(\varepsilon ,\Omega (\varepsilon ),\bar{x}(\varepsilon
),p_{1}(\varepsilon ),p_{2}(\varepsilon )\Big)=0.
\end{equation*}%
Moreover,
\begin{equation*}
\Big(\Omega (0),\bar{x}(0),p_{1}(0),p_{2}(0)\Big)=\big(\Omega ^{\ast },\bar{x}^{\ast
},0,0\big).
\end{equation*}%
In other words, the solution passes through the origin.

\item For each $\varepsilon\in[-\varepsilon_0,\varepsilon_0]\backslash \{0\}$%
, it always holds
\begin{equation*}
\big(p_1(\varepsilon),p_2(\varepsilon)\big)\neq (0,0).
\end{equation*}

\item If $(\varepsilon, p_1,p_2)$ is a solution to $F^1(\varepsilon,g)=0 $,
then $(-\varepsilon, \tilde{p}_1,\tilde{p}_2)$ is also a solution, where
\begin{equation*}
\tilde{p}_i(x) = p_i(-x), \quad i=1,2.
\end{equation*}

\item For all $\varepsilon \in [-\varepsilon_0, \varepsilon]\backslash\{0\}$%
, the set of solutions $R_i(x)$ parameterizes convex patches at least of
class $C^1$.
\end{enumerate}
\end{theorem}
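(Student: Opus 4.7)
My plan is to establish the four items sequentially, with the main work an application of the implicit function theorem to $F^1$ at $(\varepsilon,g)=(0,g_{0})$, where $g_{0}=(\Omega^{\ast},\bar x^{\ast},0,0)$. All hypotheses are already in place: Lemma~\ref{lem2-2} gives continuity of $F^1$ from $\left(-\tfrac{1}{2},\tfrac{1}{2}\right)\times\mathbb{R}\times\mathbb{R}\times B_r$ into $Y^{k-1}$; Lemma~\ref{lem2-3} produces the continuous partial Gateaux derivatives in $\Omega$, $\bar x$, $p_1$, $p_2$, hence $C^1$ regularity; Lemma~\ref{critical} identifies $g_{0}$ as a zero of $F^1(0,\cdot)$; and Lemma~\ref{lem2-4} exhibits $D_g F^1(0,g_{0})$ explicitly and verifies that it is an isomorphism from $\mathbb{R}\times\mathbb{R}\times X^{k+\log}$ onto $Y^{k-1}$, using $\gamma_1+\gamma_2\neq 0$ and the strict monotonicity of $\{j\sigma_j\}$. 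The IFT then gives item (1): there exist $\varepsilon_0>0$ and a unique $C^1$ curve $\varepsilon\mapsto g(\varepsilon)$ through $g_{0}$ solving $F^1(\varepsilon,g(\varepsilon))=0$ on $[-\varepsilon_0,\varepsilon_0]$.

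For item (2), I would argue by contradiction: suppose $(p_1(\varepsilon_{\star}),p_2(\varepsilon_{\star}))=(0,0)$ for some small $\varepsilon_{\star}\neq 0$. Specialising the decomposition $F_i^1=F_{i1}+F_{i2}+F_{i3}$ at $p_i\equiv 0$, the block $F_{i1}$ collapses to a pure $\sin x$ multiple, and the parity considerations used in the proof of Lemma~\ref{lem2-2} force $F_{i2}|_{p=0}\equiv 0$; hence every Fourier mode $\sin(jx)$ with $j\geq 2$ of $F^1|_{p=0}$ must come from $F_{i3}|_{p=0}$. Pushing the Taylor expansion~\eqref{2-5} applied to $F_{i3}|_{p=0}$ one further step beyond the order displayed in~\eqref{fi3eps} yields a non-zero $\sin(2x)$ coefficient of order $\varepsilon/d^{3}$ with explicit dependence on $\gamma_{3-i}$, $b_i$, $b_{3-i}$, which contradicts $F^1(\varepsilon_{\star},\Omega,\bar x,0,0)=0$. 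This is the step I expect to require the most careful bookkeeping, since it demands one extra order of the nonlocal interaction expansion beyond what was needed merely for continuity.

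Item (3) follows from a symmetry of the functional itself. Under the simultaneous transformation $\varepsilon\mapsto -\varepsilon$ and $p_i(x)\mapsto \tilde p_i(x):=p_i(-x)$, the parametrization $z_i(x)$ is reflected across the horizontal axis; performing the substitution $y\mapsto -y$ in every integral appearing in~\eqref{rot} then identifies $F^1(-\varepsilon,\Omega,\bar x,\tilde p_1,\tilde p_2)(x)$ with $-F^1(\varepsilon,\Omega,\bar x,p_1,p_2)(-x)$. Since $Y^{k-1}$ consists of odd sine series, this shows that $(\varepsilon,p_1,p_2)$ and $(-\varepsilon,\tilde p_1,\tilde p_2)$ are simultaneously zeros of $F^1$. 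Finally, for item (4), the chain of embeddings $X^{k+\log}\subset H^k\subset C^{2}$ (valid on $\mathbb{T}$ for $k\geq 3$) gives $R_i=1+\varepsilon^2 b_i^2 p_i(\varepsilon)\to 1$ in $C^2$ as $\varepsilon\to 0$. The polar convexity criterion $R_i^2+2(R_i')^2-R_i R_i''>0$ equals $1$ at $R_i\equiv 1$, so by continuity it remains positive for all $|\varepsilon|\leq\varepsilon_0$ after possibly shrinking $\varepsilon_0$, which yields convex patches of class at least $C^1$.
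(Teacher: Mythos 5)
Your proposal is correct and follows essentially the same route as the paper: item (1) via the implicit function theorem using Lemmas \ref{lem2-2}, \ref{lem2-3}, \ref{critical} and \ref{lem2-4}; item (2) by observing that $F_{i2}$ vanishes at $p=0$ and extracting a nonvanishing $\sin(2x)$ coefficient from one further step of the Taylor expansion \eqref{2-5} applied to $F_{i3}$, exactly as in \eqref{fi3eps}; item (3) by the substitution $y\mapsto -y$; and item (4) by the positivity of the signed curvature, whose numerator is precisely your polar criterion $R_i^2+2(R_i')^2-R_iR_i''$. The only cosmetic discrepancy is the power of $\varepsilon$ you assign to the $\sin(2x)$ coefficient in item (2), which does not affect the conclusion.
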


\begin{proof}
\textbf{(1)} According to Lemmas \ref{lem2-2} and \ref{lem2-3}, we can
guarantee that $F:(-\frac{1}{2},\frac{1}{2})\times \mathbb{R}\times \mathbb{R%
}\times B_{r}\rightarrow {Y^{k-1}}$ is a $C^{1}$ function. Also, an
application of Lemma \ref{lem2-4} gives that $D_{g}F^{1}\big(0,g_{0}\big)%
:X^{k+\log }\rightarrow {Y^{k-1}}$ is in fact an isomorphism. Now, we can
apply the implicit function theorem and obtain that there exist $\varepsilon
_{0}>0$ and a unique $C^{1}$ parametrization given by $g=(\Omega
,\bar{x},p_{1},p_{2}):[-\varepsilon _{0},\varepsilon _{0}]\rightarrow B_{r}$
such that the set of solutions
\begin{equation*}
F^{1}\big(\varepsilon ,\Omega (\varepsilon ),\bar{x}(\varepsilon
),p_{1}(\varepsilon ),p_{2}(\varepsilon )\big)=0
\end{equation*}%
is not empty. Furthermore, we can set $\varepsilon =0$ such that
\begin{equation*}
\big(\Omega ,\bar{x},p_{1},p_{2}\big)(0)=(\Omega ^{\ast },\bar{x}^{\ast },0,0).
\end{equation*}

\textbf{(2)} Now, we want to show that for each small enough $\varepsilon
\in \lbrack -\varepsilon _{0},\varepsilon _{0}]\setminus \{0\}$ and any
angular velocity $\Omega $, we never get an asymmetric co-rotating vortex
patch pair with values $p_{1}=0$ and $p_{2}=0$. To this end, one needs to
prove that
\begin{equation*}
F^{1}(\varepsilon ,\Omega ,\bar{x},0,0)=(F^1_{1}(\varepsilon ,\Omega ,\bar{x},0,0),F^1_{2}(\varepsilon ,\Omega ,\bar{x},0,0))\neq 0.
\end{equation*}%
Notice that the functional $F_{i}^{1}$ can be split as
\begin{equation*}
F_{i}^{1}(\varepsilon ,\Omega ,\bar{x},0,0)=F_{i1}+F_{i2}+F_{i3},
\end{equation*}%
where $i=1,2.$ For this purpose, once again we resort to the Taylor formula %
\eqref{2-5} in \eqref{fi3eps} in order to obtain a suitable expansion of $%
\mathcal{R}_{3}(\varepsilon ,p_{1},p_{2})$. More precisely,
\begin{equation*}
\begin{split}
\mathcal{R}_{3}& (\varepsilon ,p_{1},p_{2})=\frac{3}{4}\frac{\gamma _{3-i}}{%
b_{3-i}}\int \!\!\!\!\!\!\!\!\!\;{}-{}\int_{0}^{1}\int_{0}^{1}\frac{%
(\varepsilon tB)B\sin (x-y)dsdtdy}{\left( A+\varepsilon stB\right) ^{\frac{5%
}{2}}}+o(\varepsilon ) \\
& \,{\resizebox{.95\hsize}{!}{$ = \frac{3}{4}\frac{\gamma_{3-i}\varepsilon}{
b_{3-i}} \int\!\!\!\!\!\!\!\!\!\; {}-{} \frac{ B^2\sin(x-y)dy}{
A^{\frac{5}{2}}}-\frac{15}{8}\frac{\gamma_{3-i}}{ b_{3-i}}
\int\!\!\!\!\!\!\!\!\!\; {}-{}\int_0^1 \int_0^1 \frac{(\varepsilon st
B)(\varepsilon tB)B\sin(x-y)drdsdtdy}{\left( A+\varepsilon rstB
\right)^{\frac{5}{2}}} +o(\varepsilon)$}} \\
& =\frac{3}{4}\frac{\gamma _{3-i}\varepsilon }{b_{3-i}}\int
\!\!\!\!\!\!\!\!\!\;{}-{}\frac{4d^{2}(b_{3-i}\cos (y)+b_{i}\cos (x))^{2}\sin
(x-y)dy}{d^{5}}+o(\varepsilon ) \\
& =\frac{3\varepsilon }{d^{3}}\frac{\gamma _{3-i}}{b_{3-i}}\int
\!\!\!\!\!\!\!\!\!\;{}-{}b_{3-i}^{2}\cos ^{2}(y)\sin (x-y)dy+\frac{%
3\varepsilon }{d^{3}}\frac{\gamma _{3-i}}{b_{3-i}}\int
\!\!\!\!\!\!\!\!\!\;{}-{}b_{i}^{2}\cos ^{2}(x)\sin (x-y)dy \\
& \qquad +\frac{3\varepsilon }{d^{3}}\frac{\gamma _{3-i}}{b_{3-i}}\int
\!\!\!\!\!\!\!\!\!\;{}-{}2b_{i}b_{3-i}\cos (x)\cos (y)\sin
(x-y)dy+o(\varepsilon ) \\
& =\frac{6\varepsilon }{d^{3}}\gamma _{3-i}b_{i}\cos (x)\int
\!\!\!\!\!\!\!\!\!\;{}-{}\cos (y)\sin (x-y)dy+o(\varepsilon ) \\
& =\frac{3\varepsilon }{d^{3}}\gamma _{3-i}b_{i}\sin (x)\cos
(x)+o(\varepsilon ). \\
&
\end{split}%
\end{equation*}%
From the above, we can conclude that
\begin{equation*}
\mathcal{R}_{3}(\varepsilon ,p_{1},p_{2})=\varepsilon \bar{C}\sin
(2x)+o(\varepsilon ),
\end{equation*}%
where $\bar{C}$ is a positive constant that depends only on $\gamma _{3-i}$,
$\alpha $, $d$ and $b_{i}$. By using \eqref{2-4} and \eqref{2-8}, we have
\begin{equation*}
F_{i1}(\varepsilon ,\Omega ,\bar{x},0,0)=-\Omega \left(
(-1)^{i}\bar{x}+(i-1)d\right) \sin (x)\ \text{\ \ \ and}\ \ \
F_{i2}(\varepsilon ,\Omega,\bar{x},0,0)=0.
\end{equation*}%
Thus, we can conclude that
\begin{equation*}
\begin{split}
  F^1_{i}(\varepsilon ,\Omega ,\bar{x},0,0)&=(F_{i1}+F_{i2}+F_{i3})(\varepsilon ,\Omega ,\bar{x},0,0)\\
   &=-\Omega \left(
(-1)^{i}\bar{x}+(i-1)d\right) \sin (x)+\frac{\gamma _{3-i}\sin (x)}{2d^{2}}+\varepsilon \bar{C}\sin
(2x)
\end{split}
\end{equation*}%
Thus,
\[F^1_{i}(\varepsilon ,\Omega ,\bar{x},0,0)\neq 0,\ \ \ \ \ \forall \,\varepsilon
\in \lbrack -\varepsilon _{0},\varepsilon _{0}]\setminus \{0\},\]
when $\varepsilon _{0}$ is chosen small enough. Consequently, $F_{i}$ is
different from zero if $\varepsilon \neq 0$ is small enough.\newline

\textbf{(3)} It is enough to verify that
\begin{equation*}
F_{i}^{1}(\varepsilon ,\Omega
,\bar{x},p_{1},p_{2})(-x)=-F_{i}^{1}(-\varepsilon ,\Omega ,\bar{x},\tilde{p}_{1},%
\tilde{p}_{2})(x)\text{,}\quad \text{\textnormal{for} }i=1,2,
\end{equation*}%
where $F_{i}$ is a sum between \eqref{2-4}, \eqref{2-8} and \eqref{Fi3}. Let
$\tilde{p}_{i}(x)=p_{i}(-x)$. We need to verify that if $(\varepsilon
,p_{1},p_{2})$ is a solution to $F_{i}^{1}(\varepsilon ,p_{1},p_{2})=0$,
then $(-\varepsilon ,\tilde{p_{1}},\tilde{p_{2}})$ is also a solution of the
same functional. Considering the change $y$ to $-y$ in the nonlinear
functional $F_{i}^{1}(\varepsilon ,\Omega ,\bar{x},p_{1},p_{2})$ and using
that $p_{i}$ are even functions, we deduce that $\Omega (\varepsilon
,p_{1},p_{2})=\Omega (-\varepsilon ,\tilde{p_{1}},\tilde{p_{2}})$. Inserting
it into $F_{i}^{1}(\varepsilon ,\Omega ,\bar{x},p_{1},p_{2}),$ we conclude
that $F_{i}^{1}(-\varepsilon ,\Omega ,\bar{x},\tilde{p_{1}},\tilde{p_{2}})=0$.%
\newline

\textbf{(4)} Now we prove that the set of solutions $R_{i}$ parameterizes
convex patches. To this end, we only need to compute the signed curvature of
the interface of the patch $z_{i}(x)=(z_{i}^{1}(x),z_{i}^{2}(x))=((1+%
\varepsilon ^{2}b_{i}^{2}p_{i}(x))\cos (x),(1+\varepsilon
^{2}b_{i}^{2}p_{i}(x))\sin (x))$ at the point $x$. Indeed, we have that
\begin{equation*}
\partial _{x}z_{i}^{1}(x)=\varepsilon ^{2}b_{i}^{2}p_{i}^{\prime }(x)\cos
(x)-(1+\varepsilon ^{2}b_{i}^{2}p_{i}(x))\sin (x),
\end{equation*}%
\begin{equation*}
\partial _{xx}z_{i}^{1}(x)=\varepsilon ^{2}b_{i}^{2}p_{i}^{\prime \prime
}(x)\cos (x)-\varepsilon ^{2}b_{i}^{2}p_{i}^{\prime }(x)\sin
(x)-(1+\varepsilon ^{2}b_{i}^{2}p_{i}(x))\cos (x)-\varepsilon
^{2}b_{i}^{2}p_{i}^{\prime }(x)\sin (x),
\end{equation*}%
and
\begin{equation*}
\partial _{x}z_{i}^{2}(x)=\varepsilon ^{2}b_{i}^{2}p_{i}^{\prime }(x)\sin
(x)+(1+\varepsilon ^{2}b_{i}^{2}p_{i}(x))\cos (x),
\end{equation*}%
\begin{equation*}
\partial _{xx}z_{i}^{2}(x)=\varepsilon ^{2}b_{i}^{2}p_{i}^{\prime \prime
}(x)\sin (x)+\varepsilon ^{2}b_{i}^{2}p_{i}^{\prime }(x)\cos
(x)-(1+\varepsilon ^{2}b_{i}^{2}p_{i}(x))\sin (x)+\varepsilon
^{2}b_{i}^{2}p_{i}^{\prime }(x)\cos (x).
\end{equation*}%
Then, inserting the above computations into the formula of the signed
curvature $\kappa _{i}(x)$ yields
\begin{equation*}
\begin{split}
\kappa _{i}(x)& =\frac{\partial _{xx}z_{i}^{2}(x)\partial
_{x}z_{i}^{1}(x)-\partial _{xx}z_{i}^{1}(x)\partial _{x}z_{i}^{2}(x)}{%
(\partial _{x}z_{i}^{1}(x)+\partial _{x}z_{i}^{2}(x))^{3/2}} \\
& =\frac{(1+\varepsilon ^{2}b_{i}^{2}p_{i}(x))^{2}+2\varepsilon
^{4}b_{i}^{4}(f_{i}^{\prime }(x))^{2}-\varepsilon ^{2}b_{i}^{2}f_{i}^{\prime
\prime }(x)(1+\varepsilon ^{2}b_{i}^{2}p_{i}(x))}{\left( (1+\varepsilon
^{2}b_{i}^{2}p_{i}(x))^{2}+\varepsilon ^{4}b_{i}^{4}(f_{i}^{\prime
}(x))^{2}\right) ^{\frac{3}{2}}}=\frac{1+O(\varepsilon )}{1+O(\varepsilon )}%
>0,
\end{split}%
\end{equation*}%
for $\varepsilon $ small and each $x\in \lbrack 0,2\pi )$. The quantity
obtained is non-negative if $\varepsilon \in (-1/2,1/2)$. Then, the signed
curvature is strictly positive and we obtain the desired result. Hence the
proof of Theorem \ref{main} is completed.
\end{proof}

\subsection{gSQG equations ($1<\protect\alpha<2$)}

Let $B_{r}$ be an open neighborhood of zero in the space $X^{k+\alpha -1}$,
i.e.,
\begin{equation*}
B_{r}:=\left\{ f\in X^{k+\alpha -1}:\ \Vert f\Vert _{X^{k+\alpha
-1}}<r\right\}
\end{equation*}%
with $0<r<1$. The next lemma gives us the continuity of $F_{i}$. This result
is equivalent to Lemma \ref{lem2-2} but in this case we are working on the
range $1<\alpha <2$.

\begin{lemma}
\label{lem2-7} The functional $F_i^\alpha(\varepsilon,\Omega_\alpha,\bar{x}_\alpha,p_1,p_2):
\left(-\frac{1}{2}, \frac{1}{2}\right)\times \mathbb{R} \times\mathbb{R}%
\times B_r \rightarrow Y^{k-1}$ is continuous.
\end{lemma}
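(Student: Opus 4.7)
The plan is to mirror the proof of Lemma \ref{lem2-2} line by line, making only those adjustments that the stronger singularity in the kernel forces. I would again split $F_i^\alpha = F_{i1}+F_{i2}+F_{i3}$ and handle each piece separately. For $F_{i1}$, the same computation as in \eqref{2-4} gives
\begin{equation*}
F_{i1}=-\Omega\bigl((-1)^{i}\bar{x}\sin(x)+(i-1)d\sin(x)+\varepsilon|\varepsilon|^{\alpha}\mathcal{R}_1(\varepsilon,p_1,p_2)\bigr),
\end{equation*}
where $\mathcal{R}_1$ is a continuous map from $(-1/2,1/2)\times \mathbb{R}\times \mathbb{R}\times B_r$ into $Y^{k-1}$, because the only way $\varepsilon$ enters is polynomially through $R_i=1+\varepsilon^{1+\alpha}b_i^{1+\alpha}p_i$. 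The continuity of $F_{i1}$ follows at once.

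For $F_{i2}$, I would first check that the range lies in $Y^{k-1}$ by the same parity argument (since $p_i$ is even, $p_i'$ is odd, and the substitution $y\mapsto -y$ shows $F_{i2}$ is odd in $x$). To handle the possible singularity at $\varepsilon=0$, I would apply Taylor's formula \eqref{2-5} with $A=4\sin^2(\tfrac{x-y}{2})$ and $B$ equal to the remaining terms in the denominator, obtaining
\begin{equation*}
\frac{1}{D_\alpha(p_i)^{\alpha/2}}=\frac{1}{(4\sin^2(\tfrac{x-y}{2}))^{\alpha/2}}+\varepsilon|\varepsilon|^{\alpha}b_i^{1+\alpha}\,K_\varepsilon(x,y),
\end{equation*}
where $K_\varepsilon$ is non-singular at $x=y$. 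The analog of \eqref{F21}--\eqref{2-8} then gives
\begin{equation*}
F_{i2}=\frac{C_\alpha\gamma_i}{2^{\alpha}}\int\!\!\!\!\!\!\!\!\!\;{}-{}\frac{p_i(x-y)\sin(y)\,dy}{|\sin(\tfrac{y}{2})|^{\alpha}}-\frac{C_\alpha\gamma_i}{2^{\alpha-1}}\int\!\!\!\!\!\!\!\!\!\;{}-{}\frac{(p_i'(x)-p_i'(x-y))\cos(y)\,dy}{|\sin(\tfrac{y}{2})|^{\alpha}}+\varepsilon|\varepsilon|^{\alpha}\mathcal{R}_2(\varepsilon,p_1,p_2).
\end{equation*}
The key estimate is on the most singular contribution $F_{i22}$: taking $\partial^{k-1}$ in $x$ produces an integrand of the form $(\partial^{k}p_i(x)-\partial^{k}p_i(y))/|\sin(\tfrac{y}{2})|^\alpha$ (plus lower order terms), whose $L^2$ norm is precisely controlled by $\|p_i\|_{X^{k+\alpha-1}}$ by definition of that space. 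This is where $X^{k+\alpha-1}$ replaces $X^{k+\log}$ and is the one ingredient that has to change. For the continuity statement, I would write
\begin{equation*}
F_{i22}(\varepsilon,p_{i1})-F_{i22}(\varepsilon,p_{i2})=I_1+I_2
\end{equation*}
as in Lemma \ref{lem2-2}; $I_1$ is immediate, while $I_2$ is handled via the mean value identity \eqref{2-7}, noting that for $1<\alpha<2$ one has $D_\alpha(g)\sim |x-y|^2$ near the diagonal so the quotient in \eqref{2-7} remains dominated by $|\sin(\tfrac{x-y}{2})|^{-\alpha}$ times a bounded factor, which keeps us inside $X^{k+\alpha-1}$.

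For $F_{i3}$, the denominator is bounded below by a positive constant because of the separation $d>2(b_1+b_2)$, so the same Taylor expansion around $A=d^2$, $B=-2d(b_{3-i}R_{3-i}(y)\cos(y)+b_iR_i(x)\cos(x))$ performed in \eqref{fi3eps} yields
\begin{equation*}
F_{i3}=\frac{\alpha C_\alpha\gamma_{3-i}}{2d^{1+\alpha}}\sin(x)+\varepsilon\,\mathcal{R}_3(\varepsilon,p_1,p_2),
\end{equation*}
with $\mathcal{R}_3:(-1/2,1/2)\times B_r\to Y^{k-1}$ continuous (the factor $1/(2d^{1+\alpha})$ is precisely what matches $U_\alpha^*$ and $\Omega_\alpha^*$ given in \eqref{ang}--\eqref{tra}). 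Combining the three pieces yields continuity of $F_i^\alpha$. The main obstacle I anticipate is the bookkeeping in the $F_{i22}$ estimate: one must verify that \emph{every} term produced by differentiating $D_\alpha(p_i)^{-\alpha/2}$ and the difference quotient \eqref{2-7} really is absorbed by the $X^{k+\alpha-1}$ norm, since the kernel $|\sin(\tfrac{y}{2})|^{-\alpha}$ is genuinely non-integrable and the cancellation $\partial^k p_i(x)-\partial^k p_i(y)$ is the only thing saving us; this is the step where the restriction $k\geq 3$ and the precise definition of $X^{k+\alpha-1}$ are used.
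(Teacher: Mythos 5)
Your proposal follows essentially the same route as the paper's proof: the decomposition $F_i^\alpha=F_{i1}+F_{i2}+F_{i3}$, the parity argument, the Taylor formula \eqref{2-5} to remove the singularity at $\varepsilon=0$, the $\partial^{k-1}$ estimate of the most singular piece $F_{i22}$ in terms of the $X^{k+\alpha-1}$ norm, and the mean-value identity \eqref{2-7} for the continuity step. The only slip is in your explicit leading-order formula for $F_{i2}$: the first-order contribution coming from the denominator produces an extra factor $\bigl(1-\tfrac{\alpha}{2}\bigr)$ in front of the first integral (cf. \eqref{2-21}), which is irrelevant for the continuity claim itself but must be kept if the expansion is later reused to compute the linearization.
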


\begin{proof}
Similarly to the case $\alpha =1$, it is straightforward to see that $%
F_{i1}:\left( -\frac{1}{2},\frac{1}{2}\right) \times \mathbb{R}\times
\mathbb{R}\times B_{r}\rightarrow Y^{k-1}$ is continuous and
\begin{equation}
F_{i1}=-\Omega_\alpha \left( (-1)^{i}\bar{x}_\alpha\sin (x)+(i-1)d\sin (x)+\varepsilon
|\varepsilon |^{\alpha }\mathcal{R}_{1}(\varepsilon ,p_{1},p_{2})\right) ,
\label{2-20}
\end{equation}%
where $\mathcal{R}_{1}(\varepsilon ,p_{1},p_{2}):\left( -\frac{1}{2},\frac{1%
}{2}\right) \times B_{r}\rightarrow Y^{k-1}$ is continuous. Since $p_{i}\in
B_{r}$, by a simple change of variables $y$ to $-y$, it is easy to see that $%
F_{i2}(\varepsilon ,p_{1},p_{2})$ is odd. By the application of the Taylor
formula \eqref{2-5} in $F_{i21}$, we can treat with the possible
singularities at $\varepsilon =0$. As was noticed in the proof of Lemma \ref%
{lem2-2}, the singular term is $F_{i22}$. Hence, we need to compute the $%
\partial ^{k-1}$ derivatives of $F_{i22}:$
\begin{equation*}
\begin{split}
& {\resizebox{.98\hsize}{!}{$
\partial^{k-1}F_{i22}=C_\alpha\gamma_i\int\!\!\!\!\!\!\!\!\!\;
{}-{}\frac{(\partial^kp_i(y)-\partial^kp_i(x))\cos(x-y)dy}{\left(
|\varepsilon|^{2+2\alpha}b_i^{2+2\alpha}\left(p_i(x)-p_i(y)\right)^2+4(1+%
\varepsilon|\varepsilon|^\alpha b^{1+\alpha}_i
p_i(x))(1+\varepsilon|\varepsilon|^\alpha b^{1+\alpha}_i
p_i(y))\sin^2\left(\frac{x-y}{2}\right)\right)^{\frac{\alpha}{2}}}$}} \\
& \ \ -{\resizebox{.94\hsize}{!}{$\alpha
C_\alpha\gamma_i\varepsilon|\varepsilon|^\alpha
b_i^{1+\alpha}\int\!\!\!\!\!\!\!\!\!\; {}-{}\frac{\cos(x-y)}{\left(
|\varepsilon|^{2+2\alpha}b_i^{2+2\alpha}\left(p_i(x)-p_i(y)\right)^2+4(1+%
\varepsilon|\varepsilon|^\alpha b^{1+\alpha}_i
p_i(x))(1+\varepsilon|\varepsilon|^\alpha b^{1+\alpha}_i
p_i(y))\sin^2\left(\frac{x-y}{2}\right)\right)^{1+\frac{\alpha}{2}}}$}} \\
& \ \ \ {\resizebox{.98\hsize}{!}{$\times
\left(\varepsilon|\varepsilon|^\alpha
b^{1+\alpha}_i(p_i(x)-p_i(y))(p'_i(x)-p'_i(y))+2((1+\varepsilon|%
\varepsilon|^\alpha
b_i^{1+\alpha}p_i(x))p'_i(y)+(1+\varepsilon|\varepsilon|^\alpha
b_i^{1+\alpha}p_i(y))p'_i(x))\sin^2(\frac{x-y}{2})\right)$}} \\
& \ \ \ \times (\partial ^{k-1}p_{i}(y)-\partial ^{k-1}p_{i}(x))dy+l.o.t.
\end{split}%
\end{equation*}%
By Sobolev embedding, we know that $\Vert \partial ^{i}p_{i}\Vert
_{L^{\infty }}\leq C\Vert p_{i}\Vert _{X^{k+\alpha -1}}<\infty $ for $%
i=0,1,2 $ and $k\geq 3$. Therefore, by combining the mean value theorem
together with H\"{o}lder inequality, one gets
\begin{equation*}
\begin{split}
\left\Vert \partial ^{k-1}F_{i22}\right\Vert _{L^{2}}& \leq C\left\Vert \int
\!\!\!\!\!\!\!\!\!\;{}-{}\frac{\partial ^{k}p_{i}(x)-\partial ^{k}p_{i}(y)}{%
|\sin (\frac{x-y}{2})|^{\alpha }}dy\right\Vert _{L^{2}}+C\left\Vert \int
\!\!\!\!\!\!\!\!\!\;{}-{}\frac{\partial ^{k-1}p_{i}(x)-\partial
^{k-1}p_{i}(y)}{|\sin (\frac{x-y}{2})|^{\alpha }}dy\right\Vert _{L^{2}} \\
& \leq C\Vert p_{i}\Vert _{X^{k+\alpha -1}}+C\Vert p_{i}\Vert _{X^{k+\alpha
-2}}<\infty .
\end{split}%
\end{equation*}%
Hence, we deduce that the range of $F_{i2}$ belongs to $Y^{k-1}$.

By the computations and notations obtained in Lemma \ref{lem2-2},
specifically in \eqref{2-7}, one only need to obtain the continuity of the
most singular term $F_{i22}$. In other words, for $f_{i1},f_{i2}\in B_{r}$,
we have $\Vert F_{i22}(\varepsilon ,p_{i1})-F_{i22}(\varepsilon
,p_{i2})\Vert _{Y^{k-1}}\leq C\Vert p_{i1}-p_{i2}\Vert _{X^{k+\alpha -1}}$.
In fact, the continuity of the nonlinear functional $F_{i2}(\varepsilon
,p_{1},p_{2}):\left( -\frac{1}{2},\frac{1}{2}\right) \times B_{r}\rightarrow
Y^{k-1}$ can be treated in the same manner as in the proof of Lemma \ref%
{lem2-2}. Finally, by invoking the Taylor formula \eqref{2-5}, we obtain the
following expression of $F_{i2}$ which will be used later
\begin{equation}
{\resizebox{.94\hsize}{!}{$
F_{i2}=C_\alpha(1-\frac{\alpha}{2})\gamma_i\int\!\!\!\!\!\!\!\!\!\; {}-{}
\frac{p_i(x-y)\sin(y)dy}{\left(4\sin^2(\frac{y}{2})\right)^{\frac{%
\alpha}{2}}}-C_\alpha\gamma_i\int\!\!\!\!\!\!\!\!\!\; {}-{}
\frac{(p'_i(x)-p'_i(x-y))\cos(y)dy}{\left(4\sin^2(\frac{y}{2})\right)^{%
\frac{\alpha}{2}}}+\varepsilon|\varepsilon|^\alpha
\mathcal{R}_{2}(\varepsilon,p_1,p_2),$}}  \label{2-21}
\end{equation}%
where $\mathcal{R}_{2}(\varepsilon ,p_{1},p_{2}):\left( -\frac{1}{2},\frac{1%
}{2}\right) \times B_{r}\rightarrow Y^{k-1}$ is continuous.

On the same spirit, one can prove that $F_{i3}(\varepsilon
,p_{1},p_{2}):\left( -\frac{1}{2},\frac{1}{2}\right) \times B_{r}\rightarrow
Y^{k-1}$ and it has the following closing form
\begin{equation}
F_{i3}=\frac{\gamma _{3-i}\sin (x)}{2d^{1+\alpha }}+|\varepsilon |^{\alpha }%
\mathcal{R}_{3}(\varepsilon ,p_{1},p_{2}),  \label{2-22}
\end{equation}%
where $\mathcal{R}_{3}(\varepsilon ,p_{1},p_{2}):\left( -\frac{1}{2},\frac{1%
}{2}\right) \times B_{r}\rightarrow Y^{k-1}$ is also continuous. Hence, the
proof is completed.
\end{proof}

Next, we obtain the linearization of the functional $F_{i}^{\alpha }$ and
show that in fact this functional is a $C^{1}$ function.

For $(\varepsilon ,\Omega_\alpha ,\bar{x}_\alpha,p_{1},p_{2})\in \left( -\frac{1}{2},\frac{1%
}{2}\right) \times \mathbb{R}\times \mathbb{R}\times B_{r}$ and $%
h_{i},h_{3-i}\in X^{k+\alpha -1}$, the Gateaux derivatives of the functional
$F_{i}^{\alpha }$ at the point $(\varepsilon ,\Omega_\alpha ,\bar{x}_\alpha,p_{1},p_{2})$
and directions $h_{i}$ and $h_{3-i}$ are given respectively by
\begin{equation*}
\partial _{p_{i}}F_{i}^{\alpha }(\varepsilon ,\Omega_\alpha
,\bar{x}_\alpha,p_{1},p_{2})h_{i}:=\lim\limits_{t\rightarrow 0}\frac{1}{t}\left(
F_{i}^{1}(\varepsilon ,\Omega_\alpha ,\bar{x}_\alpha,p_{i}+th_{i})-F_{i}^{1}(\varepsilon
    ,\Omega_\alpha,\bar{x}_\alpha ,p_{1},p_{2})\right)
\end{equation*}%
and
\begin{equation*}
\partial _{p_{3-i}}F_{i}^{\alpha }(\varepsilon ,\Omega_\alpha
,\bar{x}_\alpha,p_{1},p_{2})h_{3-i}:=\lim\limits_{t\rightarrow 0}\frac{1}{t}\left(
F_{i}^{1}(\varepsilon ,\Omega_\alpha ,\bar{x}_\alpha,p_{3-i}+th_{3-i})-F_{i}^{1}(\varepsilon
,\Omega_\alpha,\bar{x}_\alpha ,p_{1},p_{2})\right) .
\end{equation*}%
The continuity of $\partial _{p_{i}}F_{i}^{\alpha }(\varepsilon ,\Omega_\alpha
,\bar{x}_\alpha,p_{1},p_{2})h_{i}$ and $\partial _{p_{3-i}}F_{i}^{\alpha
}(\varepsilon ,\Omega_\alpha ,\bar{x}_\alpha,p_{1},p_{2})h_{3-i}$ is obtained in the same
way as in Lemma \ref{lem2-3} by employing the same arguments as in the case $%
\alpha =1$. For this reason, the proof is left to the reader.

\begin{lemma}
\label{lem2-8b} For each $(\varepsilon ,\Omega_\alpha ,\bar{x}_\alpha,p_{1},p_{2})\in \left(
-\frac{1}{2},\frac{1}{2}\right) \times \mathbb{R}\times \mathbb{R}\times
B_{r}$, the partial derivatives of the functional $F$ that are given by  $\partial _{p_{3-i}}F_{i}^{\alpha
}(\varepsilon,\Omega_\alpha,\bar{x} ,p_{1},p_{2})h_{3-i}$, $\partial
_{p_{i}}F_{i}^{\alpha }(\varepsilon,\Omega_\alpha,\bar{x}_\alpha,p_{1},p_{2})h_{3-i},$ $%
\partial _{\Omega_\alpha }F_{i}^{\alpha }(\varepsilon,\Omega_\alpha,\bar{x}_\alpha ,p_{1},p_{2})$
and $\partial _{\bar{x}_\alpha}F_{i}^{\alpha }(\varepsilon,\Omega_\alpha,\bar{x}_\alpha
,p_{1},p_{2}) $ exist and are continuous.
\end{lemma}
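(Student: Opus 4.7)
The plan is to follow the blueprint of the proof of Lemma \ref{lem2-3}, but with two systematic replacements: the Taylor exponent $1/2$ becomes $\alpha/2$, and the function space $X^{k+\log}$ is replaced by $X^{k+\alpha-1}$. These adjustments are forced by the Taylor identity \eqref{2-5} applied to $D_\alpha(p_i)$ with $\lambda=\alpha/2$, which produces denominators of the form $D_\alpha(p_i)^{\alpha/2}$ and $D_\alpha(p_i)^{1+\alpha/2}$. Throughout, one exploits that $D_\alpha(p_i)\sim |\sin(\frac{x-y}{2})|^{2}$ as $|x-y|\to 0$, so the denominators at the critical order behave like $|\sin(\frac{x-y}{2})|^{\alpha}$, which is exactly the singularity measured by the seminorm defining $X^{k+\alpha-1}$.

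First I would dispose of the easy derivatives. From \eqref{2-1} and the decomposition \eqref{2-20}, $F_{i1}$ is linear in $\Omega_\alpha$ and $\bar{x}_\alpha$, while $F_{i2}$ and $F_{i3}$ are independent of both. Hence $\partial_{\Omega_\alpha}F_i^\alpha$ and $\partial_{\bar x_\alpha}F_i^\alpha$ exist and are continuous in $(\varepsilon,\Omega_\alpha,\bar x_\alpha,p_1,p_2)$, since the coefficients depend continuously on $p_i$ through the smooth map $p_i\mapsto \varepsilon|\varepsilon|^{\alpha}b_i^{1+\alpha}p_i'\cos(x)/(1+\varepsilon|\varepsilon|^{\alpha}b_i^{1+\alpha}p_i)$. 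For $\partial_{p_i}F_{i1}h_i$, one mimics \eqref{2-10}, picking up a harmless prefactor $\varepsilon|\varepsilon|^{\alpha}$ that makes it a continuous map into $Y^{k-1}$, while $\partial_{p_{3-i}}F_{i1}h_{3-i}=0$ as in \eqref{2-10b}.

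The main work is the Gateaux derivative of $F_{i2}=F_{i21}+F_{i22}+F_{i23}+F_{i24}$ with respect to $p_i$. Differentiating each term formally inside the integral produces expressions analogous to \eqref{2-11}--\eqref{2-14} with $\tfrac12$ and $\tfrac32$ replaced by $\tfrac{\alpha}{2}$ and $1+\tfrac{\alpha}{2}$. The central limit
\[
\lim_{t\to 0}\left\Vert \frac{F_{i2j}(\varepsilon,p_i+th_i,p_2)-F_{i2j}(\varepsilon,p_1,p_2)}{t}-\partial_{p_i}F_{i2j}h_i\right\Vert_{Y^{k-1}}=0,\qquad j=1,2,3,4,
\]
again reduces to the most singular piece $F_{i22}$. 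Here I would invoke \eqref{2-7} to write
\[
\frac{1}{D_\alpha(p_i+th_i)^{\alpha/2}}-\frac{1}{D_\alpha(p_i)^{\alpha/2}}+\frac{\alpha t}{2}\,\frac{N(p_i,h_i)}{D_\alpha(p_i)^{1+\alpha/2}}=O\!\left(\frac{t^{2}}{|\sin(\tfrac{x-y}{2})|^{\alpha}}\right),
\]
where $N(p_i,h_i)$ collects the linear-in-$h_i$ pieces of $D_\alpha(p_i+th_i)-D_\alpha(p_i)$. After applying $\partial_x^{k-1}$ and using the Sobolev embedding $\Vert\partial^{j}p_i\Vert_{L^{\infty}}\le C\Vert p_i\Vert_{X^{k+\alpha-1}}$ for $j=0,1,2$, the resulting $L^{2}$-norm is bounded by $Ct\,\Vert p_i\Vert_{X^{k+\alpha-1}}$ thanks to the defining seminorm of $X^{k+\alpha-1}$. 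Continuity of $\partial_{p_i}F_{i22}h_i$ in the parameter $(\varepsilon,p_1,p_2)$ then follows by the very same mean-value manoeuvre applied to $D_\alpha(p_{i,1})^{-\alpha/2}-D_\alpha(p_{i,2})^{-\alpha/2}$.

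The cross-derivative $\partial_{p_{3-i}}F_i^\alpha h_{3-i}$ is felt only through $F_{i3}$, whose denominators stay bounded below since the patches are separated by the condition $d>2(b_{1}+b_{2})$. Hence differentiation can be performed naively under the integral sign, producing an expression of the form $|\varepsilon|^{\alpha}\gamma_{3-i}\partial_{p_{3-i}}\mathcal R_{3-i}(\varepsilon,p_1,p_2)h_{3-i}$ as in \eqref{f3h}, which is continuous in all arguments. The principal obstacle I anticipate is the tightness of the bound $|\sin(\frac{x-y}{2})|^{-\alpha}$ in the singular estimate: when $\alpha$ approaches $2$, losing a single derivative would be fatal, and this is precisely why the seminorm defining $X^{k+\alpha-1}$ is calibrated to absorb exactly this singularity. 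Assuming $k\ge 3$ and running the mean-value/Taylor machinery of Lemma \ref{lem2-3} with the exponent substitution, all four required derivatives exist as maps into $Y^{k-1}$ and depend continuously on $(\varepsilon,\Omega_\alpha,\bar x_\alpha,p_1,p_2)\in\left(-\tfrac12,\tfrac12\right)\times\mathbb R\times\mathbb R\times B_r$, completing the proof.
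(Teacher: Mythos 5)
Your proposal is correct and follows essentially the same route the paper takes: the paper itself gives no separate argument for this lemma, stating only that the derivatives are obtained "in the same way as in Lemma \ref{lem2-3}" with the exponent $\tfrac12\mapsto\tfrac{\alpha}{2}$ and the space $X^{k+\log}\mapsto X^{k+\alpha-1}$, which is precisely the substitution scheme you carry out. Your identification of the key points --- linearity of $F_{i1}$ in $\Omega_\alpha$ and $\bar{x}_\alpha$, the Taylor/mean-value estimate on $D_\alpha(p_i)^{-\alpha/2}$ controlled by the $X^{k+\alpha-1}$ seminorm for the singular term $F_{i22}$, and the non-degenerate denominators in $F_{i3}$ for the cross-derivative --- matches the intended argument.
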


In fact, proceeding as above, we can compute the following Gateaux
derivatives of the functional $F_{i}^{\alpha }(\varepsilon ,\Omega_\alpha
,\bar{x}_\alpha,p_{1},p_{2})$
\begin{equation*}
{\resizebox{.97\hsize}{!}{$
\partial_{p_i}F^\alpha_i(0,\Omega_\alpha,\bar{x}_\alpha,0,0)h_i=C_\alpha(1-\frac{\alpha}{2})%
\gamma_i\int\!\!\!\!\!\!\!\!\!\; {}-{}
\frac{h_i(x-y)\sin(y)dy}{\left(4\sin^2(\frac{y}{2})\right)^{\frac{%
\alpha}{2}}}-C_\alpha\gamma_i\int\!\!\!\!\!\!\!\!\!\; {}-{}
\frac{(h'_i(x)-h'_i(x-y))\cos(y)dy}{\left(4\sin^2(\frac{y}{2})\right)^{%
\frac{\alpha}{2}}},$}}
\end{equation*}%
and
\begin{equation*}
\partial _{p_{3-i}}F_{i}^{\alpha }(0,\Omega_\alpha ,\bar{x}_\alpha,0,0)h_{3-i}=0.
\end{equation*}%
Now we are in position to show the property below about point vortices.

\begin{lemma}
\label{traveling} For $\gamma _{1}\neq \gamma _{2}$, there exist two initial
point vortices $\gamma _{1}\pi \delta _{0}$ and $\gamma _{2}\pi \delta _{d}$
rotating uniformly about the centroid
\begin{equation}
\bar{x}^{\ast }_\alpha:=\frac{d\gamma _{2}}{\gamma _{1}+\gamma _{2}},  \label{posb}
\end{equation}%
with angular velocity
\begin{equation}
\Omega ^{\ast }_\alpha:=\frac{\alpha C_{\alpha }(\gamma _{1}+\gamma _{2})}{%
2d^{2+\alpha }}.  \label{velb}
\end{equation}
\end{lemma}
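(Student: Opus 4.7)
The plan is to mirror the proof of Lemma \ref{critical} almost verbatim, with the SQG ingredients replaced by their gSQG counterparts from Lemma \ref{lem2-7}. The two point-vortex configuration will be a solution exactly when $F^{\alpha}_{i}(0, \Omega_\alpha, \bar{x}_\alpha, 0, 0) = 0$ for $i=1,2$, so the strategy reduces to plugging $\varepsilon = 0$ and $p_1 = p_2 = 0$ into the three-piece decomposition $F^{\alpha}_{i} = F_{i1} + F_{i2} + F_{i3}$ and then solving the resulting two-by-two linear system for the pair $(\Omega_\alpha, \bar{x}_\alpha)$.

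I would first read off from \eqref{2-20} that $F_{i1}$ collapses to $-\Omega_\alpha\bigl((-1)^i \bar{x}_\alpha + (i-1)d\bigr)\sin x$, since the remainder carries a factor $\varepsilon|\varepsilon|^\alpha$ that kills it at $\varepsilon=0$. Next, I would observe that the expansion \eqref{2-21} is linear in $p_i$ up to a remainder of order $|\varepsilon|^{1+\alpha}$, so $F_{i2}$ vanishes identically at $(\varepsilon,p_1,p_2)=(0,0,0)$. Finally, for the crucial coupling term $F_{i3}$, the leading order coming from \eqref{2-22} is obtained by applying the Taylor formula \eqref{2-5} with $A = d^2$ and $B = -2d(b_{3-i}\cos y + b_i \cos x)$, exactly as in \eqref{fi3eps} but with exponent $\alpha/2$ in place of $1/2$; together with the elementary mean-value identities $\tfrac{1}{2\pi}\int_0^{2\pi}\cos y\,\sin(x-y)\,dy = \tfrac12 \sin x$ and $\tfrac{1}{2\pi}\int_0^{2\pi}\sin(x-y)\,dy = 0$, this yields the explicit value $\tfrac{\alpha C_\alpha \gamma_{3-i}}{2 d^{1+\alpha}}\sin x$.

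With these three contributions in hand, the system $F^{\alpha}_{i}(0, \Omega_\alpha, \bar{x}_\alpha, 0, 0) = 0$ becomes
\begin{align*}
-\Omega_\alpha\, \bar{x}_\alpha + \tfrac{\alpha C_\alpha\, \gamma_2}{2d^{1+\alpha}} &= 0, \\
-\Omega_\alpha(-\bar{x}_\alpha + d) + \tfrac{\alpha C_\alpha\, \gamma_1}{2d^{1+\alpha}} &= 0.
\end{align*}
Adding the two equations eliminates $\bar{x}_\alpha$ and gives $\Omega_\alpha = \Omega^{\ast}_\alpha = \tfrac{\alpha C_\alpha(\gamma_1 + \gamma_2)}{2 d^{2+\alpha}}$, and reinserting this into the first equation solves for $\bar{x}_\alpha = \bar{x}^{\ast}_\alpha = \tfrac{d\gamma_2}{\gamma_1 + \gamma_2}$, which are precisely \eqref{velb} and \eqref{posb}.

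There is no substantial obstacle here; the only step that demands care is tracking the constant $\alpha C_\alpha$ produced by the first-order Taylor expansion for the exponent $\alpha/2$, but that computation is already built into the derivation of \eqref{2-22} in Lemma \ref{lem2-7}. Thus the argument is a routine adaptation of the SQG case treated in Lemma \ref{critical}, and the nondegeneracy condition $\gamma_1+\gamma_2\neq 0$ is exactly what is needed in order to divide by $d$ when extracting $\Omega^{\ast}_\alpha$ from the summed equation.
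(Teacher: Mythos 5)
Your proposal is correct and follows essentially the same route as the paper: evaluate $F_i^{\alpha}$ at $(\varepsilon,p_1,p_2)=(0,0,0)$ using \eqref{2-20}--\eqref{2-22}, obtain the two-by-two linear system in $(\Omega_\alpha,\bar{x}_\alpha)$, add the equations to extract $\Omega_\alpha^{\ast}$, and substitute back to get $\bar{x}_\alpha^{\ast}$. You in fact supply more detail than the paper (which only says ``by similar computations as in the case $\alpha=1$''), and your explicit Taylor computation correctly produces the factor $\alpha C_\alpha$ in the leading term of $F_{i3}$, which is the constant actually used in the proof even though the displayed formula \eqref{2-22} omits it.
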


\begin{proof}
By using similar computations as in the case $\alpha =1$, we can easily
verify that
\begin{equation*}
F_{i}^{\alpha }(0,\Omega_\alpha ,\bar{x}_\alpha,0,0)=-\Omega_\alpha \left( -(-1)^{i}\bar{x}_\alpha\sin
(x)+(i-1)d\sin (x)\right) +\frac{\alpha C_{\alpha }\gamma _{3-i}\sin (x)}{%
2d^{1+\alpha }},\,\mbox{for}\,i=1,2.
\end{equation*}%
Therefore $F_{i}^{\alpha }(0,\Omega_\alpha ,\bar{x}_\alpha,0,0)=0$ if and only if, the
following two equations are satisfied
\begin{equation}
F_{1}^{\alpha }(0,\Omega_\alpha ,\bar{x}_\alpha,0,0)=-\Omega_\alpha \bar{x}_\alpha\sin (x)+\frac{\alpha
C_{\alpha }\gamma _{2}\sin (x)}{2d^{1+\alpha }}=0  \label{f1b}
\end{equation}%
and
\begin{equation}
F_{2}^{\alpha }(0,\Omega_\alpha ,\bar{x}_\alpha,0,0)=-\Omega_\alpha \left( -\bar{x}_\alpha\sin (x)+d\sin
(x)\right) +\frac{\alpha C_{\alpha }\gamma _{1}\sin (x)}{2d^{1+\alpha }}=0.
\label{f2b}
\end{equation}%
Adding the above equations, we obtain that
\begin{equation*}
\Omega_\alpha=\Omega _{\alpha }^{\ast }:=\frac{\alpha C_{\alpha }(\gamma
_{1}+\gamma _{2})}{2d^{2+\alpha }}.
\end{equation*}%
Now by replacing $\Omega ^{\ast }_\alpha$ into \eqref{f1b}, we arrive at
\begin{equation*}
\bar{x}_\alpha=\bar{x}^{\ast }_\alpha:=\frac{d\gamma _{2}}{\gamma _{1}+\gamma _{2}}.
\end{equation*}
\end{proof}

In the next lemma, we show that the linearization of the functional is in
fact an isomorphism in the case $\alpha \in (1,2)$.

\begin{lemma}
\label{lem2-9b} Let $h=(\beta _{1},\beta _{2},h_{1},h_{2})\in \mathbb{R}%
\times \mathbb{R}\times X^{k+\alpha -1}$ with $h_{i}(x)=\sum_{j=1}^{\infty
}a_{n}^{i}\sin (jx)$, $i=1,2$. Then, the linearization has the following
form
\begin{align*}
D_{g}F^{\alpha }(0,g_{0})h(x)=& -\dfrac{\beta _{1}d}{\gamma _{1}+\gamma _{2}}%
\begin{pmatrix}
\gamma _{2} \\
\gamma _{1}%
\end{pmatrix}%
\sin (x)-\dfrac{\alpha C_{\alpha }\beta _{2}(\gamma _{1}+\gamma _{2})}{%
2d^{2+\alpha }}%
\begin{pmatrix}
1 \\
-1%
\end{pmatrix}%
\sin (x) \\
& \qquad -\sum_{j=2}^{\infty }\,j\sigma _{j}%
\begin{pmatrix}
{\gamma _{1}}a_{j}^{1} \\
{\gamma _{2}}a_{j}^{2}%
\end{pmatrix}%
\sin (jx),
\end{align*}%
where
\begin{equation*}
g\triangleq (\Omega_\alpha ,\bar{x}_\alpha,p_{1},p_{2}),\quad g_{0}\triangleq (\Omega
_{\alpha }^{\ast },\bar{x}^{\ast }_\alpha,0,0),
\end{equation*}%
and
\begin{equation*}
\sigma _{j}=2^{\alpha -1}\frac{\Gamma (1-\alpha )}{(\Gamma (1-\frac{\alpha }{%
2}))^{2}}\left( \frac{\Gamma (1+\frac{\alpha }{2})}{\Gamma (2-\frac{\alpha }{%
2})}-\frac{\Gamma (j+\frac{\alpha }{2})}{\Gamma (1+j-\frac{\alpha }{2})}%
\right) .
\end{equation*}%
Moreover, the linearized operator $D_{g}F^{\alpha }(0,g_{0}):\mathbb{R}%
\times \mathbb{\ R}\times X^{k+\alpha -1}\rightarrow Y^{k-1}$ is an
isomorphism.
\end{lemma}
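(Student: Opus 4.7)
The plan closely mirrors the proof of Lemma \ref{lem2-4} (the SQG case), replacing the $\alpha=1$ computations by their $\alpha \in (1,2)$ analogues and exploiting that the functional decomposition $F_i^\alpha = F_{i1}+F_{i2}+F_{i3}$ separates the dependence on $(\Omega_\alpha,\bar{x}_\alpha)$ from the dependence on $(p_1,p_2)$. First, I would compute the four partial derivatives of $F^\alpha$ at $(0,g_0)$ one at a time. The Gateaux derivatives $\partial_{p_i}F_i^\alpha(0,\Omega_\alpha^*,\bar{x}_\alpha^*,0,0)h_i$ and $\partial_{p_{3-i}}F_i^\alpha(0,\Omega_\alpha^*,\bar{x}_\alpha^*,0,0)h_{3-i}=0$ are exactly the ones displayed just after Lemma \ref{lem2-8b}. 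Expanding $h_i(x)=\sum_{j=1}^{\infty}a_j^i\sin(jx)$ and evaluating the two resulting trigonometric integrals against $\sin(y)|\sin(y/2)|^{-\alpha}$ and $\cos(y)|\sin(y/2)|^{-\alpha}$ by Beta-function identities (precisely the computation carried out in \cite[Proposition 2.4]{Cas1}), I obtain
\[
\partial_{p_i}F_i^\alpha(0,g_0)h_i = -\gamma_i\sum_{j=1}^{\infty} j\sigma_j a_j^i \sin(jx),
\]
with $\sigma_j$ the Gamma-function expression stated in the lemma. An algebraic check shows $\sigma_1=0$, so the sum effectively starts at $j=2$.

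Second, since $F_{i2}$ and $F_{i3}$ contain no explicit dependence on $\Omega_\alpha$ or $\bar{x}_\alpha$, only $F_{i1}$ contributes to the remaining two partials. Differentiating \eqref{2-20} at $\varepsilon=0$, $p_i\equiv 0$, I read off
\[
\partial_{\Omega_\alpha}F_i^\alpha(0,g_0) = \bigl((-1)^i\bar{x}_\alpha^* - (i-1)d\bigr)\sin(x), \qquad \partial_{\bar{x}_\alpha}F_i^\alpha(0,g_0) = (-1)^i\Omega_\alpha^*\sin(x).
\]
Inserting $\bar{x}_\alpha^* = d\gamma_2/(\gamma_1+\gamma_2)$ and $\Omega_\alpha^* = \alpha C_\alpha(\gamma_1+\gamma_2)/(2d^{2+\alpha})$ from Lemma \ref{traveling} and assembling the four contributions yields exactly the claimed formula for $D_gF^\alpha(0,g_0)h$.

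For invertibility I would decompose the target $Y^{k-1}$ into its mode-$1$ subspace $\mathrm{span}\{(\sin x,0),(0,\sin x)\}$ and its complement. Because $\sigma_1=0$, the mode-$1$ coefficient of the image depends only on $(\beta_1,\beta_2)$ and is governed by a $2\times 2$ matrix whose columns are $-\tfrac{d}{\gamma_1+\gamma_2}\begin{pmatrix}\gamma_2\\\gamma_1\end{pmatrix}$ and $-\tfrac{\alpha C_\alpha(\gamma_1+\gamma_2)}{2d^{2+\alpha}}\begin{pmatrix}1\\-1\end{pmatrix}$; its determinant is a nonzero constant multiple of $(\gamma_1+\gamma_2)$, hence non-vanishing under the standing hypothesis $\gamma_1+\gamma_2\neq 0$, by the same direct computation as in \eqref{deriv22}. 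For modes $j\geq 2$ the map is diagonal: given $k(x)=\sum_{j\geq 1}(A_j,B_j)^{\!\top}\sin(jx)\in Y^{k-1}$, the preimage is obtained by $a_j^1 = -A_j/(j\sigma_j\gamma_1)$, $a_j^2 = -B_j/(j\sigma_j\gamma_2)$, which is well defined because $\sigma_j$ is strictly increasing in $j$ and $\sigma_2>0$.

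The main obstacle is the quantitative control of this inverse at high frequencies, which forces the choice of the ambient space $X^{k+\alpha-1}$. The asymptotics $\sigma_j \sim j^{\alpha-1}$ recorded in Lemma \ref{lem2-4} give $j\sigma_j\sim j^\alpha$, so the inversion gains exactly $\alpha$ derivatives relative to the target; together with the Sobolev part of the norm this is precisely the smoothing encoded by $X^{k+\alpha-1}$, and one concludes by standard convergence of the resulting series that $D_gF^\alpha(0,g_0)^{-1}:Y^{k-1}\to \mathbb{R}\times\mathbb{R}\times X^{k+\alpha-1}$ is bounded. Since the Beta-integral computation of $\sigma_j$ is already worked out in \cite[Proposition 2.4]{Cas1}, I would cite it rather than reproduce the details, keeping the argument as parallel to Lemma \ref{lem2-4} as possible.
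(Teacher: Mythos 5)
Your proposal follows essentially the same route as the paper's proof: it computes the four partial derivatives at $(0,g_{0})$ from the Gateaux derivatives recorded after Lemma \ref{lem2-8b}, diagonalizes the $(p_{1},p_{2})$-part via the Beta-integral computation of \cite[Proposition 2.4]{Cas1} (with $\sigma_{1}=0$ so the series starts at $j=2$), and reduces invertibility to the non-vanishing of the same $2\times 2$ determinant $-\tfrac{\alpha C_{\alpha}}{2d^{1+\alpha}}(\gamma_{1}+\gamma_{2})$ under $\gamma_{1}+\gamma_{2}\neq 0$. The only substantive difference is that you also verify boundedness of the inverse at high frequencies via $j\sigma_{j}\sim j^{\alpha}$, matching the gain of $\alpha$ derivatives from $Y^{k-1}$ to $X^{k+\alpha-1}$ --- a point the paper leaves implicit.
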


\begin{proof}
The Gateaux derivative of $F_{i}^{\alpha }(0,\Omega ,\bar{x},0,0)$ at the
directions $h_{i}$ and $h_{3-i}$ are given by
\begin{equation*}
{\resizebox{.98\hsize}{!}{$\partial _{p_{i}}F_{i}^{\alpha }(0,\Omega_\alpha ,\bar{x}_\alpha,0,0)h_{i}=C_{\alpha }(1-%
\frac{\alpha }{2})\gamma _{i}\int \!\!\!\!\!\!\!\!\!\;{}-{}\frac{%
h_{i}(x-y)\sin (y)dy}{\left( 4\sin ^{2}(\frac{y}{2})\right) ^{\frac{\alpha }{%
2}}}-C_{\alpha }\gamma _{i}\int \!\!\!\!\!\!\!\!\!\;{}-{}\frac{%
(h_{i}^{\prime }(x)-h_{i}^{\prime }(x-y))\cos (y)dy}{\left( 4\sin ^{2}(\frac{%
y}{2})\right) ^{\frac{\alpha }{2}}}$}},
\end{equation*}%
and
\begin{equation*}
\partial _{p_{3-i}}F_{i}^{\alpha }(0,\Omega_\alpha ,\bar{x}_\alpha,0,0)h_{3-i}=0.
\end{equation*}%
So, from Lemma \ref{lem2-7}, we can obtain the closed form of the functional
$F_{i}^{\alpha }(0,\Omega ,\bar{x},p_{1},p_{2})$ which is given by
\begin{align}
F_{i}^{\alpha }(0,\Omega_\alpha ,\bar{x}_\alpha,p_{1},p_{2})& =C_{\alpha }\gamma _{i}\int
\!\!\!\!\!\!\!\!\!\;{}-{}\frac{p_{i}(x-y)\sin (y)dy}{\left( 4\sin ^{2}(\frac{%
y}{2})\right) ^{\frac{\alpha }{2}}}-C_{\alpha }\gamma _{i}\int
\!\!\!\!\!\!\!\!\!\;{}-{}\frac{(p_{i}^{\prime }(x)-p_{i}^{\prime }(x-y))\cos
(y)dy}{\left( 4\sin ^{2}(\frac{y}{2})\right) ^{\frac{\alpha }{2}}}  \notag \\
& \qquad -\Omega_\alpha \left( (-1)^{i}\bar{x}\sin (x)-(i-1)d\sin (x)\right) +\frac{%
\alpha C_{\alpha }\gamma _{3-i}\sin (x)}{2d^{1+\alpha }},  \label{gatf}
\end{align}%
since $F^{\alpha }=(F_{1}^{\alpha },F_{2}^{\alpha })$ and $p=(p_{1},p_{2})$.
Hence, for $h=(h_{1},h_{2})\in X^{k+\alpha -1}$, one gets
\begin{equation*}
D_{p}F^{\alpha }(0,\Omega_\alpha ,\bar{x}_\alpha,0,0)h(x)=%
\begin{pmatrix}
-{\gamma _{1}}\sum\limits_{j=2}^{\infty }a_{j}^{1}\sigma _{j}j\sin (jx), \\
-{\gamma _{2}}\sum\limits_{j=2}^{\infty }a_{j}^{2}\sigma _{j}j\sin (jx),%
\end{pmatrix}%
\end{equation*}%
with
\begin{equation*}
\sigma _{j}=2^{\alpha -1}\frac{\Gamma (1-\alpha )}{(\Gamma (1-\frac{\alpha }{%
2}))^{2}}\left( \frac{\Gamma (1+\frac{\alpha }{2})}{\Gamma (2-\frac{\alpha }{%
2})}-\frac{\Gamma (j+\frac{\alpha }{2})}{\Gamma (1+j-\frac{\alpha }{2})}%
\right) .
\end{equation*}%
Differentiating (\ref{gatf}) with respect to $\Omega_\alpha $ around the point $%
(\Omega^{\ast }_\alpha,\bar{x}^{\ast }_\alpha)$ yields
\begin{equation*}
\partial_{\Omega_\alpha }F_{i}^{\alpha }(0,\Omega ^{\ast }_\alpha,\bar{x}^{\ast
}_\alpha,p_{1},p_{2})=(-1)^{i}\bar{x}^{\ast }_\alpha\sin (x)-(i-1)d\sin (x).
\end{equation*}%
In turn, differentiating (\ref{gatf}) with respect to $\bar{x}$ at $(\Omega
^{\ast }_\alpha,\bar{x}^{\ast }_\alpha),$ we get
\begin{equation*}
\partial _{\bar{x}}F_{i}^{\alpha }(0,\Omega ^{\ast }_\alpha,\bar{x}^{\ast
}_\alpha,p_{1},p_{2})=(-1)^{i}\Omega ^{\ast }_\alpha\sin (x).
\end{equation*}%
As before, for all $(\beta _{1},\beta _{2})\in \mathbb{R}^{2}$ it follows
that
\begin{equation}  \label{derivb}
D_{(\Omega_\alpha ,\bar{x}_\alpha)}F^{\alpha }(0,\Omega ^{\ast }_\alpha,\bar{x}^{\ast
}_\alpha,p_{1},p_{2})(\beta _{1},\beta _{2})=-\beta _{1}%
\begin{pmatrix}
\dfrac{\gamma _{2}d}{\gamma _{1}+\gamma _{2}} \\
\dfrac{\gamma _{1}d}{\gamma _{1}+\gamma _{2}}%
\end{pmatrix}%
\sin (x)-\beta _{2}%
\begin{pmatrix}
\dfrac{\alpha C_{\alpha }(\gamma _{1}+\gamma _{2})}{2d^{2+\alpha }} \\
-\dfrac{\alpha C_{\alpha }(\gamma _{1}+\gamma _{2})}{2d^{2+\alpha }}%
\end{pmatrix}%
\sin (x).
\end{equation}%
Therefore, similarly as in the case $\alpha =1,$ we arrive at
\begin{align}
D_{g}F^{\alpha }(0,g_{0})h(x)=& -\dfrac{\beta _{1}d}{\gamma _{1}+\gamma _{2}}%
\begin{pmatrix}
\gamma _{2} \\
\gamma _{1}%
\end{pmatrix}%
\sin (x)-\beta _{2}\dfrac{\alpha C_{\alpha }(\gamma _{1}+\gamma _{2})}{%
2d^{2+\alpha }}%
\begin{pmatrix}
1 \\
-1%
\end{pmatrix}%
\sin (x)  \label{dfb} \\
& \qquad -\sum_{j=2}^{\infty }\,j%
\begin{pmatrix}
{\gamma _{1}}a_{j}^{1} \\
{\gamma _{2}}a_{j}^{2}%
\end{pmatrix}%
\sin (jx).  \notag
\end{align}%
For $k\in Y^{k-1}$ satisfying the expansion
\begin{equation*}
k(x)=\sum_{j=1}^{\infty }%
\begin{pmatrix}
A_{j} \\
B_{j}%
\end{pmatrix}%
\sin (jx),
\end{equation*}%
we have that
\begin{align}
D_{g}& F^{\alpha }(0,g_{0})^{-1}k(x)=  \notag \\
& -\bigg(\dfrac{A_{1}+B_{1}}{d}\cos (x),\dfrac{2d^{2+\alpha }(A_{1}\gamma
_{1}-B_{1}\gamma _{2})}{\alpha C_{\alpha }(\gamma _{1}+\gamma _{2})^{2}}\cos
(x),\displaystyle\sum_{j=2}^{\infty }\dfrac{A_{j}}{j\gamma _{1}}\cos (jx),%
\displaystyle\sum_{j=2}^{\infty }\dfrac{B_{j}}{j\gamma _{2}}\cos (jx)\bigg).
\notag
\end{align}%
In order to prove that $D_{(\Omega_\alpha ,\bar{x}_\alpha,p_{1},p_{2})}F^{\alpha }(0,\Omega
^{\ast }_\alpha,\bar{x}^{\ast }_\alpha,p_{1},p_{2})$ is an isomorphism from $\mathbb{R}%
\times \mathbb{\ R}\times X^{k+\alpha -1}$ to $Y^{k-1},$ we only need to
show its invertibility. In fact, the restricted linear operator $D_{(\Omega_\alpha
,\bar{x}_\alpha,p_{1},p_{2})}F^{\alpha }(0,\Omega ^{\ast }_\alpha,\bar{x}^{\ast }_\alpha,p_{1},p_{2})$
is invertible if and only if the determinant of the two vectors obtained in (%
\ref{derivb}) is non-vanishing. This determinant is equal to $-\displaystyle
\frac{\alpha C_{\alpha }}{2d^{1+\alpha }}({\gamma _{1}+\gamma _{2}})$ and
different from zero for all $\gamma _{1}+\gamma _{2}\neq 0.$
\end{proof}

According to \eqref{2-20}, \eqref{2-21} and \eqref{2-22}, if we denote the
centroid by
\begin{equation}
\bar{x}^{\ast }_\alpha:=\frac{d\gamma _{2}}{\gamma _{1}+\gamma _{2}},  \label{2-23}
\end{equation}%
and the angular velocity by
\begin{equation*}
\Omega ^{\ast }_\alpha:=\frac{\gamma _{1}+\gamma _{2}}{2d^{2+\alpha }},
\end{equation*}%
then $F_{i}^{\alpha }(0,\Omega _{\alpha }^{\ast },\bar{x}^{\ast }_\alpha,0,0)=0$. As
before, we need to adjust the angular velocity $\Omega _{\alpha }$.

\begin{lemma}
\label{lem2-10} There exist
\begin{equation*}
\Omega _{\alpha }(\varepsilon ,p_{1},p_{2}):=\Omega _{\alpha }^{\ast
}+\varepsilon ^{\alpha }\mathcal{R}_{\Omega }(\varepsilon ,p_{1},p_{2})
\end{equation*}%
and
\begin{equation*}
\bar{x}_\alpha(\varepsilon ,p_{1},p_{2}):=\bar{x}^{\ast }_\alpha+\varepsilon ^{\alpha }%
\mathcal{R}_{\Omega }(\varepsilon ,p_{1},p_{2}),
\end{equation*}%
where $\Omega _{\alpha }^{\ast }$ and $\bar{x}^{\ast }_\alpha$ are as in \eqref{velb}
and \eqref{2-23}, respectively, and a continuous function $\mathcal{R}%
_{\Omega }(\varepsilon ,p_{1},p_{2}):X^{k+\alpha -1}\rightarrow \mathbb{R}$
such that $\tilde{F}^{\alpha }(\varepsilon ,p_{1},p_{2}):(-\frac{1}{2},\frac{%
1}{2})\times \mathbb{R}\times \mathbb{R}\times B_{r}\rightarrow Y^{k-1}$ is
given by
\begin{equation*}
\tilde{F}^{1}(\varepsilon,p_{1},p_{2}):=F^{1}(\varepsilon
,\Omega _{\alpha }(\varepsilon ,p_{1},p_{2}),\bar{x}_\alpha,p_{1},p_{2}).
\end{equation*}%
Moreover, $D_{p}\mathcal{R}_{\Omega }(\varepsilon ,p_{1},p_{2})\bar{h}%
:X^{k+\alpha -1}\rightarrow \mathbb{R}$ is continuous, where $\bar{h}%
=(h_{1},h_{2})$ and $p=(p_{1},p_{2})$.
\end{lemma}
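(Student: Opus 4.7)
The plan is to follow the same strategy as in Lemma \ref{lem2-5}, adapted to the new decompositions \eqref{2-20}, \eqref{2-21}, \eqref{2-22}, and to solve explicitly for $\Omega_\alpha$ and $\bar{x}_\alpha$ as perturbations of $(\Omega_\alpha^*,\bar{x}_\alpha^*)$ by projecting $F^\alpha=0$ onto the first Fourier mode $\sin(x)$. I would begin by noting that the $\sin(x)$-component of $F_{i2}(\varepsilon,p_1,p_2)$ is of size $\varepsilon|\varepsilon|^\alpha$: indeed, the leading (non-remainder) part of \eqref{2-21} acts as the Fourier multiplier $h \mapsto -\gamma_i\sum_{j\geq 1}j\sigma_j a_j^i \sin(jx)$, and by the explicit formula for $\sigma_j$ recorded in Lemma \ref{lem2-9b} one has $\sigma_1=0$, so the projection of this leading part onto $\sin(x)$ vanishes identically and only the remainder $\varepsilon|\varepsilon|^\alpha \mathcal{R}_2$ survives.

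Combining this observation with \eqref{2-20} and \eqref{2-22}, the requirement that the $\sin(x)$-coefficient of $F_i^\alpha$ vanish produces, for $i=1,2$, a pair of scalar equations of the schematic form
\begin{equation*}
-\Omega_\alpha\bigl((-1)^i \bar{x}_\alpha +(i-1)d + \varepsilon|\varepsilon|^\alpha \tilde{\mathcal{R}}_1^{(i)}\bigr) + \varepsilon|\varepsilon|^\alpha \tilde{\mathcal{R}}_2^{(i)} + \frac{\alpha C_\alpha \gamma_{3-i}}{2d^{1+\alpha}} + |\varepsilon|^\alpha \tilde{\mathcal{R}}_3^{(i)} = 0,
\end{equation*}
where each $\tilde{\mathcal{R}}_j^{(i)}=\tilde{\mathcal{R}}_j^{(i)}(\varepsilon,p_1,p_2)$ is the first Fourier coefficient of the corresponding $\mathcal{R}_j$, continuous on $(-1/2,1/2)\times B_r$ by Lemma \ref{lem2-7}. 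Adding the $i=1$ and $i=2$ equations kills the $\bar{x}_\alpha$ contribution and leaves a single scalar relation of the form $-\Omega_\alpha\bigl(d+O(|\varepsilon|^\alpha)\bigr)+\frac{\alpha C_\alpha(\gamma_1+\gamma_2)}{2d^{1+\alpha}}+O(|\varepsilon|^\alpha)=0$. For $\varepsilon_0$ sufficiently small the coefficient of $\Omega_\alpha$ stays bounded away from zero, so this can be solved explicitly (by geometric-series expansion of the denominator), yielding $\Omega_\alpha(\varepsilon,p_1,p_2)=\Omega_\alpha^*+\varepsilon^\alpha \mathcal{R}_\Omega(\varepsilon,p_1,p_2)$ with $\mathcal{R}_\Omega$ continuous. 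Substituting this value back into the $i=1$ equation and solving for $\bar{x}_\alpha$ by the same procedure produces $\bar{x}_\alpha(\varepsilon,p_1,p_2)=\bar{x}_\alpha^*+\varepsilon^\alpha \mathcal{R}_\Omega(\varepsilon,p_1,p_2)$, following the notational convention of Lemma \ref{lem2-5}.

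Continuity of $D_p\mathcal{R}_\Omega(\varepsilon,p_1,p_2)\bar{h}$ follows by differentiating the explicit algebraic formulas for $\Omega_\alpha$ and $\bar{x}_\alpha$ and invoking Lemma \ref{lem2-8b}, which supplies continuity of $D_p\tilde{\mathcal{R}}_j^{(i)}$ for each $j=1,2,3$; since the denominators in the division step remain bounded away from zero on a neighborhood, the chain rule delivers the claim. The main obstacle is, in my view, purely the bookkeeping of $\varepsilon$-powers: one must track that the dominant remainder in \eqref{2-22} sits at level $|\varepsilon|^\alpha$ rather than $\varepsilon|\varepsilon|^\alpha$, so that it is precisely this term which controls the size $\varepsilon^\alpha$ of the correction $\mathcal{R}_\Omega$. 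Once this accounting is done, the remainder of the argument is a direct transcription of the $\alpha=1$ proof.
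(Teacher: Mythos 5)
Your proposal is correct and follows essentially the same route as the paper: project $F^\alpha=0$ onto the $\sin(x)$ mode using the decompositions \eqref{2-20}--\eqref{2-22}, add the two resulting scalar equations to eliminate $\bar{x}_\alpha$ and solve for $\Omega_\alpha$, then back-substitute to get $\bar{x}_\alpha$, with continuity of $D_p\mathcal{R}_\Omega$ inherited from Lemma \ref{lem2-8b}. Your explicit justification that the leading multiplier part of $F_{i2}$ contributes nothing to the first Fourier mode (via $\sigma_1=0$) and your careful tracking of the $|\varepsilon|^\alpha$ versus $\varepsilon|\varepsilon|^\alpha$ levels are details the paper asserts without spelling out, but the argument is the same.
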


\begin{proof}
Proceeding similarly to the proof of Lemma \ref{lem2-5} and using equations %
\eqref{2-20},\eqref{2-21}, and \eqref{2-22}, we have that $\Omega _{\alpha
}(\varepsilon ,p_{1},p_{2})$ must satisfy
\begin{equation}
-\Omega _{\alpha }\left( \bar{x}+\varepsilon |\varepsilon |^{\alpha }\tilde{%
\mathcal{R}}_{1}(\varepsilon ,p_{1},p_{2})\right) +\varepsilon |\varepsilon
|^{\alpha }\tilde{\mathcal{R}}_{2}(\varepsilon ,p_{1},p_{2})+\frac{\gamma
_{2}}{2d^{1+\alpha }}+\varepsilon ^{\alpha }\tilde{\mathcal{R}}%
_{3}(\varepsilon ,p_{1},p_{2})=0  \label{vsingb}
\end{equation}%
and
\begin{equation}
-\Omega _{\alpha }\left( -\bar{x}+d+\varepsilon |\varepsilon |^{\alpha }\tilde{%
\mathcal{R}}_{1}(\varepsilon ,p_{1},p_{2})\right) +\varepsilon |\varepsilon
|^{\alpha }\tilde{\mathcal{R}}_{2}(\varepsilon ,p_{1},p_{2})+\frac{\gamma
_{1}}{2d^{1+\alpha }}+\varepsilon ^{\alpha }\tilde{\mathcal{R}}%
_{3}(\varepsilon ,p_{1},p_{2})=0,  \label{xsingb}
\end{equation}%
where $\tilde{\mathcal{R}}_{i}(\varepsilon ,p_{1},p_{2})$ $(i=1,2,3)$ is the
contribution associated to $\mathcal{R}_{i}$ to the first Fourier
coefficient. By adding \eqref{vsingb} and \eqref{xsingb}, and then
substituting $\Omega _{\alpha }^{\ast }$ into \eqref{vsingb}, we obtain that
\begin{equation*}
\Omega _{\alpha }(\varepsilon ,p_{1},p_{2}):=\Omega _{\alpha }^{\ast
}+\varepsilon ^{\alpha }\mathcal{R}_{\Omega }(\varepsilon ,p_{1},p_{2}),
\end{equation*}%
and
\begin{equation*}
\bar{x}_\alpha(\varepsilon ,p_{1},p_{2}):=\bar{x}_\alpha^{\ast }+\varepsilon ^{\alpha }%
\mathcal{R}_{\Omega }(\varepsilon ,p_{1},p_{2}),
\end{equation*}%
where $\mathcal{R}_{\Omega }(\varepsilon ,p_{1},p_{2}):X^{k+\alpha
-1}\rightarrow \mathbb{R}$ is continuous. By Lemma \ref{lem2-8b}, we know
that $D_{p}{\mathcal{R}}_{i}(\varepsilon ,p_{1},p_{2})$ $(i=1,2,3)$ is
continuous. Hence, we conclude that $D_{p}\mathcal{R}_{\Omega }(\varepsilon
,p_{1},p_{2})\bar{h}:X^{k+\alpha -1}\rightarrow \mathbb{R}$ is also
continuous.
\end{proof}

Now, we are in position to state our main result concerning to the
asymmetric co-rotating vortex pairs with unequal magnitudes associated to
the gSQG equations with $1<\alpha <2$.

\begin{theorem}[gSQG equations]
\label{mainb} The following assertions hold true.

\begin{enumerate}
\item There exists $\varepsilon_0>0$ and a unique $C^1$ function $%
g=(\Omega_\alpha,\bar{x}_\alpha,p_1,p_2): [-\varepsilon_0,\varepsilon_0]\longrightarrow
\mathbb{R}\times\mathbb{R}\times B_r$ such that
\begin{equation*}  \label{gf1f2b}
F^\alpha\Big(\varepsilon,
\Omega_\alpha(\varepsilon),\bar{x}_\alpha(\varepsilon),p_1(\varepsilon), p_2(\varepsilon)\Big)=0.
\end{equation*}
Moreover,
\begin{equation*}
\Big(\Omega_\alpha(0),\bar{x}_\alpha(0),p_1(0), p_2(0)\Big)=\big(\Omega^*_\alpha,\bar{x}^*_\alpha,0,0\big).
\end{equation*}
In other words, the solution passes through the origin.

\item For each $\varepsilon \in \lbrack -\varepsilon _{0},\varepsilon
_{0}]\backslash \{0\},$ one has
\begin{equation*}
\big(p_{1}(\varepsilon ),p_{2}(\varepsilon )\big)\neq (0,0).
\end{equation*}

\item If $(\varepsilon ,p_{1},p_{2})$ is a solution to $F^{\alpha
}(\varepsilon ,g)=0$, then $(-\varepsilon ,\tilde{p}_{1},\tilde{p}_{2})$ is
also a solution, where
\begin{equation*}
\tilde{p}_{i}(x)=p_{i}(-x),\quad i=1,2.
\end{equation*}

\item For all $\varepsilon \in \lbrack -\varepsilon _{0},\varepsilon
]\backslash \{0\}$, the set of solutions $R_{i}(x)$ parameterizes convex
patches at least of class $C^{1}$. 
\end{enumerate}
\end{theorem}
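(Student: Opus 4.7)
My plan is to mimic the four-step proof of Theorem \ref{main} above, replacing the SQG ingredients by their $\alpha\in(1,2)$ counterparts already prepared in Lemmas \ref{lem2-7}--\ref{lem2-10}. Concretely, for step (1) I would invoke the implicit function theorem for the reduced functional $\tilde F^\alpha(\varepsilon,p_1,p_2):=F^\alpha(\varepsilon,\Omega_\alpha(\varepsilon,p_1,p_2),\bar x_\alpha(\varepsilon,p_1,p_2),p_1,p_2)$ constructed in Lemma \ref{lem2-10}, using Lemma \ref{lem2-7} for continuity, Lemma \ref{lem2-8b} for the existence and continuity of the Gateaux derivatives (hence $C^1$ regularity), and Lemma \ref{lem2-9b} for the isomorphism property of $D_g F^\alpha(0,g_0)$ from $\mathbb R\times\mathbb R\times X^{k+\alpha-1}$ to $Y^{k-1}$. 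This gives $\varepsilon_0>0$ and a unique $C^1$ map $\varepsilon\mapsto(\Omega_\alpha(\varepsilon),\bar x_\alpha(\varepsilon),p_1(\varepsilon),p_2(\varepsilon))$ with $(\Omega_\alpha(0),\bar x_\alpha(0),p_1(0),p_2(0))=(\Omega_\alpha^\ast,\bar x_\alpha^\ast,0,0)$, as required.

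For step (2), the idea is to check that $F^\alpha(\varepsilon,\Omega_\alpha,\bar x_\alpha,0,0)\neq 0$ for every small $\varepsilon\neq 0$, which by uniqueness in (1) prevents the trivial profile $(p_1,p_2)=(0,0)$ from being a solution when $\varepsilon\neq 0$. Plugging $p_1=p_2=0$ into \eqref{2-20}--\eqref{2-22} kills $F_{i2}$ and collapses $F_{i1}$ to $-\Omega_\alpha((-1)^i\bar x_\alpha+(i-1)d)\sin(x)$. The crucial step is then to refine the expansion \eqref{2-22} by applying the Taylor formula \eqref{2-5} one more time to $F_{i31}$ in \eqref{fi3eps} with $A=d^2$ and $B=-2d(b_{3-i}\cos y+b_i\cos x)$: this produces a $\sin(2x)$ contribution from the quadratic-in-$B$ correction (coming from $\cos x\int\!\!\!\!\!\!\!\!\!\;{}-{}\cos y\sin(x-y)\,dy$), yielding $F_i^\alpha(\varepsilon,\Omega_\alpha,\bar x_\alpha,0,0)=\bigl[\text{degree-one in }\sin(x)\text{ terms}\bigr]+\varepsilon^\alpha\bar C\sin(2x)+o(\varepsilon^\alpha)$, with $\bar C\neq 0$. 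Since no choice of $(\Omega_\alpha,\bar x_\alpha)$ can cancel the $\sin(2x)$ mode, the right-hand side is nonzero for $0<|\varepsilon|\le\varepsilon_0$.

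Step (3) is symmetry: changing $y\mapsto -y$ inside every integral in \eqref{2-20}--\eqref{2-22} and using that $p_i$ is even (so $p_i(-x)=\tilde p_i(x)$ and $p_i'$ is odd) shows directly that $F_i^\alpha(\varepsilon,\Omega_\alpha,\bar x_\alpha,p_1,p_2)(-x)=-F_i^\alpha(-\varepsilon,\Omega_\alpha,\bar x_\alpha,\tilde p_1,\tilde p_2)(x)$, and the same manipulation applied to the fixed-point equations in Lemma \ref{lem2-10} gives $\Omega_\alpha(\varepsilon,p_1,p_2)=\Omega_\alpha(-\varepsilon,\tilde p_1,\tilde p_2)$ and the analogous identity for $\bar x_\alpha$; combining these shows that $(-\varepsilon,\tilde p_1,\tilde p_2)$ solves the same system. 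For step (4), I would plug $R_i(x)=1+\varepsilon^{1+\alpha}b_i^{1+\alpha}p_i(x)$ into the signed curvature
\begin{equation*}
\kappa_i(x)=\frac{\partial_{xx}z_i^2\,\partial_x z_i^1-\partial_{xx}z_i^1\,\partial_x z_i^2}{\bigl((\partial_x z_i^1)^2+(\partial_x z_i^2)^2\bigr)^{3/2}},
\end{equation*}
expand in $\varepsilon$, and obtain $\kappa_i(x)=\frac{1+O(\varepsilon^{1+\alpha})}{1+O(\varepsilon^{1+\alpha})}>0$ for $\varepsilon$ small, the bounds $\|p_i\|_{C^2}\lesssim\|p_i\|_{X^{k+\alpha-1}}$ coming from Sobolev embedding with $k\ge 3$. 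This gives strict convexity and $C^1$ regularity of the parametrized patches, closing the proof.

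The main obstacle I anticipate is step (2): the $\alpha\in(1,2)$ case requires tracking the order in $\varepsilon^\alpha$ versus $\varepsilon$ carefully in the expansion \eqref{fi3eps}, since the powers of $|\varepsilon|^\alpha$ in the various $F_{i3j}$ blocks of \eqref{2-3} are no longer the integer powers used in the SQG case. One must therefore verify that the $\sin(2x)$ Fourier coefficient produced by the second-order Taylor step is genuinely of order $\varepsilon^\alpha$ (and not canceled by a lower-order contribution), so that it dominates the $o(\varepsilon^\alpha)$ remainder uniformly for small $\varepsilon$.
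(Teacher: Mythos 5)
Your proposal is correct and follows essentially the same route as the paper: the implicit function theorem applied via Lemmas \ref{lem2-7}, \ref{lem2-8b} and \ref{lem2-9b} for part (1), the Taylor refinement of $\mathcal{R}_{3}$ producing an uncancelable $\sin(2x)$ mode for part (2), the $y\mapsto -y$ symmetry for part (3), and the signed-curvature computation for part (4). Your closing remark about tracking the $\varepsilon^{\alpha}$ versus $\varepsilon$ orders in the $\sin(2x)$ coefficient is a fair point of care that the paper itself treats only briefly.
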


\begin{proof}
\textbf{(1)} Following the same steps as in Theorem \ref{main}, we can
guarantee that $F^{\alpha }:(-\frac{1}{2},\frac{1}{2})\times \mathbb{R}%
\times \mathbb{R}\times B_{r}\rightarrow {Y^{k-1}}$ is a $C^{1}$ function
and, by applying Lemma \ref{lem2-9b}, we conclude that $D_{g}F^{\alpha }\big(%
0,g_{0}\big):X^{k+\alpha -1}\rightarrow {Y^{k-1}}$ is an isomorphism. Now,
we can apply the implicit function theorem and obtain that there exist $%
\varepsilon _{0}>0$ and a unique $C^{1}$ parametrization $g=(\Omega_\alpha
,\bar{x}_\alpha,p_{1},p_{2}):[-\varepsilon _{0},\varepsilon _{0}]\rightarrow B_{r}$
such that the set of solutions
\begin{equation*}
F^{\alpha}\big(\varepsilon ,\Omega_\alpha (\varepsilon ),\bar{x}_\alpha(\varepsilon
),p_{1}(\varepsilon ),p_{2}(\varepsilon )\big)=0
\end{equation*}%
is not empty. Moreover, we can set $\varepsilon =0$ such that the solution
passes through the origin, that is,
\begin{equation*}
\big(\Omega_\alpha ,\bar{x}_\alpha,p_{1},p_{2}\big)(0)=(\Omega _{\alpha }^{\ast
},\bar{x}^{\ast }_\alpha,0,0).
\end{equation*}%
\newline
\textbf{(2)} The proof of this part follows the same steps that the proof of
item (2) in Theorem \ref{main}. In this way, one can show that for each $%
\varepsilon \in \lbrack -\varepsilon _{0},\varepsilon _{0}]\setminus \{0\}$,
we have $(p_{1}(\varepsilon ),p_{2}(\varepsilon ))\neq (0,0)$. In fact, by
applying the Taylor formula in $\mathcal{R}_{3}$, we arrive at
\begin{equation*}
\mathcal{R}_{3}(\varepsilon ,p_{1},p_{2})=\varepsilon \bar{C}\sin
(2x)+o(\varepsilon ),
\end{equation*}%
where $\bar{C}$ is a positive constant depending only on $\gamma _{3-i}$, $%
\alpha $, $d$ and $b_{i}$. On the other hand, by using \eqref{2-20} and %
\eqref{2-21}, one gets
\begin{equation*}
F_{i1}(\varepsilon ,\Omega_\alpha ,\bar{x}_\alpha,0,0)=-\Omega_\alpha \left(
(-1)^{i}\bar{x}+(i-1)d\right) \sin (x)\ \ \ \ \ \mbox{and}\ \ \ \
F_{i2}(\varepsilon ,\Omega_\alpha ,0,0)=0.
\end{equation*}%
With the above arguments, we can conclude that
\begin{equation*}
F^{\alpha }(\varepsilon ,\Omega_\alpha ,\bar{x}_\alpha,0,0)\neq 0,\ \ \ \ \ \forall
\,\varepsilon \in \lbrack -\varepsilon _{0},\varepsilon _{0}]\setminus \{0\},
\end{equation*}%
as long as $\varepsilon _{0}$ is chosen sufficiently small. Consequently,
for $\varepsilon \neq 0$ small enough we have that $F_{i}^{\alpha }$ is not
equal to zero.\newline
\textbf{(3)} It is enough to verify that
\begin{equation*}
    F_{i}^{\alpha}(\varepsilon ,\Omega_\alpha
,\bar{x},p_{1},p_{2})(-x)=-F_{i}^{\alpha}(-\varepsilon ,\Omega_\alpha ,\bar{x},\tilde{p}_{1},%
\tilde{p}_{2})(x),\text{ \ for }i=1,2,
\end{equation*}%
where $F_{i}$ is a sum between \eqref{2-4}, \eqref{2-8} and \eqref{Fi3}. Let
$\tilde{p}_{i}(x)=p_{i}(-x)$. We need to verify that if $(\varepsilon
,p_{1},p_{2})$ is a solution to $F_{i}^{\alpha }(\varepsilon ,p_{1},p_{2})=0$%
, then $(-\varepsilon ,\tilde{p_{1}},\tilde{p_{2}})$ is also a solution of
the same functional. Replacing $y$ with $-y$ in the nonlinear functional $%
F_{i}^{1}(\varepsilon ,\Omega_\alpha ,\bar{x}_\alpha,p_{1},p_{2})$ and using that $p_{i}$
are even functions, we deduce that $\Omega_\alpha (\varepsilon ,p_{1},p_{2})=\Omega_\alpha
(-\varepsilon ,\tilde{p_{1}},\tilde{p_{2}})$. So, considering it in $%
F_{i}^{\alpha}(\varepsilon ,\Omega_\alpha ,\bar{x}_\alpha,p_{1},p_{2})$, it follows that $%
F_{i}^{\alpha}(-\varepsilon ,\Omega_\alpha ,\bar{x}_\alpha,\tilde{p_{1}},\tilde{p_{2}})=0$.%
\newline

\textbf{(4)} As in the case of the surface quasi-geostrophic equations ($%
\alpha =1$), we can obtain the desired result by computing the signed
curvature $\kappa _{i}$ $(i=1,2)$ of the interface of the patch
\begin{equation*}
z_{i}(x)=(z_{i}^{1}(x),z_{i}^{2}(x))=((1+\varepsilon
^{2}b_{i}^{2}p_{i}(x))\cos (x),(1+\varepsilon ^{2}b_{i}^{2}p_{i}(x))\sin (x))
\end{equation*}%
at the point $x$ and verify that it is positive for all $\varepsilon \in
\left( -\frac{1}{2},\frac{1}{2}\right) $.
\end{proof}

\section{Existence of traveling vortex pairs for the gSQG equations}

In this part we are concerning in finding a traveling global pair solutions
for the gSQG equations. In other words, it can be done by finding a zero of $%
G_{i}^{\alpha }(\varepsilon ,U,\gamma _{2},p_{1},p_{2})=0$ where the
functional $G_{i}^{\alpha }$ has the form
\begin{equation*}
G^\alpha_{i}(\varepsilon ,U,\gamma _{2},p_{1},p_{2})=G_{i1}+G_{i2}+G_{i3},
\end{equation*}%
with
\begin{equation*}
G_{i1}=-U\left( \sin (x)-\frac{\varepsilon |\varepsilon |^{\alpha
}b_{i}^{1+\alpha }p_{i}^{\prime }(x)}{(1+\varepsilon |\varepsilon |^{\alpha
}b_{i}^{1+\alpha }p_{i}(x))}\cos (x)\right) ,
\end{equation*}%
\begin{equation*}
\begin{split}
& {\resizebox{.98\hsize}{!}{$
G_{i2}=\frac{C_\alpha\gamma_i}{\varepsilon|\varepsilon|^{\alpha}b_i^{1+%
\alpha}}\int\!\!\!\!\!\!\!\!\!\; {}-{}
\frac{(1+\varepsilon|\varepsilon|^\alpha b^{1+\alpha}_i
p_i(y))\sin(x-y)dy}{\left(
|\varepsilon|^{2+2\alpha}b_i^{2+2\alpha}\left(p_i(x)-p_i(y)\right)^2+4(1+%
\varepsilon|\varepsilon|^\alpha b^{1+\alpha}_i
p_i(x))(1+\varepsilon|\varepsilon|^\alpha b^{1+\alpha}_i
p_i(y))\sin^2\left(\frac{x-y}{2}\right)\right)^{\frac{\alpha}{2}}}$}} \\
& +C_{\alpha }\gamma _{i}\,{\resizebox{.9\hsize}{!}{$
\int\!\!\!\!\!\!\!\!\!\; {}-{} \frac{(p'_i(y)-p'_i(x))\cos(x-y)dy}{\left(
|\varepsilon|^{2+2\alpha}b_i^{2+2\alpha}\left(p_i(x)-p_i(y)\right)^2+4(1+%
\varepsilon|\varepsilon|^\alpha b^{1+\alpha}_i
p_i(x))(1+\varepsilon|\varepsilon|^\alpha b^{1+\alpha}_i
p_i(y))\sin^2\left(\frac{x-y}{2}\right)\right)^{\frac{\alpha}{2}}}$}} \\
& +{\resizebox{.98\hsize}{!}{$\frac{C_\alpha
\gamma_i\varepsilon|\varepsilon|^\alpha b^{1+\alpha}_i
p'_i(x)}{1+\varepsilon|\varepsilon|^\alpha
b^{1+\alpha}_ip_i(x)}\int\!\!\!\!\!\!\!\!\!\; {}-{}
\frac{(p_i(x)-p_i(y))\cos(x-y)dy}{\left(
|\varepsilon|^{2+2\alpha}b_i^{2+2\alpha}\left(p_i(x)-p_i(y)\right)^2+4(1+%
\varepsilon|\varepsilon|^\alpha b^{1+\alpha}_i
p_i(x))(1+\varepsilon|\varepsilon|^\alpha b^{1+\alpha}_i
p_i(y))\sin^2\left(\frac{x-y}{2}\right)\right)^{\frac{\alpha}{2}}}$}} \\
& +{\resizebox{.98\hsize}{!}{$\frac{C_\alpha\gamma_i\varepsilon|%
\varepsilon|^\alpha b^{1+\alpha}_i }{1+\varepsilon|\varepsilon|^\alpha
b^{1+\alpha}_i p_i(x)}\int\!\!\!\!\!\!\!\!\!\; {}-{}
\frac{p'_i(x)p'_i(y)\sin(x-y)dy}{\left(
|\varepsilon|^{2+2\alpha}b_i^{2+2\alpha}\left(p_i(x)-p_i(y)\right)^2+4(1+%
\varepsilon|\varepsilon|^\alpha b^{1+\alpha}_i
p_i(x))(1+\varepsilon|\varepsilon|^\alpha b^{1+\alpha}_i
p_i(y))\sin^2\left(\frac{x-y}{2}\right)\right)^{\frac{\alpha}{2}}}$}}
\end{split}%
\end{equation*}

and
\begin{equation*}
\begin{split}
& {\resizebox{.98\hsize}{!}{$ G_{i3}=\frac{\gamma_{3-i} C_\alpha
(1+\varepsilon|\varepsilon|^\alpha b_{3-i}^{1+\alpha}
p_{3-i}(x))}{\varepsilon b_{3-i} (1+\varepsilon|\varepsilon|^\alpha
b^{1+\alpha}_i p_i(x)) }\int\!\!\!\!\!\!\!\!\!\; {}-{}
\frac{(1+\varepsilon|\varepsilon|^\alpha b_{3-i}^{1+\alpha}
p_{3-i}(y))\sin(x-y)dy}{\left|(\varepsilon b_{3-i}
R_{3-i}(y)\cos(y)+\varepsilon b_i R_i(x)\cos(x)-d)^2+(\varepsilon b_{3-i}
R_{3-i}(y)\sin(y)+\varepsilon b_i
R_i(x)\sin(x))^2\right|^{\frac{\alpha}{2}}}$}} \\
& {\resizebox{.98\hsize}{!}{$+\frac{\gamma_{3-i} C_\alpha \varepsilon
|\varepsilon|^{2\alpha}b_{3-i}^{1+2\alpha}}{1+\varepsilon|\varepsilon|^%
\alpha b^{1+\alpha}_i p_i(x)}\int\!\!\!\!\!\!\!\!\!\; {}-{} \frac{
p_{3-i}'(x)p_{3-i}'(y)\sin(x-y)dy}{\left|(\varepsilon b_{3-i}
R_{3-i}(y)\cos(y)+\varepsilon b_i R_i(x)\cos(x)-d)^2+(\varepsilon b_{3-i}
R_{3-i}(y)\sin(y)+\varepsilon b_i
R_i(x)\sin(x))^2\right|^{\frac{\alpha}{2}}}$}} \\
& {\resizebox{.98\hsize}{!}{$ +\frac{\gamma_{3-i} C_\alpha
|\varepsilon|^\alpha (1+\varepsilon|\varepsilon|^\alpha b_{3-i}^{1+\alpha}
p_{3-i}(x))}{1+\varepsilon|\varepsilon|^\alpha b^{1+\alpha}_i
p_i(x)}\int\!\!\!\!\!\!\!\!\!\; {}-{}
\frac{b_{3-i}^{\alpha}(p'_{3-i}(y)-p'_{3-i}(x))\cos(x-y)dy}{\left|(%
\varepsilon b_{3-i} R_{3-i}(y)\cos(y)+\varepsilon b_i
R_i(x)\cos(x)-d)^2+(\varepsilon b_{3-i} R_{3-i}(y)\sin(y)+\varepsilon b_i
R_i(x)\sin(x))^2\right|^{\frac{\alpha}{2}}}$}} \\
& {\resizebox{.98\hsize}{!}{$+\frac{\gamma_{3-i} C_\alpha
\varepsilon|\varepsilon|^{2\alpha}p_i'(x)}{1+\varepsilon|\varepsilon|^\alpha
b^{1+\alpha}_i p_i(x)}\int\!\!\!\!\!\!\!\!\!\; {}-{}
\frac{b_{3-i}^{1+2\alpha}(p_{3-i}(x)-p_{3-i}(y))\cos(x-y)dy}{\left|(%
\varepsilon b_{3-i}^{\alpha} R_{3-i}(y)\cos(y)+\varepsilon b_i
R_i(x)\cos(x)-d)^2+(\varepsilon b_{3-i} R_{3-i}(y)\sin(y)+\varepsilon b_i
R_i(x)\sin(x))^2\right|^{\frac{\alpha}{2}}}$}.}
\end{split}%
\end{equation*}%
The study concerning traveling vortex pair in the asymmetric case on the
range $1\leq \alpha <2$ follows the same spirit as in the co-rotating case.
Hence, we will sketch and omit the details.

Set
\begin{equation*}
B_r:=%
\begin{cases}
\{p\in X^{k+\log}: \ \left\Vert p\right\Vert_{X^{k+\log}}<1\}, & \quad
\alpha=1, \\
\{p\in X^{k+\alpha-1}: \ \left\Vert p\right\Vert_{X^{k+\alpha-1}}<1\}, &
\quad 1<\alpha<2.%
\end{cases}%
\end{equation*}

Notice that $G_{i}^{\alpha }(\varepsilon ,U,\gamma _{2},p_{1},p_{2})$ is
essentially equals to functional $F_{i}^{\alpha }(\varepsilon ,\Omega
,\bar{x},p_{1},p_{2})$ which was studied in the previous section. There is
only one difference between the both functionals, which is given by the
first term $G_{i1}^{\alpha }.$ Hence, we only state their main differences.
The first one is that the functional $F_{i}^{\alpha }$ depends on $\bar{x}$
but $G_{i}^{\alpha }$ does not. In the traveling case, it depends on $\gamma
_{1}$. Another difference appears when one needs to compute the critical
traveling velocity $U_{\alpha }^{\ast }$ as in the co-rotating asymmetric
case obtained in Lemma \ref{critical}. These differences will be stated on
the following lemma which is concerning the regularity of the functional $%
G^{\alpha }$.

\begin{lemma}
\label{prop12} The following assertions hold true.

\begin{enumerate}
\item The functional $G^\alpha$ can be extended to $C^1$ function from $\big(%
-\frac12,\frac12\big)\times \mathbb{R}\times\mathbb{R}\times B_r\to Y^{k-1}$.

\item There exist two initial point vortices $\gamma _{1}\pi \delta _{0}$
and $-\gamma _{2}\pi \delta _{(d,0)}$ translating uniformly with velocity
\begin{equation*}
U_{\alpha }^{\ast }\triangleq
\begin{cases}
\frac{\gamma _{1}}{2d^{2}}, & \quad \alpha =1, \\
\frac{\alpha C_{\alpha }\gamma _{1}}{2d^{1+\alpha }}, & \quad 1<\alpha <2.%
\end{cases}%
\end{equation*}

\item For all $h=(\beta _{1},\beta _{2},h_{1},h_{2})\in \mathbb{R}\times
\mathbb{R}\times X^{k+\log }$ $(\alpha =1)$ or $h=(\beta _{1},\beta
_{2},h_{1},h_{2})\in \mathbb{R}\times \mathbb{R}\times X^{k+\alpha -1}$ $%
(1<\alpha <2)$ with $h_{i}(x)=\sum_{j=1}^{\infty }a_{n}^{i}\sin (jx),$ one
has:\newline
For $\alpha =1,$
\begin{align*}
D_{g}G^{\alpha }(0,g_{0})h(x)=& {\beta _{1}}%
\begin{pmatrix}
1 \\
1%
\end{pmatrix}%
\sin (x)-\beta _{2}%
\begin{pmatrix}
\frac{1}{2d^{2}} \\
0%
\end{pmatrix}%
\sin (x) \\
& \qquad \qquad -2Ud^{2}\sum_{j=2}^{\infty }\,j\sigma _{j}%
\begin{pmatrix}
a_{j}^{1} \\
a_{j}^{2}%
\end{pmatrix}%
\sin (jx);
\end{align*}%
For $1<\alpha <2,$
\begin{align*}
D_{g}G^{\alpha }(0,g_{0})h(x)=& {\beta _{1}}%
\begin{pmatrix}
1 \\
1%
\end{pmatrix}%
\sin (x)-\beta _{2}%
\begin{pmatrix}
\frac{\alpha C_{\alpha }}{2d^{1+\alpha }} \\
0%
\end{pmatrix}%
\sin (x) \\
& \qquad \qquad -\frac{2U d^{1+\alpha }}{\alpha C_{\alpha }}%
\sum_{j=2}^{\infty }\,j\sigma _{j}%
\begin{pmatrix}
a_{j}^{1} \\
a_{j}^{2}%
\end{pmatrix}%
\sin (jx),
\end{align*}%
where
\begin{equation*}
g\triangleq (U,\gamma _{2},p_{1},p_{2})\text{ \ and \ }g_{0}\triangleq
(U_{\alpha }^{\ast },\gamma _{1},0,0),
\end{equation*}%
and $\sigma _{j}$ is defined by
\begin{equation*}
\sigma _{j}:=%
\begin{cases}
2^{\alpha }\frac{\Gamma (1-\alpha )}{\Gamma (\frac{\alpha }{2})\Gamma (1-%
\frac{\alpha }{2})}\left( \frac{\Gamma (\frac{\alpha }{2})}{\Gamma (1-\frac{%
\alpha }{2})}-\frac{\Gamma (j+\frac{\alpha }{2})}{\Gamma (j+1-\frac{\alpha }{%
2})}\right) , & \text{for}\ \alpha \neq 1, \\
\sum\limits_{i=1}^{j}\frac{8}{2i-1} & \text{for}\ \alpha =1.
\end{cases}%
\end{equation*}

\item The linearized operator
\begin{equation*}
D_{g}G^{\alpha }(0,U^*_\alpha,\gamma _{1},0,0):%
\begin{cases}
\mathbb{R}\times \mathbb{R}\times X^{k+\log }\rightarrow Y^{k-1}, & \quad
\alpha =1, \\
\mathbb{R}\times \mathbb{R}\times X^{k+\alpha -1}\rightarrow Y^{k-1}, &
\quad 1<\alpha <2,%
\end{cases}%
\end{equation*}%
is an isomorphism
\end{enumerate}
\end{lemma}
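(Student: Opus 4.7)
The overall strategy is to exploit the close structural similarity between the traveling functional $G^\alpha_i = G_{i1}+G_{i2}+G_{i3}$ and the co-rotating functional $F^\alpha_i = F_{i1}+F_{i2}+F_{i3}$ analyzed in Section~3: the displayed expressions for $G_{i2}$ and $G_{i3}$ literally coincide with those for $F_{i2}$ and $F_{i3}$ in \eqref{2-2}--\eqref{2-3}, so only the leading term differs. Here $G_{i1}$ depends linearly on $U$ and rationally on $p_i,p_i'$, with denominator $1+\varepsilon|\varepsilon|^\alpha b_i^{1+\alpha}p_i(x)$ bounded away from zero on $B_r$ for small $r$; it is therefore smooth in all its arguments. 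This observation reduces part~(1) to a citation: the $C^1$ regularity of $G_{i2}$ and $G_{i3}$ is inherited verbatim from Lemmas~\ref{lem2-2}--\ref{lem2-3} for $\alpha=1$ and Lemmas~\ref{lem2-7}--\ref{lem2-8b} for $1<\alpha<2$, while $G_{i1}$ is smooth by inspection.

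For part~(2), I would evaluate each piece at $\varepsilon=0$ and $p_1=p_2=0$. The term $G_{i1}$ collapses to $-U\sin(x)$, the term $G_{i2}$ vanishes identically (as in \eqref{2-8} and \eqref{2-21}), and $G_{i3}$ is computed through the Taylor expansion \eqref{2-5} exactly as in the derivation of \eqref{fi3eps}, yielding $G_{i3}(0,U,\gamma_2,0,0)=\tfrac{\alpha C_\alpha \gamma_{3-i}}{2d^{1+\alpha}}\sin(x)$ for $1<\alpha<2$ (and the obvious $\alpha=1$ analog). Setting $G^\alpha_1=G^\alpha_2=0$ then gives the pair $U=\tfrac{\alpha C_\alpha \gamma_2}{2d^{1+\alpha}}$ and $U=\tfrac{\alpha C_\alpha \gamma_1}{2d^{1+\alpha}}$, whose compatibility forces $\gamma_2=\gamma_1$ and $U=U^*_\alpha$. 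Re-expressed in terms of the initial datum \eqref{initial}, this is exactly the classical translating dipole.

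For parts~(3) and~(4), the key point is that $G_{i1}$ carries no $p_j$-dependence at $\varepsilon=0$, so the Gateaux derivatives $\partial_{p_j}G^\alpha_i(0,g_0)$ reduce to the co-rotating computations of Lemmas~\ref{lem2-4} and~\ref{lem2-9b} and contribute the diagonal spectrum $-\gamma_i \sigma_j j$ on the $\sin(jx)$ modes for $j\geq 2$. Differentiating $G_{i1}$ in $U$ at $g_0$ produces $-\sin(x)$ in each component (accounting for the $\beta_1(1,1)^T\sin(x)$ piece), while differentiating in $\gamma_2$ only affects $G_{i3}$ and only for $i=1$ (since the circulation prefactor there is $\gamma_{3-i}$, which equals $\gamma_1$ when $i=2$), producing the contribution $(-\tfrac{\alpha C_\alpha}{2d^{1+\alpha}},\,0)^T\sin(x)$. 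Assembling these gives the announced formula for $D_g G^\alpha(0,g_0)h(x)$, and the $j\geq 2$ Fourier modes invert directly because $\sigma_j>0$ is strictly increasing. The main obstacle is invertibility at the first Fourier mode: in the co-rotating situation of Lemmas~\ref{lem2-4}/\ref{lem2-9b}, the analogous $2\times 2$ determinant was proportional to $\gamma_1+\gamma_2$ and therefore forced $\gamma_1+\gamma_2\neq 0$; here, because the bifurcation parameters are $U$ and $\gamma_2$ rather than $(\Omega,\bar{x})$, the determinant works out to the nonzero constant $\tfrac{\alpha C_\alpha}{2d^{1+\alpha}}$ (respectively $\tfrac{1}{2d^2}$ for $\alpha=1$). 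This unconditional non-degeneracy is precisely what allows $\gamma_1$ to remain a free parameter in the traveling statement of Theorem~\ref{thm:informal}, and is the principal structural difference from the co-rotating case worth emphasizing.
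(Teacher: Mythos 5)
Your proposal is correct and follows essentially the same route as the paper: it reuses the co-rotating analysis for $G_{i2}$ and $G_{i3}$ (which coincide with $F_{i2}$, $F_{i3}$), treats the smooth term $G_{i1}$ by inspection, derives the point-vortex relations $U=\frac{\alpha C_{\alpha}\gamma_{2}}{2d^{1+\alpha}}=\frac{\alpha C_{\alpha}\gamma_{1}}{2d^{1+\alpha}}$, and assembles the linearization with the $2\times2$ determinant $\frac{\alpha C_{\alpha}}{2d^{1+\alpha}}$ exactly as in \eqref{g}--\eqref{uderiv}. The only (inessential) discrepancy is the sign of $\partial_{U}G_{i1}$, where the paper's own sketch is internally inconsistent between \eqref{g} and \eqref{uderiv}; this does not affect the non-vanishing of the determinant or the isomorphism conclusion.
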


\begin{proof}
The proof of this lemma follows the same steps as in the asymmetric
co-rotating case. Hence, we only sketch its proof. We can verify that
\begin{align}
G_{i}^{\alpha }(0,U,\gamma _{2},p_{1},p_{2})& =C_{\alpha }\gamma _{i}\int
\!\!\!\!\!\!\!\!\!\;{}-{}\frac{p_{i}(x-y)\sin (y)dy}{\left( 4\sin ^{2}(\frac{%
y}{2})\right) ^{\frac{\alpha }{2}}}-C_{\alpha }\gamma _{i}\int
\!\!\!\!\!\!\!\!\!\;{}-{}\frac{(p_{i}^{\prime }(x)-p_{i}^{\prime }(x-y))\cos
(y)dy}{\left( 4\sin ^{2}(\frac{y}{2})\right) ^{\frac{\alpha }{2}}}  \notag \\
& \qquad -U\sin (x)+\frac{\alpha C_{\alpha }\gamma _{3-i}\sin (x)}{%
2d^{1+\alpha }}.  \label{g}
\end{align}%
Similarly to Lemma \ref{lem2-8b}, it follows that
\begin{equation*}
\begin{split}
    \partial _{p_{i}}G_{i}^{\alpha }(0,U ,\gamma_2,0,0)h_{i}(x)&=C_{\alpha }(1-%
\frac{\alpha }{2})\gamma _{i}\int \!\!\!\!\!\!\!\!\!\;{}-{}\frac{%
h_{i}(x-y)\sin (y)dy}{\left( 4\sin ^{2}(\frac{y}{2})\right) ^{\frac{\alpha }{%
2}}}\\
&\qquad\qquad \qquad\qquad-C_{\alpha }\gamma _{i}\int \!\!\!\!\!\!\!\!\!\;{}-{}\frac{%
(h_{i}^{\prime }(x)-h_{i}^{\prime }(x-y))\cos (y)dy}{\left( 4\sin ^{2}(\frac{%
y}{2})\right) ^{\frac{\alpha }{2}}},
\end{split}
\end{equation*}%
and
\begin{equation*}
\partial _{p_{3-i}}G_{i}^{\alpha }(0,U,\gamma _{2},0,0)h_{3-i}(x)=0.
\end{equation*}%
Since $G^{\alpha }=(G_{1}^{\alpha },G_{2}^{\alpha }),$ then
\begin{equation*}
D_{p}G^{\alpha }(0,U,\gamma _{2},0,0)h(x)=-\frac{2Ud^{1+\alpha }}{%
\alpha C_{\alpha }}\sum_{j=2}^{\infty }\,j\sigma _{j}%
\begin{pmatrix}
a_{j}^{1} \\
a_{j}^{2}%
\end{pmatrix}%
\sin (jx).
\end{equation*}%
Differentiating \eqref{g} with respect to $U$ at the point $(U_{\alpha
}^{\ast },\gamma _{1})$ yields
\begin{equation*}
\partial _{U}G_{i}^{\alpha }(0,U_{\alpha }^{\ast },\gamma
_{1},p_{1},p_{2})=\sin (x).
\end{equation*}%
Moreover, differentiating \eqref{g} with respect to $\gamma _{2}$ at $%
(U_{\alpha }^{\ast },\gamma _{1}),$ we get
\begin{equation*}
\partial _{\gamma _{2}}G_{i}^{\alpha }(0,U_{\alpha }^{\ast },\gamma
_{1},p_{1},p_{2})=-\frac{\alpha C_{\alpha }\sin (x)}{2d^{1+\alpha }}.
\end{equation*}%
Hence, the differential of $G^{\alpha }$ with respect to the variables $%
(U,\gamma _{2})$ is given by
\begin{equation}
D_{(U,\gamma _{2})}G^{\alpha }(0,U,\gamma _{2},p_{1},p_{2})(\beta _{1},\beta
_{2})=\beta _{1}%
\begin{pmatrix}
1 \\
1%
\end{pmatrix}%
\sin (x)+\beta _{2}%
\begin{pmatrix}
-\frac{\alpha C_{\alpha }}{2d^{1+\alpha }} \\
0%
\end{pmatrix}%
\sin (x).  \label{uderiv}
\end{equation}%
Thus, we can guarantee that the linearization $D_{g}G^{\alpha }(0,g_{0})$ is
invertible for all $\gamma _{1}\in \mathbb{R}$ if and only if the
determinant of the two vectors in (\ref{uderiv}) is not zero. In fact its
determinant is given by $\frac{\alpha C_{\alpha }}{2d^{1+\alpha }}$ which is
non-vanishing.
\end{proof}

\begin{theorem}[Case $1\leq \protect\alpha <2$]
\label{trans} The following assertions hold true.

\begin{enumerate}
\item There exist $\varepsilon _{0}>0$ and a unique $C^{1}$ function $%
g\triangleq (U,\gamma _{2},p_{1},p_{2}):[-\varepsilon _{0},\varepsilon
_{0}]\longrightarrow \mathbb{R}\times \mathbb{R}\times B_{r}$ such that

\begin{equation*}
G^{\alpha }\Big(\varepsilon ,U(\varepsilon ),\gamma _{2}(\varepsilon
),p_{1}(\varepsilon ),p_{2}(\varepsilon )\Big)=0.
\end{equation*}%
Moreover,
\begin{equation*}
\Big(U(0),\gamma _{2}(0),p_{1}(0),p_{2}(0)\Big)=\Big(U_{\alpha }^{\ast
},\gamma _{1},0,0\Big),
\end{equation*}%
where $U_{\alpha }^{\ast }$ is given by
\begin{equation*}
U_{\alpha }^{\ast }:=%
\begin{cases}
\frac{\gamma _{1}}{2d^{2}}, & \quad \alpha =1, \\
\frac{\alpha C_{\alpha }\gamma _{1}}{2d^{1+\alpha }}, & \quad 1<\alpha <2.%
\end{cases}%
\end{equation*}%
In other words, the solution passes through the origin.

\item For all $\varepsilon \in \lbrack -\varepsilon _{0},\varepsilon
_{0}]\backslash \{0\},$ one has
\begin{equation*}
\big(p_{1}(\varepsilon ),p_{2}(\varepsilon )\big)\neq (0,0).
\end{equation*}

\item If $(\varepsilon ,p_{1},p_{2})$ is a solution, then $(-\varepsilon ,%
\tilde{p}_{1},\tilde{p}_{2})$ is also a solution, where
\begin{equation*}
\tilde{p}_{i}(x)=p_{i}(-x),\quad i=1,2.
\end{equation*}

\item For all $\varepsilon \in [-\varepsilon_0, \varepsilon]\backslash\{0\}$%
, the set of solutions $R_i(x)$ parameterizes convex patches at least of
class $C^1$.
\end{enumerate}
\end{theorem}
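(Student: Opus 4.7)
The plan is to mirror the proofs of Theorems \ref{main} and \ref{mainb}, which reduce the construction of traveling asymmetric patch pairs to an application of the implicit function theorem together with a post-hoc verification that the bifurcating solutions are non-trivial, symmetric, and convex. The groundwork for part (1) is essentially contained in Lemma \ref{prop12}: that lemma asserts that $G^{\alpha}$ extends to a $C^{1}$ map from $(-\tfrac{1}{2},\tfrac{1}{2})\times\mathbb{R}\times\mathbb{R}\times B_{r}$ into $Y^{k-1}$, identifies the point $(0,U_{\alpha}^{\ast},\gamma_{1},0,0)$ as a solution of $G^{\alpha}=0$ via the point-vortex balance, and shows that the linearization $D_{g}G^{\alpha}(0,g_{0})$ is an isomorphism from $\mathbb{R}\times\mathbb{R}\times X^{k+\log}$ (or $\mathbb{R}\times\mathbb{R}\times X^{k+\alpha-1}$) onto $Y^{k-1}$. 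The implicit function theorem then yields $\varepsilon_{0}>0$ and a unique $C^{1}$ curve $\varepsilon\mapsto(U(\varepsilon),\gamma_{2}(\varepsilon),p_{1}(\varepsilon),p_{2}(\varepsilon))$ through $(U_{\alpha}^{\ast},\gamma_{1},0,0)$ along which $G^{\alpha}$ vanishes; here $\gamma_{2}$ plays the role that $\bar{x}$ played in the co-rotating case, adjusting the first Fourier coefficient of $G^{\alpha}$.

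For part (2), the argument is the same desingularization computation used in Theorems \ref{main}(2) and \ref{mainb}(2): setting $p_{1}=p_{2}=0$, the terms $G_{i1}$ and $G_{i2}$ reduce to explicit expressions in $\sin(x)$, while an application of the Taylor formula \eqref{2-5} to $G_{i3}$ (following the expansion of $\mathcal{R}_{3}$ in the proof of Theorem \ref{main}(2)) produces a nontrivial contribution of order $\varepsilon\bar{C}\sin(2x)+o(\varepsilon)$ with $\bar{C}\neq 0$. Since no choice of $(U,\gamma_{2})$ can absorb a $\sin(2x)$ harmonic, we conclude $G^{\alpha}(\varepsilon,U,\gamma_{2},0,0)\neq 0$ for every sufficiently small $\varepsilon\neq 0$, so that the solution furnished by the implicit function theorem must satisfy $(p_{1}(\varepsilon),p_{2}(\varepsilon))\neq(0,0)$.

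For part (3), I would verify directly from the explicit formulas for $G_{i1},G_{i2},G_{i3}$ the pointwise identity
\begin{equation*}
G_{i}^{\alpha}(\varepsilon,U,\gamma_{2},p_{1},p_{2})(-x)=-G_{i}^{\alpha}(-\varepsilon,U,\gamma_{2},\tilde{p}_{1},\tilde{p}_{2})(x),\qquad \tilde{p}_{i}(x):=p_{i}(-x),
\end{equation*}
obtained by the substitution $y\mapsto-y$ in every integral and by using that the $p_{i}$ are even. In particular, the choice of $(U(\varepsilon),\gamma_{2}(\varepsilon))$ at which the first Fourier coefficient of $G^{\alpha}$ vanishes is invariant under $(\varepsilon,p_{i})\mapsto(-\varepsilon,\tilde{p}_{i})$, whence uniqueness in the implicit function theorem forces $(U(-\varepsilon),\gamma_{2}(-\varepsilon))=(U(\varepsilon),\gamma_{2}(\varepsilon))$ and $p_{i}(-\varepsilon)=\tilde{p}_{i}(\varepsilon)$. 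Part (4) is a direct computation: parametrizing $z_{i}(x)=R_{i}(x)(\cos x,\sin x)$ with $R_{i}=1+\varepsilon^{1+\alpha}b_{i}^{1+\alpha}p_{i}$, the signed curvature takes the schematic form
\begin{equation*}
\kappa_{i}(x)=\frac{1+O(\varepsilon)}{1+O(\varepsilon)}>0
\end{equation*}
for $\varepsilon\in(-\varepsilon_{0},\varepsilon_{0})$ with $\varepsilon_{0}$ small enough, as already exploited in the proofs of Theorems \ref{main}(4) and \ref{mainb}(4).

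The main obstacle, as in the co-rotating setting, will be ensuring that the Taylor-expanded remainder from $G_{i3}$ in part (2) produces a genuinely nonzero harmonic component other than $\sin(x)$; this requires carefully tracking the cancellations among the terms $G_{i31},\ldots,G_{i34}$ when $p_{1}=p_{2}=0$. Once that algebraic coefficient is shown to be nonzero, the remainder of the argument is a straightforward transcription of the co-rotating proofs, since the regularity and linearization results in Lemma \ref{prop12} already encapsulate all of the delicate singular-integral analysis needed for the range $\alpha\in[1,2)$.
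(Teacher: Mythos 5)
Your proposal is correct and follows essentially the same route as the paper: part (1) via Lemma \ref{prop12} and the implicit function theorem, part (2) via the Taylor expansion of $\mathcal{R}_{3}$ producing an unabsorbable $\varepsilon\bar{C}\sin(2x)$ harmonic, part (3) via the substitution $y\mapsto-y$ and evenness of the $p_{i}$, and part (4) via the signed-curvature computation. The paper's own proof is likewise a sketch that defers to Theorems \ref{main} and \ref{mainb}, so no further comparison is needed.
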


\begin{proof}
The proof follows the same steps of the proofs of Theorems \ref{main} and %
\ref{mainb}. Then, we only sketch it.\newline

\textbf{(1)} Notice that $G_{i2}(\varepsilon ,g)+G_{i3}(\varepsilon ,g)$ is
equal to $F_{i2}(\varepsilon ,g)+F_{i3}(\varepsilon ,g)$ which are given in %
\eqref{2-2} and \eqref{2-3}. Following the same argument used in Theorem \ref%
{main}, we can guarantee that $G^{\alpha }:(-\frac{1}{2},\frac{1}{2})\times
\mathbb{R}\times \mathbb{R}\times B_{r}\rightarrow {Y^{k-1}}$ is a $C^{1}$
function and, by Lemma \ref{prop12}, the linearization $D_{g}G\big(0,g_{0}%
\big)$ is an isomorphism from $\mathbb{R}\times \mathbb{R}\times X^{k+\log }$
$(\alpha =1)$, or $\mathbb{R}\times \mathbb{R}\times X^{k+\alpha -1}$ $%
(1<\alpha <2),$ to $Y^{k-1}$. Now, we can use the implicit function theorem
and obtain that there exist $\varepsilon _{0}>0$ and a unique $C^{1}$
parametrization given by $g=(U,\gamma _{2},p_{1},p_{2}):[-\varepsilon
_{0},\varepsilon _{0}]\rightarrow B_{r}$ such that the set of solutions
\begin{equation*}
G^{\alpha }\big(\varepsilon ,U(\varepsilon ),\gamma _{2}(\varepsilon
),p_{1}(\varepsilon ),p_{2}(\varepsilon )\big)=0
\end{equation*}%
is not empty. Moreover, we can set $\varepsilon =0$ such that the solution
passes through the origin, that is,
\begin{equation*}
\big(U,\gamma _{2},p_{1},p_{2}\big)(0)=(U_{\alpha }^{\ast },\gamma _{1},0,0),
\end{equation*}%
where
\begin{equation*}
U_{\alpha }^{\ast }:=%
\begin{cases}
\frac{\gamma _{1}}{2d^{2}}, & \quad \alpha =1, \\
\frac{\alpha C_{\alpha }\gamma _{1}}{2d^{1+\alpha }}, & \quad 1<\alpha <2.%
\end{cases}%
\end{equation*}%
\newline
\newline
\textbf{(2)} We can show that $(p_{1}(\varepsilon ),p_{2}(\varepsilon ))\neq
(0,0),$ for each $\varepsilon \in \lbrack -\varepsilon _{0},\varepsilon
_{0}]\setminus \{0\}.$ By applying the Taylor formula in $\mathcal{R}_{3}$,
it follows that
\begin{equation*}
\mathcal{R}_{3}(\varepsilon ,p_{1},p_{2})=\varepsilon \bar{C}\sin
(2x)+o(\varepsilon ),
\end{equation*}%
where $\bar{C}$ is a positive constant that depends only on $\gamma _{3-i}$,
$\alpha $, $d$ and $b_{i}$. Now, by using \eqref{2-20} and \eqref{2-21}, we
arrive at
\begin{equation*}
G_{i1}(\varepsilon,U ,\gamma_2,0,0)=-U\sin (x)\ \ \ \ \ \mbox{and}\ \ \ \
G_{i2}(\varepsilon,U,\gamma_2 ,0,0)=0.
\end{equation*}%
With the above arguments, we can conclude that
\begin{equation*}
G^{\alpha }(\varepsilon ,U,\gamma_2,0,0)\neq 0,\ \ \ \ \ \forall
\,\varepsilon \in \lbrack -\varepsilon _{0},\varepsilon _{0}]\setminus \{0\},
\end{equation*}%
with $\varepsilon _{0}$ chosen small enough. Consequently, for $\varepsilon
\neq 0$ small enough, we have that $G_{i}^{\alpha }$ is not equal to zero.%
\newline
\newline
\textbf{(3)} This step follows by a simple change of variables. In fact, for
$\tilde{p}_{i}(x)=p_{i}(-x)$, it is not difficult to verify that $%
(-\varepsilon ,\tilde{p}_{1},\tilde{p}_{2})$ is a solution for $%
G_{i}^{\alpha }(\varepsilon ,p_{1},p_{2})=0$ if so does $(\varepsilon
,p_{1},p_{2}).$\newline
\newline
\textbf{(4)} As in the co-rotating case, we only need to compute the signed
curvature $\kappa _{i}$ of the interface of the patch, for $i=1,2$. Thus, we
conclude the proof.
\end{proof}


\



\phantom{s} \thispagestyle{empty}

\

\noindent\textsc{Edison Cuba}\newline
Department of Mathematics, State University of Campinas, \newline
13083-859, Rua S\'{e}rgio Buarque de Holanda, 651, Campinas-SP, Brazil%
\newline
\noindent\texttt{ecubah@ime.unicamp.br}

\vspace{0.5cm}

\noindent\textsc{Lucas C. F. Ferreira (Corresponding Author)}\newline
Department of Mathematics, State University of Campinas, \newline
13083-859, Rua S\'{e}rgio Buarque de Holanda, 651, Campinas-SP, Brazil%
\newline
\noindent\texttt{lcff@ime.unicamp.br}

\end{document}